\DeclareMathAlphabet{\mathbfsl}{OT1}{ppl}{b}{it} %{OT1}{cmr}{bx}{it}
\newcommand{\Strut}[2]{\rule[-#2]{0cm}{#1}}
\newcommand{\uuu}{\mathbfsl{u}}
\newcommand{\vvv}{\mathbfsl{v}}
\newcommand{\deff}{\mbox{$\stackrel{\rm def}{=}$}}
\newcommand{\Span}[1]{{\left\langle {#1} \right\rangle}}
\newcommand{\sbinom}[2]{\left[ \begin{array}{c} #1 \\ #2 \end{array} \right] }
\newcommand{\sbinomq}[2]{\begin{scriptsize} \sbinom{#1}{#2}_q \end{scriptsize}}
\newcommand{\sbinomtwo}[2]{\begin{scriptsize} \sbinom{#1}{#2}_2 \end{scriptsize}}
\newcommand{\field}[1]{\mathbb{#1}}
\newcommand{\Z}{\field{Z}}
\newcommand{\N}{\field{N}}
\newcommand{\F}{\field{F}}
\newcommand{\dP}{\field{P}}
\newcommand{\dS}{\field{S}}
\newcommand{\T}{\field{T}}
\newcommand{\V}{\field{V}}
\newcommand{\cC}{{\cal C}}
\newcommand{\cG}{{\cal G}}
\newcommand{\B}{{\mathbb B}}
\newcommand{\C}{{\mathbb C}}
\newcommand{\CMRD}{\C^{\textmd{MRD}}}
\newtheorem{theorem}{Theorem}
\newtheorem{lemma}{Lemma}
\newtheorem{remark}{Remark}
\newtheorem{const}{Construction}
\newtheorem{cor}{Corollary}
\newtheorem{example}{Example}
\begin{document}

\bibliographystyle{plain}

\title{
\begin{center}
Covering of Subspaces by Subspaces
\end{center}
}
\author{
{\sc Tuvi Etzion}\thanks{Department of Computer Science, Technion,
Haifa 32000, Israel, e-mail: {\tt etzion@cs.technion.ac.il}.}}

\maketitle

\begin{abstract}
Lower and upper bounds on the size of a covering of subspaces in
the Grassmann graph $\cG_q(n,r)$ by subspaces from the Grassmann
graph $\cG_q(n,k)$, $k \geq r$, are discussed. The problem is of interest from
four points of view: coding theory, combinatorial designs,
$q$-analogs, and projective geometry.
In particular we examine
coverings based on lifted maximum rank distance codes, combined with
spreads and a recursive construction. New constructions are
given for $q=2$ with $r=2$ or $r=3$.
We discuss the density for some of these coverings.
Tables for the best known coverings, for
$q=2$ and $5 \leq n \leq 10$, are presented.
We present some questions concerning possible constructions
of new coverings of smaller size.
\end{abstract}

\vspace{0.5cm}

\noindent {\bf Keywords:} covering designs, lifted MRD codes, projective geometry,
$q$-analog, spreads, subspace transversal design.

\footnotetext[1] { This research was supported in part by the Israeli
Science Foundation (ISF), Jerusalem, Israel, under
Grant 10/12.}

%%%%%%%%%%%%%%%%%%%%%%%%%%%%%%%%%%%%%%%%%%%%%%%%%%%%%%%%%%%%%%%%%%%%%%
%%%%%%%%%%%%%%%%%%%%%%%%%%%%%%%%%%%%%%%%%%%%%%%%%%%%%%%%%%%%%%%%%%%%%%
%%%%%%%%%%%%%%%%%%%%%%%%%%%%%%%%%%%%%%%%%%%%%%%%%%%%%%%%%%%%%%%%%%%%%%
\newpage
\section{Introduction}

Let $\F_q$ be a finite field with $q$ elements. For given integers
$n \geq k \geq 0$, let $\cG_q(n,k)$ denote the set of all
$k$-dimensional subspaces of $\F_q^n$. $\cG_q(n,k)$ is often
referred to as Grassmannian. It is well known that
$$ \begin{small}
| \cG_q (n,k) | = \sbinomq{n}{k}
\deff \frac{(q^n-1)(q^{n-1}-1) \cdots
(q^{n-k+1}-1)}{(q^k-1)(q^{k-1}-1) \cdots (q-1)}
\end{small}
$$
where $\sbinomq{n}{k}$ is the $q-$\emph{ary Gaussian
coefficient}~\cite{vLWi}.

A {\it code} $\C$ over the Grassmannian is a subset of
$\cG_q(n,k)$. In recent years there has been an increasing
interest in codes over the Grassmannian as a result of their
application to error-correction in random network coding as was
demonstrated by Koetter and Kschischang~\cite{KoKs}. But, the
interest in these codes has been also before this application,
since these codes are $q$-analogs of constant weight codes. The
well-known concept of $q$-analogs replaces subsets by subspaces of
a vector space over a finite field and their orders by the
dimensions of the subspaces. In particular, the $q \text{-analog}$
of a constant weight code in the Johnson space is a constant
dimension code in the Grassmannian space. $q$-analogs of various
combinatorial objects are well known~\cite[pp. 325-332]{vLWi}.
Design theory is a well studied area in combinatorics related to
coding theory. Related to constant dimension codes are $q$-analogs
of block designs. $q$-analogs of $t$-designs were studied in
various papers and
connections~\cite{BKL,Ito,MMY,ScEt,Suz1,Suz2,Tho1,Tho2}. Only some
of these designs had also some interest in coding
theory~\cite{AAK,ScEt}. These designs are known as Steiner
structures which are $q$-analogs of Steiner systems.

A {\it Steiner structure} $\dS_q(r,k,n)$ is a collection $\dS$ of
elements from $\cG_q(n,k)$ such that each element from
$\cG_q(n,r)$ is contained in exactly one element of $\dS$.

The condition that ``each element from $\cG_q(n,r)$ is contained in
exactly one element of~$\dS$" can be relaxed. If ``each element
from $\cG_q(n,r)$ is contained in at most one element of~$\dS$"
then the structure is a $q$-packing design better known as a
constant dimension code or a Grassmannian code. These codes were
considered in many papers in the last five years,
e.g.~\cite{EtSi09,EtSi11,EtVa09,KoKs,KoKu08,SKK08}.
%If ``each
%element from $\cG_q(n,r)$ is contained in at least one element of
%$\dS$" then the structure is a $q$-covering design.

A {\it $q$-covering design} $\C_q(n,k,r)$ is a collection $\dS$ of
elements from $\cG_q(n,k)$ such that each element of $\cG_q(n,r)$
is contained in at least one element of $\dS$.
%We omit the $q$ and call the structure, a covering design, if
%a specific $q$ is considered and its value is well
%understood from the context.

Let $\cC_q(n,k,r)$ denote the minimum number of subspaces in a
$q$-covering design $\C_q(n,k,r)$. Lower and upper bounds on
$\cC_q(n,k,r)$ were considered in~\cite{EtVa11a}. Lower bounds are
obtained by analytical methods, and upper bounds are obtained by
constructions of the related $q$-covering designs.

Grassmannian codes and $q$-covering designs are also of interest
in the context of projective geometry. A $k$-spread in PG($n,q$) is
a $q$-covering design $\C_q(n+1,k+1,1)$. Similarly, the values of
$\cC_q(n,n-1,r)$ and $\cC_q(n,n-2,r)$, as well as some related values,
were studied in the context of projective
geometry~\cite{Beu74,Beu79,BeUe91,BoBu66,EiMe97,Lun99,Met03}.
The related structure in projective geometry is a dual structure
to a $q$-covering design and it is called a {\it blocking set}.
A set $\T$ of $t$-subspaces in PG($n,q$) such that every $s$-subspace
is incident with at least one element of $\T$ is called a blocking set.
Such a design is a $q$-analog of the well-known {\it Tur\'{a}n design}~\cite{Caen83,Caen91}. The
dual subspaces of the blocking set form a $q$-covering design $\C_q(n+1,n-t,n-s)$.
Blocking sets were considered for example in~\cite{Met03}.
We note that there is a difference of one in the dimension of
subspaces in the Grassmannian and the dimension of the same subspace the projective geometry.
In other words, $r$-subspaces in PG($n,q$) are $(r+1)$-dimensional subspaces in $\F_q^{n+1}$.

In this paper we consider upper bounds on $\cC_q(n,k,r)$ based
mainly on lifting of maximum rank distance codes combined with
other combinatorial structures. The rest of this paper is
organized as follows. In Section~\ref{sec:bounds} we discuss the
the known bounds on $\cC_q(n,k,r)$ and their implications on the
behavior of the value $\cC_q(n,k,r)$. In
Section~\ref{sec:known_concepts} we introduce the ingredients for
our constructions, lifting of maximum rank distance codes,
subspace transversal designs, and partitions of $\cG_2(4,2)$ into
spreads. In Sections~\ref{sec:bnk2} and~\ref{sec:bnk3} we present
bounds on $\cC_2(n,k,2)$ and $\cC_2(n,k,3)$ and discuss the
density of the obtained $q$-covering designs compared to the
well known covering bound. In Section~\ref{sec:tables} we present
tables of the currently known bounds on $\cC_2(n,k,r)$ for $5 \leq n \leq 10$. In
Section~\ref{sec:conclusion} we present a sequence of problems for
further research. The problems suggest various construction
methods for $q$-covering designs. Before we proceed to the results
of this paper we want to make a small important remark. Although
we will assume throughout that the ambient space is $\F_q^n$, we
point out that our results hold for an arbitrary $n$-dimensional
vector space over $\F_q$.

\section{Known Bounds}
\label{sec:bounds}

In this section we present known bounds on $\cC_q(n,k,r)$.
Most of these bounds will be used later to obtain specific bounds,
mainly because of the recursive nature of the new bounds,
or since the known bounds can be
used as initial conditions for the recursive equations. Even so,
our paper is devoted to new upper bounds on $\cC_q(n,k,r)$, we
will consider also the lower bounds as we can use the lower bounds to
examine how good are the upper bounds. The first
bound is the $q$-analog Sch\"{o}nheim bound~\cite{Sch64} given
in~\cite{EtVa11a}. This is the lower bound which will be frequently used
in our tables.

\begin{theorem}
\label{thm:schonheim} $\cC_q(n,k,r) \geq \left\lceil
\frac{q^n-1}{q^k-1} \cC_q(n-1,k-1,r-1) \right\rceil$.
\end{theorem}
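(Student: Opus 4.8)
The plan is to imitate the classical Sch\"{o}nheim argument by double counting incidences over one-dimensional subspaces (the points of the projective geometry), with passage to a quotient space playing the role of the ``deletion of a point'' used in the set-system proof. Let $\dS \subseteq \cG_q(n,k)$ be an optimal $q$-covering design, so that $|\dS| = \cC_q(n,k,r)$; the goal is to bound $|\dS|$ from below.

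First I would fix a one-dimensional subspace $P \subseteq \F_q^n$ and restrict attention to the members of $\dS$ that contain $P$. Passing to the quotient $\F_q^n/P \cong \F_q^{n-1}$, the correspondence theorem identifies the subspaces of $\F_q^n$ containing $P$ with the subspaces of the quotient, lowering every dimension by one and preserving inclusions. Under this identification a member $V \in \dS$ with $V \supseteq P$ maps to a $(k-1)$-subspace $V/P$, and an $r$-subspace $W \supseteq P$ maps to an $(r-1)$-subspace $W/P$.

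The key step is to check that $\{V/P : V \in \dS,\ V \supseteq P\}$ is itself a $q$-covering design $\C_q(n-1,k-1,r-1)$ in the quotient. Every $(r-1)$-subspace of $\F_q^n/P$ equals $W/P$ for a unique $r$-subspace $W$ with $W \supseteq P$; since $\dS$ covers $W$, some $V \in \dS$ satisfies $V \supseteq W \supseteq P$, and hence $V/P \supseteq W/P$ in the quotient. Thus every $(r-1)$-subspace of $\F_q^{n-1}$ is covered, so the number of members of $\dS$ through $P$ is at least $\cC_q(n-1,k-1,r-1)$. I expect this reduction, namely verifying that a covering of $\F_q^n$ descends to a covering of the quotient, to be the main conceptual point, although it is routine once the quotient picture is set up.

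Finally I would double count the incident pairs $(P,V)$ with $\dim P = 1$, $V \in \dS$, and $P \subseteq V$. Summing the previous bound over all $\frac{q^n-1}{q-1}$ one-dimensional subspaces shows that the number of such pairs is at least $\frac{q^n-1}{q-1}\,\cC_q(n-1,k-1,r-1)$. Counting the same pairs by members instead, each $V \in \dS$ contains exactly $\frac{q^k-1}{q-1}$ one-dimensional subspaces, so the total is $|\dS|\,\frac{q^k-1}{q-1}$. Combining the two counts and cancelling the common factor $q-1$ gives $|\dS| \geq \frac{q^n-1}{q^k-1}\,\cC_q(n-1,k-1,r-1)$; since $|\dS|$ is an integer, the right-hand side may be replaced by its ceiling, which is exactly the asserted bound.
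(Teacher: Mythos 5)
Your proof is correct: the quotient-space (derived design) argument showing that the members of $\dS$ through a fixed one-dimensional subspace $P$ descend to a $\C_q(n-1,k-1,r-1)$ in $\F_q^n/P$, followed by double counting incidences $(P,V)$ and applying integrality, is exactly the standard derivation of the $q$-analog Sch\"{o}nheim bound. The paper itself states this theorem without proof, citing~\cite{EtVa11a}, and your argument coincides with the proof given there, so nothing further is needed.
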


The next theorem can be obtained by iterating
Theorem~\ref{thm:schonheim} or just by noting that each
$r$-dimensional subspace of $\F_q^n$ must be contained in at least
one element of a $q$-covering design $\C_q(n,k,r)$. This bound is
known as the \emph{covering bound}.

\begin{theorem}
\label{thm:covering_bound} $\cC_q(n,k,r) \geq
\begin{small} \frac{\sbinomq{n}{r}}{\sbinomq{k}{r}} \end{small}$
with equality holds if and
only if a Steiner structure $\dS_q(r,k,n)$ exists.
\end{theorem}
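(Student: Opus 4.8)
The plan is to establish the lower bound by a double-counting (incidence) argument, and then to read off the equality condition directly from the situation in which that counting inequality is tight. The whole argument is elementary; the only part that requires care is the ``if and only if'' for equality.

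First I would set up the incidence count. Let $\dS$ be any $q$-covering design $\C_q(n,k,r)$, and consider the set of incident pairs $(R,K)$ with $R \in \cG_q(n,r)$, $K \in \dS$, and $R \subseteq K$. Counting these pairs by fixing $K$ first: a single $k$-dimensional subspace $K$ contains exactly $\sbinomq{k}{r}$ subspaces of dimension $r$, since the $r$-subspaces of $K$ are precisely the elements of $\cG_q(k,r)$. Summing over the $|\dS|$ members of $\dS$, the total number of incident pairs equals $|\dS| \cdot \sbinomq{k}{r}$.

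Next I would count the same pairs by fixing $R$ first. By the defining property of a $q$-covering design, every $R \in \cG_q(n,r)$ is contained in at least one $K \in \dS$, so each of the $\sbinomq{n}{r}$ subspaces $R$ contributes at least one incident pair. Comparing the two counts yields $|\dS| \cdot \sbinomq{k}{r} \geq \sbinomq{n}{r}$, hence $|\dS| \geq \frac{\sbinomq{n}{r}}{\sbinomq{k}{r}}$. Since this holds for every covering design, it holds in particular for a minimum one, giving the stated bound on $\cC_q(n,k,r)$.

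Finally I would analyze equality, which is the crux. The inequality $|\dS| \cdot \sbinomq{k}{r} \geq \sbinomq{n}{r}$ is tight exactly when each $R \in \cG_q(n,r)$ lies in precisely one $K \in \dS$: the per-$R$ contribution is an integer that is at least one, and its sum is forced to equal $\sbinomq{n}{r}$ only when every contribution is exactly one. That single-coverage condition is precisely the definition of a Steiner structure $\dS_q(r,k,n)$. For the forward direction, if $\cC_q(n,k,r)$ attains the bound, then an optimal covering makes the count tight, forcing single coverage and hence a Steiner structure. Conversely, a Steiner structure is in particular a covering design, and single coverage makes the count equal $\sbinomq{n}{r}$, so it meets the bound. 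This completes both directions.
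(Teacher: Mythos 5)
Your proposal is correct and matches the paper's own justification, which derives the bound ``just by noting that each $r$-dimensional subspace of $\F_q^n$ must be contained in at least one element of a $q$-covering design'' --- i.e., exactly your double count, with the equality case (every $r$-subspace covered exactly once) being the definition of a Steiner structure $\dS_q(r,k,n)$. Your write-up merely makes explicit the incidence-counting details and the two directions of the equality claim that the paper leaves implicit.
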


Another lower bound given in~\cite{EtVa11a} is a $q$-analog of a
theorem given by de Caen in~\cite{Caen83,Caen91}.

\begin{theorem}
\label{thm:bound_k,k-1} $\cC_q(n,k,k-1) \geq
\frac{(q^k-1)(q-1)}{(q^{n-k}-1)^2} \sbinomq{n}{k+1}$.
\end{theorem}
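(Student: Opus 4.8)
The plan is to adapt de Caen's double-counting argument for Tur\'{a}n systems to the $q$-analog, phrasing it as a \emph{forced-overlap} estimate: in the regime where this bound improves on Theorem~\ref{thm:covering_bound} (roughly $n<2k$), no near-perfect packing of the $(k-1)$-subspaces into $k$-subspaces can exist, and the obstruction can be made quantitative. Let $\dS$ be an optimal $q$-covering design $\C_q(n,k,k-1)$, so $N=|\dS|=\cC_q(n,k,k-1)$ and every $(k-1)$-subspace lies in at least one block (a $k$-subspace) of $\dS$. For a $(k-1)$-subspace $U$ write $d(U)=|\{W\in\dS: U\subset W\}|\ge 1$, and for a $(k+1)$-subspace $V$ write $m(V)=|\{W\in\dS: W\subset V\}|$. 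Two elementary flag counts form the backbone: counting incidences from below gives $\sum_U d(U)=N\cdot\frac{q^k-1}{q-1}$ (each block contains $\sbinomq{k}{1}=\frac{q^k-1}{q-1}$ subspaces of dimension $k-1$), and counting from above gives $\sum_V m(V)=N\cdot\frac{q^{n-k}-1}{q-1}$ (each block lies in $\frac{q^{n-k}-1}{q-1}$ subspaces of dimension $k+1$).

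Next I would introduce the local incidence $a(U,V)=|\{W\in\dS: U\subset W\subset V\}|$ for each incident pair $U\subset V$ with $\dim U=k-1$ and $\dim V=k+1$. Working in the quotient $V/U\cong\F_q^2$ shows that only $q+1$ subspaces of dimension $k$ lie strictly between $U$ and $V$, so $0\le a(U,V)\le q+1$. The crux is a covering obstruction inside each $(k+1)$-space $V$: if a $(k-1)$-subspace $U\subset V$ has $a(U,V)=0$ then it is covered by no block contained in $V$, hence by some $W\in\dS$ with $W\not\subset V$; a dimension count then forces $U\subseteq W\cap V\subsetneq W$ with $\dim(W\cap V)=k-1$, so $W\cap V=U$. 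Consequently distinct locally-uncovered subspaces $U$ of $V$ require distinct external blocks $W$. This turns "local coverage failure'' into forced external block-incidences, which is exactly the mechanism that pushes $N$ above the covering bound.

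I would then count $E=|\{(W,V): W\in\dS,\ \dim(W\cap V)=k-1,\ W\not\subset V\}|$ in two ways. The obstruction gives $E\ge\sum_V(\text{number of locally-uncovered }U\subset V)$, which I bound below using $\sum_V m(V)$; from the block side each block contributes a fixed number of such $V$, computed in the quotient $\F_q^n/U$. Assembling the two estimates, the terms $\frac{q^{n-k}-1}{q-1}$ coming from $\sum_V m(V)$ cancel against part of the per-block count, and solving for $N$ produces a bound of the stated shape. Simplifying the Gaussian coefficients via $\sbinomq{n}{k+1}=\sbinomq{n}{k-1}\cdot\frac{(q^{n-k+1}-1)(q^{n-k}-1)}{(q^{k+1}-1)(q^k-1)}$ should then put it into the form of Theorem~\ref{thm:bound_k,k-1}.

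The main obstacle is precisely that a $(k-1)$-subspace of $V$ may be covered from outside $V$, so no purely local (within-$V$) covering bound applies directly; the external-witness count is what circumvents this, and getting the external contributions to telescope cleanly is the technical heart. A secondary delicate point is the sharp constant: the crude union bound "$m(V)$ blocks cover at most $m(V)\cdot\frac{q^k-1}{q-1}$ subspaces of dimension $k-1$ in $V$'' is wasteful exactly when the inside blocks overlap heavily, which is the regime $n<2k$ in which this theorem beats Theorem~\ref{thm:covering_bound}; it yields a comparable but slightly weaker expression. To obtain the denominator $(q^{n-k}-1)^2$ and the exact constant $\frac{(q^k-1)(q-1)}{(q^{n-k}-1)^2}$, I expect one needs a tighter local estimate inside each $V$\,---\,either a Bose--Burton-type bound on line-blocking sets (the minimum number of hyperplanes of $V$ covering all of its codimension-$2$ subspaces), or a Cauchy--Schwarz step exploiting $a(U,V)\le q+1$ together with the second moment $\sum_{U,V}a(U,V)^2$, whose off-diagonal term counts ordered pairs of distinct blocks meeting in a $(k-1)$-subspace.
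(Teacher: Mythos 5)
First, a framing point: this paper does not prove Theorem~\ref{thm:bound_k,k-1} at all --- it is imported from~\cite{EtVa11a} as the $q$-analog of de Caen's theorem, so your attempt must be judged against de Caen's argument rather than anything in this text. Your setup is sound: the two flag counts, the bound $a(U,V)\le q+1$ via $V/U\cong\F_q^2$, and the key witness lemma (if $U\subset V$ has $a(U,V)=0$, any covering block $W$ satisfies $W\cap V=U$, so distinct locally-uncovered $(k-1)$-spaces of $V$ require distinct external blocks) are all correct, and your diagnosis that the theorem only has content for $n<2k$ is accurate.

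The genuine gap is that the double count you describe, when actually carried out, does not ``produce a bound of the stated shape'': it reproduces the covering bound of Theorem~\ref{thm:covering_bound} exactly, with zero gain. Per $(k+1)$-space $V$, the union bound gives $e(V)\ge \sbinomq{k+1}{2}-\frac{q^k-1}{q-1}\,m(V)$; summing over $V$, using $\sum_V m(V)=N\frac{q^{n-k}-1}{q-1}$ and the per-block count $\#\{V:\dim(W\cap V)=k-1\}=\frac{q^k-1}{q-1}\cdot\frac{q^{n-k}-1}{q-1}\cdot\frac{q^2(q^{n-k-1}-1)}{q^2-1}$, the bracket telescopes via $q^2\,\frac{q^{n-k-1}-1}{q-1}+(q+1)=\frac{q^{n-k+1}-1}{q-1}$, and one obtains $N\ge \frac{(q^{k+1}-1)(q-1)}{(q^{n-k}-1)(q^{n-k+1}-1)}\sbinomq{n}{k+1}$. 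By the very identity you quote, $\sbinomq{n}{k+1}=\sbinomq{n}{k-1}\cdot\frac{(q^{n-k+1}-1)(q^{n-k}-1)}{(q^{k+1}-1)(q^k-1)}$, this is literally $\sbinomq{n}{k-1}\big/\sbinomq{k}{1}$ --- the covering bound --- and it is strictly weaker than the claimed constant precisely when $n<2k$, i.e., in the entire regime where Theorem~\ref{thm:bound_k,k-1} says anything beyond Theorem~\ref{thm:covering_bound}. So your characterization of the crude route as ``comparable but slightly weaker'' is an understatement: it is vacuous, and the whole content of the theorem lives in the sharpening you defer. You correctly locate the loss (overlap among the blocks inside $V$) and name plausible repairs (a Bose--Burton-type blocking bound inside $V$, or a Cauchy--Schwarz second-moment step on $a(U,V)$; the latter is indeed in the spirit of de Caen's convexity argument), but you execute neither, and nothing in the proposal shows that either yields the exact constant $\frac{(q^k-1)(q-1)}{(q^{n-k}-1)^2}$. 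As written, the proposal proves only the known covering bound; the de Caen-specific averaging idea that converts block overlap into a genuine improvement is missing.
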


The next theorem given in~\cite{EtVa11a} is used infinitely many times.
\begin{theorem}
\label{thm:lengthening} $\cC_q (n+ 1 , k + 1 , r) \leq \cC_q (n,k,r)$.
\end{theorem}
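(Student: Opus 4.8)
The plan is to prove this inequality by an explicit \emph{lengthening} construction: starting from an optimal $q$-covering design in $\cG_q(n,k)$, I would adjoin one fixed extra coordinate to every block and show that the resulting collection covers $\cG_q(n+1,r)$ without increasing the number of subspaces. Concretely, write $\F_q^{n+1} = \F_q^n \oplus \Span{\be}$, where $\be$ is the $(n+1)$-st standard basis vector (so $\be \notin \F_q^n$), and let $\pi \colon \F_q^{n+1} \to \F_q^n$ be the projection with kernel $\Span{\be}$. Let $\dS \subseteq \cG_q(n,k)$ be a $q$-covering design $\C_q(n,k,r)$ attaining the minimum, so $|\dS| = \cC_q(n,k,r)$. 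For each $V \in \dS$ put $V' \deff V + \Span{\be}$; since the sum is direct, $V' \in \cG_q(n+1,k+1)$, and I set $\dS' \deff \mathset{V' : V \in \dS}$.

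The heart of the argument is to verify that $\dS'$ covers every $r$-dimensional subspace $U$ of $\F_q^{n+1}$. Writing each $u \in U$ as $u = \pi(u) + c\be$ for a scalar $c$, one sees $U \subseteq \pi(U) + \Span{\be}$, so it suffices to find $V \in \dS$ with $\pi(U) \subseteq V$; then $U \subseteq V + \Span{\be} = V'$. The main point is that the way the covering property of $\dS$ is invoked depends on $\dim \pi(U)$, and here the relation $\dim \pi(U) = r - \dim(U \cap \Span{\be})$ does the bookkeeping: if $\be \notin U$ then $\dim \pi(U) = r$, whereas if $\be \in U$ then $\dim \pi(U) = r-1$. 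In the first case $\pi(U)$ is itself an $r$-subspace of $\F_q^n$, hence directly contained in some $V \in \dS$. In the second case $\pi(U)$ is an $(r-1)$-subspace, which I would first enlarge to any $r$-subspace $W \supseteq \pi(U)$ of $\F_q^n$ (possible since $r \leq k \leq n$) and then cover $W$, and hence $\pi(U)$, by a block of $\dS$. In both cases $U \subseteq V'$ for some $V \in \dS$, so $\dS'$ is a valid $q$-covering design $\C_q(n+1,k+1,r)$.

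Finally, since $|\dS'| \leq |\dS| = \cC_q(n,k,r)$, the defining minimality of $\cC_q(n+1,k+1,r)$ immediately yields $\cC_q(n+1,k+1,r) \leq \cC_q(n,k,r)$, as claimed. I expect the only subtlety to be the handling of the ``transversal'' subspaces $U$ not contained in $\F_q^n$ (the case $\be \in U$, where the dimension of the projection drops); the projection lemma above is exactly what tames them. As a side remark the map $V \mapsto V'$ is in fact injective, because $V' \cap \F_q^n = V$, so $|\dS'| = |\dS|$; but only the inequality $|\dS'| \leq |\dS|$ is needed for the bound.
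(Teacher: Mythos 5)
Your proof is correct: the two-case analysis via the projection $\pi$ (using $\dim \pi(U) = r - \dim(U \cap \Span{\be})$, and extending the $(r-1)$-dimensional $\pi(U)$ to an $r$-subspace of $\F_q^n$ when $\be \in U$) is complete, and $|\dS'| \leq |\dS|$ then gives the bound. The paper itself states Theorem~\ref{thm:lengthening} without proof, citing~\cite{EtVa11a}, and your construction --- replacing each block $V$ of an optimal covering design $\C_q(n,k,r)$ by $V \oplus \Span{\be} \in \cG_q(n+1,k+1)$ --- is exactly the standard lengthening argument used there, so the approaches coincide.
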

Theorem~\ref{thm:lengthening} implies a very interesting property on the behavior
of optimal $q$-design coverings.
\begin{cor}
\label{cor:leng} For any given $r >0$ and $\delta >0$ there exists
a constant $c_{q,\delta,r}$ and an integer $n_0$ such that for
each $n > n_0$, $\cC_q(n,n-\delta,r)=c_{q,\delta,r}$.
\end{cor}
\begin{remark}
The value of $c_{q,\delta,r}$ in Corollary~\ref{cor:leng} can be
derived from the results which follow in this section (see Theorem~\ref{thm:n_spreads}).
\end{remark}

The next two theorems~\cite{EtVa11a} can be viewed as
complementary results. They provide two families for which we know
the exact value of $\cC(n,k,r)$.

\begin{theorem}
\label{thm:optc2} If $1 \leq k \leq n$, then
$\cC_q(n,k,1)=\left\lceil \frac{q^n-1}{q^k-1} \right\rceil$.
\end{theorem}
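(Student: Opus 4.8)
The statement has two halves, a lower bound and a matching construction, and I would treat them separately.

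For the lower bound, the plan is simply to specialize the covering bound (Theorem~\ref{thm:covering_bound}) to $r=1$. Since $\sbinomq{n}{1}=\frac{q^n-1}{q-1}$ and $\sbinomq{k}{1}=\frac{q^k-1}{q-1}$, that bound reads $\cC_q(n,k,1)\ge \frac{q^n-1}{q^k-1}$. Because $\cC_q(n,k,1)$ is a number of subspaces it is an integer, so I may round up and conclude $\cC_q(n,k,1)\ge \lceil \frac{q^n-1}{q^k-1}\rceil$. This direction is essentially immediate, and all the work lies in producing a covering of exactly this size.

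In the divisible case $k\mid n$ the construction is a $k$-spread of $\F_q^n$, which exists by the classical Segre result: a partition of the nonzero vectors into $\frac{q^n-1}{q^k-1}$ elements of $\cG_q(n,k)$ meeting pairwise only in $\{0\}$. Every point $\Span{v}$ lies in a unique spread element, so the spread is a covering; here $\frac{q^n-1}{q^k-1}$ is already an integer equal to the ceiling, giving equality, and (again via Theorem~\ref{thm:covering_bound}) a Steiner structure $\dS_q(1,k,n)$. For the general case I would write $n=ak+b$ with $0<b<k$ and first record the arithmetic identity $\lceil \frac{q^n-1}{q^k-1}\rceil = q^b\,\frac{q^{ak}-1}{q^k-1}+1$, obtained from $q^n-1 = q^b(q^{ak}-1)+(q^b-1)$ together with $0<\frac{q^b-1}{q^k-1}<1$; this pinpoints the target size. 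The covering I would then build starts from a $k$-spread of an $ak$-dimensional subspace $A\subseteq\F_q^n$, which already covers every $\Span{v}$ with $v\in A$, and adjoins further elements of $\cG_q(n,k)$, constructed as graphs of linear maps into a complementary $b$-dimensional subspace $B$ in the style of lifted MRD codes, to reach the points with nonzero $A$-component; a single final $k$-subspace containing $B$ sweeps up the remaining low-dimensional flat.

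The hard part will be the overlap bookkeeping in this general case. A direct count shows that once $k>b$ the graph-subspaces over a fixed spread element are forced to intersect nontrivially, so $q^b$ of them per spread element cannot partition the relevant points and a naive per-element allocation overshoots the budget $q^b\frac{q^{ak}-1}{q^k-1}+1$. The genuine obstacle is therefore to organize the extra subspaces \emph{globally} so that the unavoidable overlaps are concentrated and the total is held to exactly the ceiling; equivalently, to show that a suitably chosen maximal partial $k$-spread leaves uncovered only a set of points contained in a single element of $\cG_q(n,k)$, which then completes the cover. Establishing this leftover-structure claim is the crux of the argument, whereas the lower bound and the divisible case are routine by comparison.
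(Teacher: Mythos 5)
The paper itself contains no proof of this theorem---it is quoted from \cite{EtVa11a}, and the surrounding text notes it was first proved, in projective-geometry language, by Beutelspacher \cite{Beu79}---so your proposal has to be judged on its own merits. Your lower bound, the divisible case, and the arithmetic identity $\left\lceil \frac{q^n-1}{q^k-1} \right\rceil = q^b\frac{q^{ak}-1}{q^k-1}+1$ for $n=ak+b$, $0<b<k$, are all correct, and your diagnosis that pure graph-subspaces must overlap (any two lifted maps $\F_q^{k}\to\F_q^{b}$ differ by a map with nontrivial kernel when $k>b$) is also right. But the construction, which is the entire content of the theorem, is left unproven, and the one concrete route you propose for the crux is a dead end: a cover of size $\frac{q^n-q^b}{q^k-1}+1$ consisting of a pairwise-disjoint partial $k$-spread plus a single mop-up subspace would force the partial spread to have size $\frac{q^n-q^b}{q^k-1}$, and such partial spreads do not exist for $b>0$. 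Already for $b=1$, Beutelspacher proved that the maximum partial $k$-spread of $\F_q^n$ has exactly $\frac{q^n-q}{q^k-1}-q+1$ elements, so its hole contains $q^k$ points---more than the $\frac{q^k-1}{q-1}$ points of any single element of $\cG_q(n,k)$. Hence no ``suitably chosen maximal partial spread'' can leave a hole inside one $k$-subspace: the extra subspaces must overlap \emph{one another}, not merely the spread part.

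The missing idea is classical and is the natural completion of your own ``concentrate the overlaps'' remark: peel, then cone. Peeling: with $U$ of dimension $n-k$ (the role played by $\V_0^{(n,k)}$ in this paper), the $q^{n-k}$ lifted codewords of a $[k\times(n-k),\,n-k,\,k]$ MRD code are $k$-subspaces that partition the nonzero vectors outside $U$; recurse inside $U$ until the ambient dimension drops to $k+b$. Cone: in $W\cong \F_q^{k+b}$ fix a subspace $T$ of dimension $k-b$; the quotient $W/T\cong\F_q^{2b}$ admits a $b$-spread with $q^b+1$ elements, and their preimages in $W$ are $q^b+1$ subspaces of dimension $k$, pairwise meeting exactly in $T$, whose union is all of $W$ (every coset $v+T$ lies in some spread element of the quotient). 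The count is $\sum_{i=1}^{a-1}q^{n-ik}+q^b+1=q^b\frac{q^{ak}-1}{q^k-1}+1$, matching the ceiling; when $k<n<2k$ the cone alone gives $q^b+1$. This realizes exactly what your budget computation demands---each extra subspace must waste at least $q^{k-b}-1$ nonzero vectors, and the cone puts all of that waste into the one common subspace $T$---which is the step your proposal identifies as the crux but does not supply.
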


\begin{theorem}
\label{thm:Turan=1} If $1 \leq r \leq n-1$, then $\cC_q(n,n-1,r) = \frac{q^{r+1}-1}{q-1}$.
\end{theorem}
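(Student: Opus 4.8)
The plan is to prove matching upper and lower bounds, both of which ultimately reduce to the rigid behavior of a covering inside a single $(r+1)$-dimensional subspace. For the upper bound I would fix an $(n-r-1)$-dimensional subspace $T \subseteq \F_q^n$ (which exists since $r \leq n-1$) and take $\dS$ to be the collection of \emph{all} hyperplanes containing $T$. These hyperplanes are in bijection with the hyperplanes of the quotient $\F_q^n / T$, a space of dimension $r+1$, so $|\dS| = \sbinomq{r+1}{1} = \frac{q^{r+1}-1}{q-1}$. To see that $\dS$ is a covering, let $V \in \cG_q(n,r)$; then $\dim(T+V) \leq \dim T + \dim V = n-1$, so $T+V$ is contained in some hyperplane $H$. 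This $H$ contains both $T$ and $V$, hence $H \in \dS$ and $V \subseteq H$. This yields $\cC_q(n,n-1,r) \leq \frac{q^{r+1}-1}{q-1}$.

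For the lower bound I would suppose, toward a contradiction, that $\dS$ is a covering with $m$ hyperplanes and $m < \frac{q^{r+1}-1}{q-1}$. The central idea is to restrict attention to a suitable $(r+1)$-dimensional subspace $U$ that is \emph{not} contained in any member of $\dS$. For such a $U$, each $H \in \dS$ satisfies $U \not\subseteq H$, which forces $U + H = \F_q^n$ and hence $\dim(U \cap H) = (r+1)+(n-1)-n = r$. An $r$-dimensional subspace $V \subseteq U$ is then covered by $H$ if and only if $V = U \cap H$, so each hyperplane covers exactly one of the $r$-dimensional subspaces of $U$. Since there are $\sbinomq{r+1}{r} = \frac{q^{r+1}-1}{q-1}$ such subspaces and all of them must be covered by $\dS$, we obtain $m \geq \frac{q^{r+1}-1}{q-1}$, contradicting the assumption. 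Combined with the upper bound, this gives the asserted equality.

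The step I expect to be the main obstacle is producing the uncovered subspace $U$. The number of $(r+1)$-dimensional subspaces lying in at least one member of $\dS$ is at most $m \cdot \sbinomq{n-1}{r+1}$, so it suffices to verify $m \cdot \sbinomq{n-1}{r+1} < \sbinomq{n}{r+1}$, that is $m < \sbinomq{n}{r+1}/\sbinomq{n-1}{r+1} = \frac{q^n-1}{q^{n-r-1}-1}$. I would establish the elementary inequality $\frac{q^{r+1}-1}{q-1} \leq \frac{q^n-1}{q^{n-r-1}-1}$ for $r \leq n-2$ by clearing denominators and comparing the resulting polynomials in $q$; since $m < \frac{q^{r+1}-1}{q-1}$, this places $m$ strictly below $\frac{q^n-1}{q^{n-r-1}-1}$ and thereby guarantees an uncovered $U$. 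The boundary case $r = n-1$ is handled directly: the unique $(r+1)$-dimensional subspace is $\F_q^n$ itself, which lies in no hyperplane, so $U = \F_q^n$ works and the restriction argument immediately recovers $\cC_q(n,n-1,n-1) = \frac{q^n-1}{q-1}$.
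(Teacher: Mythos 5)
Your proposal is correct, but there is nothing in the paper to compare it against: the paper states this theorem without proof, quoting it from~\cite{EtVa11a} and noting that it was first proved, in the language of projective geometry (as a statement about blocking sets), by Bose and Burton~\cite{BoBu66}. What you have written is in effect a self-contained, vector-space rendering of the classical Bose--Burton argument, and all the steps check out. The upper bound (all $\frac{q^{r+1}-1}{q-1}$ hyperplanes through a fixed $(n-r-1)$-dimensional subspace $T$, which cover every $r$-dimensional $V$ because $\dim(T+V)\leq n-1$) is the standard construction. For the lower bound, your restriction to an $(r+1)$-dimensional subspace $U$ contained in no member of $\dS$ is exactly right: each $H\in\dS$ then satisfies $U+H=\F_q^n$, so $U\cap H$ is the unique $r$-dimensional subspace of $U$ covered by $H$, forcing $|\dS|\geq \sbinomq{r+1}{r}=\frac{q^{r+1}-1}{q-1}$. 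The one step that needed genuine verification is the existence of $U$, and your inequality does hold: you need $m<\sbinomq{n}{r+1}\big/\sbinomq{n-1}{r+1}=\frac{q^n-1}{q^{n-r-1}-1}$, and clearing denominators in $\frac{q^{r+1}-1}{q-1}\leq\frac{q^n-1}{q^{n-r-1}-1}$ reduces it to $2q^n+q\leq q^{n+1}+q^{r+1}+q^{n-r-1}$, which follows from $q^{n+1}\geq 2q^n$ and $q^{r+1}\geq q$ for $q\geq 2$ (indeed strictly). Your separate handling of the boundary case $r=n-1$ is also necessary, since the double count involves $\sbinomq{n-1}{r+1}$, and it is handled correctly. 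A purely cosmetic remark: the number of hyperplanes through $T$ is most naturally written $\sbinomq{r+1}{r}$; this equals your $\sbinomq{r+1}{1}$ by the symmetry of Gaussian coefficients, so no harm done.
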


The next theorem is a consequence of the main recursive
construction. It is given in~\cite{EtVa11a} and it is frequently used
in our tables. As the $q$-covering designs obtained by this
construction might be the building blocks for other constructions
it is presented for completeness and understanding
the other constructions. In the sequel we denote by $\Span{A}$
the subspace of $\F_q^n$ which is spanned by the the set of
elements in $A \subset \F_q^n$.

\begin{const}
\label{con:recursiveC}
Let us represent $\F_q^n$ as
$\{ (x,\alpha) ~:~ x \in \F_q^{n-1},~ \alpha \in \F_q \}$.
Suppose that  $\dS_1$ is a $q$-covering design
$\C_q(n-1,k,r)$ in $\F_q^{n-1}$ and  $\dS_2$ is a
$q$-covering design $\C_q(n-1,k-1,r-1)$ in
$\F_q^{n-1}$. Given a subspace $X$ of $\F_q^{n-1}$, we
define a corresponding subspace $X \times \{0\}$ of $\F_q^n$ as
follows: $ X \times \{0\} = \{ (\vvv,0)
\in \F_q^n ~:~ \vvv \in
X \} $.
Note that if $\dim X = k-1$, then there are
exactly $q^{n-k}$ distinct subspaces of the form
$(X\times \{0\} ) \oplus \Span{\{(x,1)\}}$, each
of dimension $k$ (since we can choose $x$ from any one of the
$q^{n-k}$ cosets of $X$ in $\F_q^{n-1}$). With this, we now
define the sets $\dS'_1$ and $\dS'_2$ as
follows:\vspace{1.00ex}
$$
\dS'_1  \deff\ \bigl\{ X\times \{0\} \subset \F_q^n ~:~ X \in
\dS_1 \bigr\},
$$
$$
\dS'_2 \deff\ \bigl\{ (X \times \{0\} ) \oplus \Span{\{(x,1)\}}
\subset \F_q^n ~:~ X \in \dS_2, ~x \in \F_q^{n-1} \bigr\}.
$$
Let $\dS' = \dS'_1 \cup \dS'_2$.
\end{const}

\begin{theorem}
\label{thm:recursiveC} $\dS'$ is a $q$-covering design $\C_q(n,k,r)$,
and hence $\cC_q (n,k,r) \leq q^{n-k} \cC_q
(n-1,k-1,r-1) + \cC_q (n-1,k,r)$.
\end{theorem}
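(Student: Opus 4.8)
The plan is to verify the covering property directly and then read off the size bound from the construction. Write $H \deff \F_q^{n-1}\times\{0\}$ for the hyperplane of $\F_q^n$ carrying the first $n-1$ coordinates; by construction every element of $\dS_1'$ lies in $H$, while no element of $\dS_2'$ does, since each contains a vector whose last coordinate is $1$. I would fix an arbitrary $R\in\cG_q(n,r)$ and show it is contained in some member of $\dS'$, splitting into the cases $R\subseteq H$ and $R\not\subseteq H$.

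The case $R\subseteq H$ is immediate: then $R = R'\times\{0\}$ for a unique $R'\in\cG_q(n-1,r)$, and since $\dS_1$ is a $q$-covering design $\C_q(n-1,k,r)$ there is $X\in\dS_1$ with $R'\subseteq X$; hence $R\subseteq X\times\{0\}\in\dS_1'\subseteq\dS'$.

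The case $R\not\subseteq H$ is the heart of the argument. Since $H$ is a hyperplane we have $R+H=\F_q^n$, so the dimension formula gives $\dim(R\cap H)=r-1$; thus $R\cap H=R''\times\{0\}$ for some $R''\in\cG_q(n-1,r-1)$. Because $\dS_2$ is a $q$-covering design $\C_q(n-1,k-1,r-1)$, there is $X\in\dS_2$ with $R''\subseteq X$ and $\dim X=k-1$. The key step is to choose the right coset representative: pick any vector of $R$ outside $H$ and scale it so its last coordinate is $1$, say $(y,1)\in R$, and set $x=y$. Then $(X\times\{0\})\oplus\Span{\{(y,1)\}}$ is a genuine (direct) sum of dimension $k$ — directness holds because $(y,1)\notin H\supseteq X\times\{0\}$ — it lies in $\dS_2'$, and it contains both $R\cap H=R''\times\{0\}\subseteq X\times\{0\}$ and the vector $(y,1)$. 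Since $R=(R\cap H)\oplus\Span{\{(y,1)\}}$, this member of $\dS_2'$ covers $R$. I expect this step — recognizing that covering the hyperplane section $R\cap H$ by $\dS_2$ and then adjoining the single coset direction $x=y$ suffices — to be the only real obstacle; everything else is bookkeeping.

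Finally, for the numerical bound I would count. The sets $\dS_1'$ and $\dS_2'$ are disjoint, as one lies in $H$ and the other does not, and $|\dS_1'|=|\dS_1|$. For each fixed $X$ with $\dim X=k-1$, two subspaces $(X\times\{0\})\oplus\Span{\{(x,1)\}}$ coincide precisely when the corresponding values of $x$ lie in the same coset of $X$ in $\F_q^{n-1}$, so they contribute at most $q^{n-k}$ distinct members; hence $|\dS_2'|\le q^{n-k}|\dS_2|$. Taking $\dS_1$ and $\dS_2$ to be optimal coverings of sizes $\cC_q(n-1,k,r)$ and $\cC_q(n-1,k-1,r-1)$ yields $\cC_q(n,k,r)\le q^{n-k}\cC_q(n-1,k-1,r-1)+\cC_q(n-1,k,r)$, as claimed.
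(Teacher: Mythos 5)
Your proof is correct and is essentially the intended argument behind Construction~\ref{con:recursiveC}: the paper itself states Theorem~\ref{thm:recursiveC} without proof (deferring to~\cite{EtVa11a}), but your case split on whether the $r$-dimensional subspace lies in the hyperplane $\F_q^{n-1}\times\{0\}$, covering the $(r-1)$-dimensional section by $\dS_2$ and adjoining the coset direction $(y,1)$, is exactly the verification the construction is designed for, and your count matches the construction's own remark that each $X\in\dS_2$ contributes exactly $q^{n-k}$ subspaces, one per coset of $X$ in $\F_q^{n-1}$. Nothing is missing.
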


Normal spreads~\cite{Lun99}, also known as geometric spreads~\cite{BeUe91},
are used to prove the following values of $\cC_q(n,k,r)$~\cite{BlEt11}.

\begin{theorem}
\label{thm:n_spreads}
$\cC_q (vm+\delta,vm-m+\delta,v-1) =
\frac{q^{vm}-1}{q^m-1}$ for all $v \geq 2$, $m \geq 2$, and $\delta \geq 0$.
\end{theorem}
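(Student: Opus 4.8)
The plan is to prove the two inequalities separately, reducing the upper bound to the case $\delta=0$ and establishing the lower bound uniformly in $\delta$ by induction. For the upper bound with $\delta=0$ (so $n=vm$, $k=(v-1)m$, $r=v-1$) I would use field reduction: identify $\F_q^{vm}$ with the $v$-dimensional space $\F_{q^m}^v$. The normal (Desarguesian) spread of $\F_q^{vm}$ is then the set of $1$-dimensional $\F_{q^m}$-subspaces, each an $m$-dimensional $\F_q$-subspace, and every $\F_{q^m}$-subspace of $\F_{q^m}$-dimension $j$ is an $\F_q$-subspace of dimension $jm$ that is a union of spread elements. Take $\cS$ to be the collection of all $\F_{q^m}$-hyperplanes of $\F_{q^m}^v$; each is a $(v-1)m$-dimensional $\F_q$-subspace, so $\cS\subseteq\cG_q(vm,(v-1)m)$, and there are exactly $\frac{(q^m)^v-1}{q^m-1}=\frac{q^{vm}-1}{q^m-1}$ of them. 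To see that $\cS$ covers every $R\in\cG_q(vm,v-1)$, consider the $\F_{q^m}$-span $\Span{R}_{q^m}$: since $R$ is spanned over $\F_q$ by $v-1$ vectors, $\Span{R}_{q^m}$ has $\F_{q^m}$-dimension at most $v-1$, hence lies in some $\F_{q^m}$-hyperplane $H$, and then $R\subseteq\Span{R}_{q^m}\subseteq H\in\cS$. This gives $\cC_q(vm,(v-1)m,v-1)\le\frac{q^{vm}-1}{q^m-1}$, and for $\delta>0$ I would apply Theorem~\ref{thm:lengthening} a total of $\delta$ times to get $\cC_q(vm+\delta,(v-1)m+\delta,v-1)\le\cC_q(vm,(v-1)m,v-1)\le\frac{q^{vm}-1}{q^m-1}$.

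For the lower bound I would prove the more convenient statement $\cC_q(n,n-m,r)\ge\frac{q^{(r+1)m}-1}{q^m-1}$ for every $n\ge(r+1)m$, which specializes to the theorem via $r=v-1$, $n=vm+\delta$. Proceed by induction on $r$. The base case $r=1$ is Theorem~\ref{thm:optc2}: since $\frac{q^n-1}{q^{n-m}-1}=q^m+\frac{q^m-1}{q^{n-m}-1}$ with $0<\frac{q^m-1}{q^{n-m}-1}\le1$ (using $n\ge 2m$), we get $\cC_q(n,n-m,1)=q^m+1=\frac{q^{2m}-1}{q^m-1}$. For the inductive step, apply Theorem~\ref{thm:schonheim} and then the induction hypothesis at $n-1\ge rm$:
$$\cC_q(n,n-m,r)\ge\left\lceil\frac{q^n-1}{q^{n-m}-1}\,\cC_q(n-1,n-1-m,r-1)\right\rceil\ge\left\lceil\frac{q^n-1}{q^{n-m}-1}\cdot\frac{q^{rm}-1}{q^m-1}\right\rceil .$$

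The crux is evaluating this ceiling. Using $\frac{q^n-1}{q^{n-m}-1}=q^m+\frac{q^m-1}{q^{n-m}-1}$, the product inside equals $A+\frac{q^{rm}-1}{q^{n-m}-1}$, where $A=q^m\cdot\frac{q^{rm}-1}{q^m-1}$ is an integer because $\frac{q^{rm}-1}{q^m-1}=1+q^m+\cdots+q^{(r-1)m}\in\Z$. Since the remaining term $\frac{q^{rm}-1}{q^{n-m}-1}$ is strictly positive, the ceiling is at least $A+1$, and $A+1=\frac{q^m(q^{rm}-1)+(q^m-1)}{q^m-1}=\frac{q^{(r+1)m}-1}{q^m-1}$, completing the induction and hence the lower bound for all $\delta\ge0$. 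I expect this ceiling bookkeeping to be the only real obstacle: the covering bound of Theorem~\ref{thm:covering_bound} by itself is strictly too weak here (for instance for $\cC_2(6,4,2)$ it yields only $19<21$), so the argument genuinely depends on the integrality of $A$ forcing an extra $+1$ at each step rather than on a single counting inequality. Combining the matching bounds gives the stated equality.
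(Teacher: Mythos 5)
Your proof is correct, but there is nothing in the paper to compare it against line by line: Theorem~\ref{thm:n_spreads} is stated without proof, quoted from Blackburn and Etzion~\cite{BlEt11} with the remark that normal (geometric) spreads~\cite{Lun99,BeUe91} are behind it. Your upper bound is exactly the construction that citation alludes to: under field reduction the $\F_{q^m}$-hyperplanes of $\F_{q^m}^v$ are $(v-1)m$-dimensional $\F_q$-subspaces (each a union of elements of the normal $m$-spread), there are $\frac{q^{vm}-1}{q^m-1}$ of them, and your $\F_{q^m}$-span argument correctly shows every $(v-1)$-dimensional $\F_q$-subspace lies in one; handling $\delta>0$ by applying Theorem~\ref{thm:lengthening} $\delta$ times is legitimate, since lengthening raises $n$ and $k$ together while fixing $r$. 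Your lower bound by iterating the $q$-Sch\"{o}nheim bound (Theorem~\ref{thm:schonheim}) is also sound, and the ceiling bookkeeping checks out: with $k=n-m$ one has $\frac{q^n-1}{q^{n-m}-1}=q^m+\frac{q^m-1}{q^{n-m}-1}$, the product equals $A+\frac{q^{rm}-1}{q^{n-m}-1}$ with $A=q^m\cdot\frac{q^{rm}-1}{q^m-1}\in\Z$, the hypothesis $n\geq(r+1)m$ puts the error term in $(0,1]$ so the ceiling is exactly $A+1=\frac{q^{(r+1)m}-1}{q^m-1}$, the induction hypothesis applies at $n-1$ because $n-1\geq rm+m-1\geq rm$, and the base case $r=1$ follows from Theorem~\ref{thm:optc2} as you compute. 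Your observation that the plain covering bound (Theorem~\ref{thm:covering_bound}) is strictly too weak (indeed it gives only $19<21$ for $\cC_2(6,4,2)$) and that the integrality of $A$ forces the extra $+1$ at each level is precisely the crux. The only difference worth flagging is one of route: the lower bound in the cited literature is obtained geometrically, via blocking-set and geometric-spread arguments in projective space~\cite{BeUe91}, whereas your arithmetic induction is more elementary and entirely self-contained within this paper's stated toolbox (Theorems~\ref{thm:schonheim}, \ref{thm:lengthening}, and~\ref{thm:optc2}), which is a genuine expository gain.
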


%Finally, one specific result that can be used as an initial condition
%for both recursive formulas of lower and upper bound is given in~\cite{EtVa11a}.
%\begin{theorem}
%\label{thm:b27}
%$\cC_2(5,3,2)=27$.
%\end{theorem}

Theorem~\ref{thm:n_spreads} is one of the general results on the minimal
size of a $q$-covering design which was solved in the context of projective
geometry~\cite{BeUe91}. Some of the results stated in this
section were proved before in terms of projective geometry.
In the notation of projective geometry we have that $\cC_q(n,k,r)$
is the smallest size of a set $\T$ which contains $(k-1)$-subspaces
of PG($n-1,q$) such that every $(r-1)$-subspace is contained
in at least one element of $\T$. It is also equal
the smallest size of a set $\T'$ which contains
$(n-k-1)$-subspaces of PG($n-1,q$),
such that every $(n-r-1)$-subspace contains at least one element of $\T'$.

\begin{remark}
The corresponding $q$-covering design $\C_q(n,k,r)$ has $k$-dimensional
subspaces of~$\F_q^n$. The dual $(n-k)$-dimensional subspaces
of $\F_q^n$ are the $(n-k-1)$-subspaces of $\T'$ from the
projective geometry PG($n-1,q$). We note again that there is a difference
of one in the definition of dimension between the Grassmannian and the
projective geometry.
\end{remark}

Theorem~\ref{thm:Turan=1} was proved before in the context of
projective geometry by Bose and Burton in~\cite{BoBu66}.
Another lower bound was given in~\cite{EiMe97} by considering sets
of lines in PG($2s,q$) contained in $s$-subspaces.
\begin{theorem}
\label{thm:EiMe}
$\cC_q (2s+1,2s-1,s) \geq \frac{q^{2s+2}-q^2}{q^2-1} + \frac{q^{s+1}-1}{q-1}$ for
every integer $s \geq 2$.
\end{theorem}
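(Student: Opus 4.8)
The plan is to work with the dual description of $\cC_q(2s+1,2s-1,s)$ recorded just before the theorem: it is the least number of lines of PG($2s,q$) subject to the requirement that every $s$-dimensional subspace contains at least one of them. Write $\mathcal{L}$ for a minimum such family and $N=|\mathcal{L}|$. A line lies in exactly $\sbinomq{2s-1}{s-1}$ of the $\sbinomq{2s+1}{s+1}$ $s$-subspaces, so counting incident pairs gives $N\,\sbinomq{2s-1}{s-1}\ge\sbinomq{2s+1}{s+1}$, which is the covering bound of Theorem~\ref{thm:covering_bound} and already supplies the dominant term of order $q^{2s}$. Everything at stake here is the lower-order terms, and Theorem~\ref{thm:covering_bound} tells us that equality would force a perfectly efficient ($q$-Steiner-like) packing of the lines into the $s$-subspaces; the absence of such a packing for these parameters is exactly what must be turned into a quantitative gap.

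My main line of attack is a descent in the projective dimension, staying inside the family $\cC_q(n,n-2,r)$ to which our quantity belongs. If $H$ is a hyperplane of PG($2s,q$), then any $s$-subspace lying in $H$ must be blocked by a line forced into $H$; hence $\mathcal{L}\cap H$ is again a blocking family of the same type one dimension down, so $|\mathcal{L}\cap H|\ge\cC_q(2s,2s-2,s-1)$. Summing over all hyperplanes and using that each line lies in $\tfrac{q^{2s-1}-1}{q-1}$ of them yields a Sch\"{o}nheim-type inequality
\begin{equation*}
N\ \ge\ \frac{q^{2s+1}-1}{q^{2s-1}-1}\,\cC_q(2s,2s-2,s-1),
\end{equation*}
and iterating such steps, combined with Theorems~\ref{thm:schonheim} and~\ref{thm:lengthening}, drives the parameters down to the base case $\cC_q(3,1,1)=q^2+q+1$ given by Theorem~\ref{thm:optc2}. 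As a numerical check, writing $E(s)$ for the claimed right-hand side one verifies $E(s)=E(s-1)+q^{2s}+q^s$ with $E(1)=q^2+q+1$, so the target value is precisely what an additive one-step gain of $q^{2s}+q^s$ over the predecessor $\cC_q(2s-1,2s-3,s-1)$ would produce.

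The difficulty is that these descent inequalities are essentially multiplicative and, taken at face value, only reproduce the covering bound; the extra lower-order terms must come from a genuinely geometric refinement at one of the steps. Concretely, I would isolate a point and the star of $s$-subspaces through it, and argue that the lines of $\mathcal{L}$ cannot simultaneously block every such $s$-subspace and avoid covering some of them more than once: the local blocking requirement around the point forces a surplus of lines there, and this surplus, aggregated correctly, is what contributes the additive $q^{2s}+q^s$. This is where the non-existence of a perfect design has to be made quantitative rather than merely invoked.

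The hard part is getting the constant exactly right. Because the bound is attained with equality there is no room for slack, and the hyperplane and point reductions drift off the ``diagonal'' $n=2r+1$: a single hyperplane section of our instance is a blocking problem for $s$-subspaces in PG($2s-1,q$), a close cousin rather than a rescaled copy, so the induction must carry several related parameters at once and close the resulting system with a sharp count. I expect the crux to be a tight local analysis -- in effect a partial description of how a near-optimal $\mathcal{L}$ must look in the neighbourhood of a point or a low-dimensional flat -- that pins down the minimum surplus, and hence the exact additive term, rather than only its order of magnitude; securing a point (and residual hyperplane) for which this surplus is provably present is likely to require an averaging argument over the choice of centre.
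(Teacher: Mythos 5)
Your write-up is a plan, not a proof, and the decisive step is missing. (For context: the paper itself does not prove this theorem; it imports it from Eisfeld and Metsch~\cite{EiMe97}, so the bound's proof is precisely the geometric argument you would have to supply.) What you do establish is correct but known and insufficient: the dual reformulation (lines of PG($2s,q$) blocking all $s$-subspaces) matches the paper's duality discussion, and your hyperplane-averaging inequality $N \geq \frac{q^{2s+1}-1}{q^{2s-1}-1}\,\cC_q(2s,2s-2,s-1)$ is exactly the Sch\"{o}nheim bound (Theorem~\ref{thm:schonheim}) in dual clothing. Iterating it, as the paper notes, yields only the covering bound (Theorem~\ref{thm:covering_bound}), which here equals $\frac{(q^{2s+1}-1)(q^{2s}-1)}{(q^{s+1}-1)(q^{s}-1)} \approx q^{2s}+q^{s}+\cdots$, whereas the target is $\sum_{i=1}^{s} q^{2i} + \sum_{j=0}^{s} q^{j}$; the shortfall is of order $q^{2s-2}$, a full second-order term that no amount of double counting of incidences can recover. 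Your invocation of Theorem~\ref{thm:lengthening} is also in the wrong direction for a lower bound: it gives $\cC_q(n,k,r) \geq \cC_q(n+1,k+1,r)$, which moves the parameters \emph{further} off the diagonal rather than down toward $\cC_q(3,1,1)$, so the claimed descent to that base case does not exist as described.

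The paragraph where you propose to extract an additive surplus of $q^{2s}+q^{s}$ from the star of $s$-subspaces through a point is where the theorem actually lives, and you leave it entirely conjectural: no lemma is stated, no averaging is carried out, and no mechanism is given that converts the non-existence of a perfect packing into a quantitative excess. The identity $E(s)=E(s-1)+q^{2s}+q^{s}$ is a property of the right-hand side, not a proof step; worse, the induction it suggests would need a comparison with $\cC_q(2s-1,2s-3,s-1)$, while your hyperplane section produces $\cC_q(2s,2s-2,s-1)$ (even ambient dimension), a mismatch you flag but never resolve. In short, everything you prove reproduces bounds already in the paper (Theorems~\ref{thm:schonheim} and~\ref{thm:covering_bound}), and the entire content of the Eisfeld--Metsch inequality --- the sharp structural analysis of near-minimal line sets that forces the extra $\frac{q^{2s+2}-q^2}{q^2-1}+\frac{q^{s+1}-1}{q-1}$ rather than the covering bound --- remains unproven.
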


Metsch~\cite{Met03} also gave a construction for a set of lines in
PG($2s+x-1,q$), for every $1 \leq x \leq s$, contained in $s$-subspaces,
which yields the following theorem:
\begin{theorem}
\label{thm:Met03a}
For any given integers $q \geq 2$, $1 \leq x \leq s$ we have
$\cC_q (2s+x,2s+x-2,s+x-1)  \leq \frac{q^{2s+2x}-q^{2x}}{q^2-1}
+ \frac{q^x-1}{q-1} \cdot \frac{q^{s+x}-q^{x-1}}{q-1}$.
\end{theorem}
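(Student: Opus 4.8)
The plan is to pass to the dual description of $\cC_q$ in projective geometry recorded above: $\cC_q(2s+x,2s+x-2,s+x-1)$ equals the least number of lines of PG$(2s+x-1,q)$ such that every $s$-dimensional subspace contains at least one of them. So it suffices to exhibit a family $L$ of lines with this covering property and with $\abs{L}$ at most the claimed value, and the two summands of the bound suggest building $L$ in two parts matched to a case split. First I would fix a subspace $N$ of PG$(2s+x-1,q)$ of (projective) dimension $x-1$ together with a complementary $W$ of dimension $2s-1$, so that $W$ and $N$ are skew and $\Span{W,N}$ is the whole space; let $\pi$ denote projection from the center $N$ onto $W\cong$ PG$(2s-1,q)$. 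Every $s$-subspace $S$ then falls into one of two cases according to whether $S\cap N=\emptyset$ or $S\cap N\neq\emptyset$.

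For the first case I would use a normal line spread $\cD$ of $W$, which exists and, by Theorem~\ref{thm:n_spreads} (with $v=s$, $m=2$, $\delta=0$), has the property that every $s$-subspace of $W$ contains a spread line; here $\abs{\cD}=\frac{q^{2s}-1}{q^2-1}$. The first part $L_1$ of $L$ consists, for each $\bar\ell\in\cD$, of all lines of the join $\Span{\bar\ell,N}$ that are disjoint from $N$ and project onto $\bar\ell$; these are exactly the $2$-dimensional complements of $N$ inside $\Span{\bar\ell,N}$, of which there are $q^{2x}$, so $\abs{L_1}=q^{2x}\cdot\frac{q^{2s}-1}{q^2-1}$, the first summand. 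The verification in this case is then clean: if $S\cap N=\emptyset$ then $\pi$ restricts to an isomorphism $S\to\pi(S)$, the image $\pi(S)$ is an $s$-subspace of $W$ and hence contains some $\bar\ell\in\cD$, and the line $\pi|_S^{-1}(\bar\ell)\subseteq S$ is disjoint from $N$ and projects onto $\bar\ell$, so it lies in $L_1$. Thus $L_1$ already covers every $s$-subspace skew to $N$.

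For the second case I would take $L_2$ to be a cone-type family with vertex $N$: for each point $\nu$ of $N$ one joins $\nu$ to a suitably chosen set of $\frac{q^{s+x}-q^{x-1}}{q-1}$ points, so that $\abs{L_2}=\frac{q^x-1}{q-1}\cdot\frac{q^{s+x}-q^{x-1}}{q-1}$, the second summand. The point of the choice is that any $s$-subspace $S$ meeting $N$ contains a point $\nu\in N$, and the cone through $\nu$ must be arranged so that $S$ necessarily contains one of its lines. Taking $L=L_1\cup L_2$ gives $\abs{L}\le\abs{L_1}+\abs{L_2}$, which is exactly the claimed value, and combined with the two cases this proves the stated upper bound on $\cC_q$.

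The hard part will be the second case: pinning down precisely which points each vertex $\nu\in N$ must be joined to (the structure hidden in the factor $\frac{q^{s+x}-q^{x-1}}{q-1}=\frac{q^{s+x}-1}{q-1}-\frac{q^{x-1}-1}{q-1}$, i.e. the points of an $(s+x-1)$-subspace lying outside a fixed $(x-2)$-subspace), and then proving that every $s$-subspace $S$ with $S\cap N\neq\emptyset$ genuinely contains one of the cone lines. This is a dimension-and-incidence argument controlling how $S$ can meet both $N$ and the base of the cone, and it is the crux of Metsch's construction; the first case, by contrast, reduces cleanly to the normal-spread fact of Theorem~\ref{thm:n_spreads}.
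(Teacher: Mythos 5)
First, a point of comparison: the paper does not actually prove Theorem~\ref{thm:Met03a}; it quotes the bound as a consequence of Metsch's construction in~\cite{Met03} of a line set in PG$(2s+x-1,q)$ meeting every $s$-subspace. So there is no internal proof to match against, and your dualization of $\cC_q(2s+x,2s+x-2,s+x-1)$ to that line-covering problem is exactly the right reading of the statement. Your first family $L_1$ is moreover correct and complete: the dual reading of Theorem~\ref{thm:n_spreads} with $v=s$, $m=2$, $\delta=0$ (i.e.\ $\cC_q(2s,2s-2,s-1)=\frac{q^{2s}-1}{q^2-1}$, attained by the orthogonal complement of a normal line spread) does give that every $s$-subspace of $W\cong\mathrm{PG}(2s-1,q)$ contains a spread line; the count of $q^{2x}$ vector-$2$-dimensional complements of $N$ inside $\Span{\bar\ell,N}$ is right; and the projection argument cleanly covers every $S$ with $S\cap N=\emptyset$, accounting for the first summand $\frac{q^{2s+2x}-q^{2x}}{q^2-1}$.

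However, the proposal has a genuine gap, and you name it yourself: the entire second case is left unproved. For $s$-subspaces $S$ with $S\cap N\neq\emptyset$ you only posit ``a suitably chosen'' cone family $L_2$ whose cardinality matches the second summand, reverse-engineered from the factorization $\frac{q^{s+x}-q^{x-1}}{q-1}=\frac{q^{s+x}-1}{q-1}-\frac{q^{x-1}-1}{q-1}$; you never specify the base subspaces $T_\nu$ and excluded subspaces $M_\nu$ for the vertices $\nu\in N$, nor verify the covering. This is not a routine detail: a naive independent choice per vertex fails, because one must rule out the degenerate incidences (e.g.\ $S\cap T_\nu\subseteq M_\nu$, or $S$ meeting $N$ only in points whose cones miss $S$), which requires coordinating the choices of $T_\nu$, $M_\nu$ across all of $N$ and against $L_1$ --- for general $x$ this is handled in~\cite{Met03} essentially by a recursion on $x$. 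Note that in the special case $x=1$ your sketch does close up ($N$ is a single point $\nu$, $M_\nu$ is empty, and $\dim(S\cap T_\nu)\geq 0$ forces a cone line in any $s$-subspace through $\nu$, matching the Eisfeld--Metsch lower bound of Theorem~\ref{thm:EiMe}); but for $x\geq 2$ the crux of the argument, carrying the second summand, is missing, so as it stands the proposal establishes only half of the claimed bound.
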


Finally, also Theorem~\ref{thm:optc2} was proved in terms of
projective geometry. In projective geometry $\cC_q(n+1,k+1,1)$
is the minimal number of $k$-subspaces in PG($n,q$) such that
each point of PG($n,q$) is contained in at least one of these subspaces.
The solution obtained in Theorem~\ref{thm:optc2} was obtained
before by Beutelspacher~\cite{Beu79}.

The proofs for all these results and related results
in projective geometry were also given by Metsch in~\cite{Met03}.

\section{Basic Known Concepts for the Construction}
\label{sec:known_concepts}

In this section we introduce a few concepts which are used in our
constructions: lifting of maximum rank distance (MRD in short)
codes, subspace transversal designs, spreads,
and partition of the Grassmannian
$\cG_2(4,2)$ into disjoint spreads.

\subsection{MRD codes and subspace transversal designs}

For two $k \times \ell$ matrices $A$ and $B$ over $\F_q$ the {\it
rank distance} is defined by
$$
d_R (A,B) \deff \text{rank}(A-B)~.
$$
A  $[k \times \ell,\varrho,\delta]$ {\it rank-metric code} $\cC$
is a linear code, whose codewords are $k \times \ell$ matrices
over $\F_q$; they form a linear subspace with dimension $\varrho$
of $\F_q^{k \times \ell}$, and for each two distinct codewords $A$
and $B$ we have that $d_R (A,B) \geq \delta$ (clearly,
$\delta \leq \min \{k , \ell \}$). For a $[k \times
\ell,\varrho,\delta]$ rank-metric code $\cC$ it was proved
in~\cite{Del78,Gab85,Rot91} that
\begin{equation}
\label{eq:MRD} \varrho \leq \text{min}
\left\{k(\ell-\delta+1),\ell(k-\delta+1) \right\}~.
\end{equation}
This bound is attained for all possible parameters and the codes
which attain it are called {\it maximum rank distance} codes (or
MRD codes in short).

A subset $\C$ of $\cG_q(n,k)$ is called an  $(n,M,d_S,k)_q$
($(n,M,d_S,k)$ if $q=2$) {\it constant
dimension code} if it has size $M$ and minimum distance $d_S$,
where the distance function in $\cG_q(n,k)$ is defined by
\begin{equation*}
\label{def_subspace_distance} d_S (X,\!Y) \deff  2k
-2 \dim\bigl( X\, {\cap}Y\bigr),
\end{equation*}
for any two subspaces $X$ and $Y$ in $\cG_q(n,k)$.

There is a close connection between constant dimension codes and
rank-metric codes~\cite{EtSi09,SKK08}.
Let $A$ be a $k \times \ell$ matrix over $\F_q$ and let $I_k$ be the
$k \times k$ identity matrix. The matrix $[ I_k ~ A ]$ can be
viewed as a generator matrix of a $k$-dimensional subspace of
$\F_q^{k+\ell}$, and it is called the \emph{lifting} of
$A$~\cite{SKK08}.
\begin{example}
Let $A$ and $[I_3 ~A]$ be the following matrices over $\F_2$
$$A=\left( \begin{array}{ccc}
1& 1 & 0\\
0& 1 & 1\\
0& 0 & 1
\end{array}
\right) ~, ~~ [I_3 ~ A] =\left( \begin{array}{cccccc}
1&0&0&1& 1 & 0\\
0&1&0&0& 1 & 1\\
0&0&1&0& 0 & 1
\end{array}
\right),
$$
then the subspace obtained by the lifting of $A$ is given by the
following $8$ vectors:
$$(1 0 0 1 1 0),
(0 1 0 0 1 1), (0 0 1 0 0 1),(1 1 0 1 0 1),$$
$$(1 0 1 1 1 1),(0 1 1 0 1 0),
(1 1 1 1 0 0),(0 0 0 0 0 0).
$$
\end{example}

A constant dimension code $\mathbb{C}$ such that all its codewords
are lifted codewords of an MRD code is called a \emph{lifted MRD
code}~\cite{SKK08}. This code will be denoted by $\CMRD$.

\begin{theorem}\cite{SKK08}
\label{trm:param lifted MRD} If $\cC$ is a $[k \times (n-k),
(n-k)(k-\delta +1),\delta ]$ MRD code then $\C^{\textmd{MRD}}$ is
an $(n,q^{(n-k)(k-\delta+1)}, 2\delta, k)_{q}$ code.
\end{theorem}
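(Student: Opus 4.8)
The plan is to verify each of the three claimed parameters of $\CMRD$ in turn, namely the ambient dimension $n$, the size $q^{(n-k)(k-\delta+1)}$, and the minimum subspace distance $2\delta$, by unwinding the definition of the lifting map and relating the rank distance of the underlying MRD code to the subspace distance of its lifts.

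First I would record the setup. Let $\cC$ be a $[k \times (n-k), (n-k)(k-\delta+1), \delta]$ MRD code, so its codewords are $k \times (n-k)$ matrices over $\F_q$. For each codeword $A \in \cC$, the lifting $[I_k ~ A]$ is a $k \times n$ matrix of full rank $k$ (the leading $I_k$ block guarantees the rows are independent), so its row space is a genuine $k$-dimensional subspace of $\F_q^n$; hence every lifted codeword lies in $\cG_q(n,k)$, which settles the claim that $\CMRD$ is a constant dimension code of dimension $k$ in ambient space $\F_q^n$. For the size, I would argue that the lifting map $A \mapsto \operatorname{rowspace}[I_k ~ A]$ is injective: if $[I_k ~ A]$ and $[I_k ~ B]$ have the same row space, then since both are in the reduced echelon form forced by the identity block, they must be equal as matrices, so $A = B$. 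Therefore $|\CMRD| = |\cC| = q^{(n-k)(k-\delta+1)}$, using the bound \eqref{eq:MRD} at equality, $\varrho = (n-k)(k-\delta+1)$, which holds because $\cC$ is MRD with $\ell = n-k$.

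The heart of the argument, and the step I expect to be the main obstacle, is the distance computation: I must show that for distinct $A, B \in \cC$ the subspace distance between their lifts equals $2\,d_R(A,B)$, so that the minimum subspace distance is $2\delta$. Write $U = \operatorname{rowspace}[I_k ~ A]$ and $V = \operatorname{rowspace}[I_k ~ B]$. By definition $d_S(U,V) = 2k - 2\dim(U \cap V)$, so it suffices to prove $\dim(U \cap V) = k - \operatorname{rank}(A - B)$. The clean way to see this is to compute $\dim(U + V)$ from the stacked $2k \times n$ matrix whose top block is $[I_k ~ A]$ and whose bottom block is $[I_k ~ B]$; subtracting the top block of rows from the bottom (a rank-preserving row operation) turns the bottom block into $[0 ~ B - A]$, and the rank of this transformed matrix is $k + \operatorname{rank}(B - A)$. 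Hence $\dim(U + V) = k + \operatorname{rank}(A-B)$, and by the dimension formula $\dim(U \cap V) = \dim U + \dim V - \dim(U+V) = 2k - (k + \operatorname{rank}(A-B)) = k - \operatorname{rank}(A-B)$, which gives $d_S(U,V) = 2\operatorname{rank}(A-B) = 2\,d_R(A,B)$ exactly.

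Finally I would assemble the pieces. Taking the minimum over distinct pairs, $\min_{U \neq V} d_S(U,V) = 2 \min_{A \neq B} d_R(A,B) = 2\delta$ since $\cC$ has minimum rank distance $\delta$. Combined with the dimension and cardinality computations above, this shows $\CMRD$ is an $(n, q^{(n-k)(k-\delta+1)}, 2\delta, k)_q$ constant dimension code, completing the proof. The only genuinely nontrivial point is the row-reduction identity $\dim(U+V) = k + \operatorname{rank}(A-B)$; everything else is bookkeeping, so I would present that computation carefully and treat the rest briefly.
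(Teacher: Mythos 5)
Your proof is correct. The paper offers no proof of this statement---it is quoted directly from~\cite{SKK08}---and your argument is precisely the standard one given in that reference: the lifting $A \mapsto \operatorname{rowspace}[I_k ~ A]$ is injective (by the forced echelon form), and the row-reduction computation $\dim(U+V) = k + \operatorname{rank}(A-B)$ shows the lifting doubles distances, $d_S(U,V) = 2\,d_R(A,B)$, so the minimum subspace distance is exactly $2\delta$.
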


\begin{remark}
The parameters of the $[k \times (n-k), (n-k)(k-\delta +1),\delta
]$ MRD code $\cC$ in Theorem~\ref{trm:param lifted MRD} imply
that $k \leq n-k$, by (\ref{eq:MRD}).
\end{remark}

In the sequel we will assume that $q=2$,
even so some of the results can be generalized for
general $q$, where $q$ is a power of a prime number.

A \emph{subspace transversal  design} of groupsize $2^{n-k}$,
block dimension $k$, and \emph{strength}~$t$, denoted by
$\text{STD} (t, k, n-k)$, is a triple
$(\mathbb{V}^{(n,k)},\mathbb{G},\mathbb{B})$, where
$\mathbb{V}^{(n,k)}$ is a set of \emph{points},
$\mathbb{G}$ is a set of groups, and $\mathbb{B}$ is
a set of blocks. These three sets must satisfy the following five
properties:

\begin{enumerate}
\item $\mathbb{V}^{(n,k)}$ is the set of all vectors from $\F_2^n$,
which do not start with $k$ {\it zeroes};
$|\mathbb{V}^{(n,k)}|= (2^k-1) 2^{n-k}$ (the \emph{points});

\item $\mathbb{G}$ is a partition of $\mathbb{V}^{(n,k)}$ into
$2^k-1$ classes of size $2^{n-k}$ (the \emph{groups});

\item $\mathbb{B}$ is a collection of $k$-dimensional
subspaces of $\F_2^n$ which contain nonzero vectors
only from $\mathbb{V}^{(n,k)}$ (the
\emph{blocks});

\item each block meets each group in exactly one point;

\item every $t$-dimensional subspace (with points from $\mathbb{V}^{(n,k)}$) which
meets each group in at most one point is contained in exactly
one block.
\end{enumerate}

Let $\V_x^{(n,\ell)}$, $x \in \F_2^{\ell}$ denote the set of all vectors
from $\F_2^n$ which start with the binary vector~$x$ of length $\ell$.
Clearly, $\V_0^{(n,\ell)}$ is isomorphic to $\F_2^{n-\ell}$,
$\V^{(n,\ell)} \cup V_0^{(n,\ell)}$ is isomorphic to $\F_2^{n}$,
and $\V_0^{(n,\ell)} \cup
\V_x^{(n,\ell)}$ is isomorphic to~$\F_2^{n-\ell+1}$.
%The value of $\ell$ should be understood
%from the context.

\begin{theorem}\cite{EtSi11}\label{thm:STD}
The codewords of an $(n,2^{(n-k)(k-\delta+1)},2\delta,k)$ code
$\C^{\textmd{MRD}}$ form the blocks of a
$\text{STD} (k-\delta+1,k, n-k)$, with the set of points
$\mathbb{V}^{(n,k)}$ and the set of groups $\mathbb{V}^{(n,k)}_x$,
$x\in \F_2^k$.
\end{theorem}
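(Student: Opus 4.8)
The plan is to verify the five defining properties of a subspace transversal design in turn, with the understanding that all but the last are essentially bookkeeping while the strength condition (Property~5) is where the MRD structure does the real work. Throughout, write a codeword of $\CMRD$ as the row space $W_A \deff \{ (u, uA) : u \in \F_2^k \}$ of the lifted matrix $[I_k~A]$, where $A \in \cC$ is a $k \times (n-k)$ matrix codeword, and let $\pi : \F_2^n \to \F_2^k$ be the projection onto the first $k$ coordinates. Properties~1 and~2 hold by the very definitions of $\mathbb{V}^{(n,k)}$ and of the groups $\mathbb{V}^{(n,k)}_x$: the nonzero prefixes $x \in \F_2^k$ partition $\mathbb{V}^{(n,k)}$ into $2^k-1$ classes of size $2^{n-k}$.

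For Property~3, each $W_A$ has dimension $k$ because $[I_k~A]$ has rank $k$. A vector of $W_A$ has the form $(u, uA)$, whose length-$k$ prefix is exactly $u$; hence a nonzero vector of $W_A$ has a nonzero prefix and therefore lies in $\mathbb{V}^{(n,k)}$. For Property~4, the elements of $W_A$ lying in the group $\mathbb{V}^{(n,k)}_x$ are precisely those with $u = x$, giving the single point $(x, xA)$; thus $W_A$ meets every group in exactly one point.

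It remains to prove Property~5, which is the main obstacle. Let $U$ be a $t$-dimensional subspace, $t = k - \delta + 1$, all of whose nonzero vectors lie in $\mathbb{V}^{(n,k)}$ and which meets each group in at most one point. These two conditions say exactly that no nonzero vector of $U$ has zero prefix and that no two vectors of $U$ share a prefix; equivalently, $\pi$ restricted to $U$ is injective. Consequently $\pi(U)$ is a $t$-dimensional subspace of $\F_2^k$ and $U$ is the graph $\{ (v, f(v)) : v \in \pi(U) \}$ of a linear map $f : \pi(U) \to \F_2^{n-k}$. A block $W_A$ contains $U$ if and only if $vA = f(v)$ for every $v \in \pi(U)$, so I must show that exactly one codeword $A \in \cC$ satisfies this system.

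Fix a basis $v_1, \dots, v_t$ of $\pi(U)$ and consider the evaluation map $\cC \to (\F_2^{n-k})^t$, $A \mapsto (v_1 A, \dots, v_t A)$; the requirement above is that this map attain the prescribed value $(f(v_1), \dots, f(v_t))$. The source $\cC$ has dimension $(n-k)(k-\delta+1)$ and the target has dimension $t(n-k) = (k-\delta+1)(n-k)$, so it suffices to prove the map injective. If $A, B \in \cC$ evaluate identically then $C \deff A - B \in \cC$ satisfies $vC = 0$ for all $v \in \pi(U)$; the left null space of $C$ then contains the $t$-dimensional space $\pi(U)$, forcing $\text{rank}(C) \leq k - t = \delta - 1$. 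Since $\cC$ is linear with minimum rank distance $\delta$, every nonzero codeword has rank at least $\delta$; as $\text{rank}(C) \leq \delta - 1$, we conclude $C = 0$ and $A = B$. Injectivity together with the equality of dimensions makes the evaluation map a bijection, yielding a unique $A \in \cC$ with $U \subseteq W_A$. This establishes Property~5 and completes the verification.
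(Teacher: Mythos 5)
Your proof is correct. One preliminary remark: the paper you were working against does not actually prove Theorem~\ref{thm:STD} --- it is quoted from~\cite{EtSi11} --- so the relevant comparison is with the argument in that reference. There, Properties 1--4 are verified exactly as you do, but Property 5 is split into two halves proved by different means: uniqueness follows from the \emph{subspace} distance of the lifted code (Theorem~\ref{trm:param lifted MRD}), since minimum distance $2\delta$ forces any two blocks to intersect in dimension at most $k-\delta = t-1$, so a $t$-dimensional subspace lies in at most one block; existence then comes from a counting identity, namely that the number of $t$-dimensional subspaces whose nonzero points lie in $\mathbb{V}^{(n,k)}$ and which meet each group at most once equals $2^{(n-k)t}$ times the number of $t$-dimensional subspaces of a single block, so ``at most one'' upgrades to ``exactly one.'' Your route collapses both halves into a single linear-algebra step: the evaluation map $A \mapsto (v_1A,\dots,v_tA)$ from $\cC$ to $(\F_2^{n-k})^t$ is injective because a nonzero difference $C$ annihilated on the left by a $t$-dimensional space would have $\mathrm{rank}(C)\leq k-t=\delta-1$, contradicting minimum rank distance $\delta$; equality of $\F_2$-dimensions, $(n-k)(k-\delta+1)=t(n-k)$, then makes it a bijection, so existence and uniqueness of the covering block arrive simultaneously. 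What your approach buys is a shorter, self-contained argument that works directly from the rank metric of the underlying MRD code and never invokes the subspace distance of $\CMRD$; what the counting proof buys in exchange is the explicit combinatorial fact that the qualifying $t$-dimensional subspaces are exactly partitioned among the blocks, which is sometimes useful in its own right. Two trivial points worth tidying: the groups are of course $\mathbb{V}^{(n,k)}_x$ for \emph{nonzero} $x$ (the theorem's phrasing ``$x\in\F_2^k$'' is loose, and you handle this implicitly), and your observation that ``meets each group at most once'' plus ``points in $\mathbb{V}^{(n,k)}$'' is equivalent to injectivity of the projection $\pi$ on $U$ is exactly right, since two vectors sharing a prefix differ by a vector with zero prefix.
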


By Theorem~\ref{thm:STD} we have that if the code $\CMRD$ is part of a
$q$-covering design $\C_2(n,k,r)$, $r=k-\delta+1$, then each
$r$-dimensional subspace $X$, for which $\dim (X \cap \V^{(n,k)}_0)=0$, meets each group
of the corresponding subspace transversal design in at most
one point and thus, is
contained in an element of $\CMRD$. Hence, if $\CMRD$ is part of the
final minimal $q$-covering design $\C$ then for each element
$Z \in \C$ not in $\CMRD$ we must have $\dim (Z \cap \V^{(n,k)}_0)>0$.

\subsection{Partition of $\cG_2(4,2)$ into disjoint spreads}
\label{sec:dis_spread}

The description in this subsection will be very specific in its
parameters, although the known results presented are generalized
for other parameters. But, we have restricted ourself only for
those parameters which are needed in the sequel.

A $k$-\emph{spread} $\dS$ in $\cG_q(n,k)$ (or $\F_q^n$) is a set of
$k$-dimensional subspaces of $\F_q^n$, for which each one-dimensional
subspaces of $\cG_q(n,1)$ is contained in exactly one element of
$\dS$. Clearly, $\dS$ is a Steiner structure $\dS_q(1,k,n)$ and it
exists if and only if $k$ divides $n$.

There are $35=\sbinomtwo{4}{2}$ two-dimensional subspaces in
$\F_2^4$. These subspaces can be partitioned into seven 2-spreads
$\dP_0,~\dP_1, \ldots, \dP_6$ each one of size 5~\cite{Beu74,ZZS}.
This is well known and it relates to the well known fifteen
schoolgirls problem~\cite{AbFu96}, to the disjoint translates of
the Preparata code~\cite{ZZS}, and it is a 2-parallelism of
$\cG_2(n,2)$, $n$ even, which is also generalized for $q>2$~\cite{Beu74}. The
properties which follow are also well known.

Each subspace of $\dP_i$, $0 \leq i \leq 6$, partitions $\F_2^4$
into four additive cosets of itself. Consider the set of all
such cosets for $\dP_i$, namely:~
$$
P_i  \deff  \Bigl\{ \{\uuu,\uuu + \vvv_1,\uuu + \vvv_2, \uuu
+ \vvv_3\} ~:~ \{ {\bf 0},\vvv_1,\vvv_2,\vvv_3\} \in \dP_i, ~\uuu
\,{\in}\, \F_2^4 \Bigr\}.
$$
The set $P_i$ will be called a {\it spread translate}. Since the size
of a 2-spread in $\F_2^4$ is 5, and each 2-dimensional subspaces of $\F_2^4$
has 4 cosets it follows that $P_i$ consists of 20 distinct 4-subsets for each $i$,
$0 \leq i \leq 6$. These 20 subsets are partitioned into
5 parallel classes of size 4. Each parallel class contains the cosets of a different
2-dimensional subspace of $\dP_i$.

\begin{lemma}
\label{lem:Ppairs}
For any given $i$, $0 \leq i \leq 6$, each pair
$\{ \uuu,\vvv \} \subset \F_2^4$, $\uuu \neq \vvv$, appears in exactly one element of $P_i$.
\end{lemma}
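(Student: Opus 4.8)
The plan is to exploit the defining property of the spread $\dP_i$ together with the elementary fact that, over $\F_2$, two vectors lie in a common additive coset of a subspace precisely when their sum lies in that subspace. First I would fix $i$ and an arbitrary pair $\{\uuu,\vvv\} \subset \F_2^4$ with $\uuu \neq \vvv$, and set $\www = \uuu + \vvv$, which is a nonzero vector of $\F_2^4$.

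For existence, since $\dP_i$ is a 2-spread, the nonzero vector $\www$ is contained in exactly one subspace $W \in \dP_i$. I would then consider the coset $\uuu + W$: it contains $\uuu$ trivially, and it contains $\vvv = \uuu + \www$ because $\www \in W$. Hence $\{\uuu,\vvv\}$ is a subset of this coset, which is by definition an element of $P_i$, so the pair appears in at least one element of $P_i$.

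For uniqueness, suppose $\{\uuu,\vvv\}$ is contained in a coset $\uuu' + W'$ with $W' \in \dP_i$. Any two elements of a single coset of $W'$ differ (over $\F_2$, sum) to an element of $W'$, so $\www = \uuu + \vvv \in W'$. Because $\dP_i$ is a spread, the only member of $\dP_i$ containing the nonzero vector $\www$ is $W$, forcing $W' = W$; and since $\uuu$ lies in the coset $\uuu' + W = \uuu' + W'$, that coset must coincide with $\uuu + W$. Thus the pair occurs in exactly one element of $P_i$.

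The argument is a direct structural one and presents no real obstacle; the only point requiring care is the appeal to the spread property, which is exactly what guarantees the uniqueness of the subspace $W \ni \www$. As a consistency check I would also record the double count: each of the $20$ cosets in $P_i$ holds $\binom{4}{2}=6$ pairs, giving $120$ incidences, exactly matching the $\binom{16}{2}=120$ pairs of $\F_2^4$; combined with the ``at most once'' half of the argument above, this reproves the claim purely by counting.
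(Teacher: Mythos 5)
Your proof is correct and complete. The paper itself states Lemma~\ref{lem:Ppairs} without proof (it is introduced among properties declared ``well known''), and your argument --- reducing membership of the pair $\{\uuu,\vvv\}$ in a coset of $W' \in \dP_i$ to the condition $\uuu+\vvv \in W'$, then invoking the defining property of the spread to pin down $W'$ uniquely and the coset partition to pin down the coset --- is exactly the standard argument one would supply. Your counting cross-check is also accurate: $20 \cdot \binom{4}{2} = 120 = \binom{16}{2}$, so ``at most once'' plus the count already forces ``exactly once.''
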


\begin{lemma}
\label{lem:Psubspace}
For any given $i$, $0 \leq i \leq 6$ and an element $\{ a_1,a_2,a_3,a_4 \} \in P_i$, the subspace
$\Span{\{ (z,a_1),(z,a_2),(z,a_3),(z,a_4) \}}$, $z \in \F_2^\ell$, $\ell \in \N$,
is a 3-dimensional subspace
defined on $\V_0^{(\ell+4,\ell)} \cup \V_z^{(\ell+4,\ell)}$.
\end{lemma}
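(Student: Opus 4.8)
The plan is to exploit the coset structure of $\{a_1,a_2,a_3,a_4\}$ directly. By the definition of the spread translate $P_i$, this $4$-set is a coset $\uuu+W$ of a $2$-dimensional subspace $W=\{\mathbf{0},\vvv_1,\vvv_2,\vvv_3\}\in\dP_i$, where $\vvv_3=\vvv_1+\vvv_2$. First I would relabel so that $a_1=\uuu$, $a_2=\uuu+\vvv_1$, $a_3=\uuu+\vvv_2$, $a_4=\uuu+\vvv_3$, and write $b_j\deff(z,a_j)\in\F_2^{\ell+4}$. The key point is that in characteristic~$2$ the common prefix $z$ cancels under pairwise addition: $b_i+b_j=(\mathbf{0},a_i+a_j)$ with $a_i+a_j\in W$. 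In particular $a_1+a_2+a_3=\uuu+\vvv_1+\vvv_2=\uuu+\vvv_3=a_4$, so $b_4=b_1+b_2+b_3$; hence $\Span{\{b_1,b_2,b_3,b_4\}}=\Span{\{b_1,b_2,b_3\}}$ and the dimension is at most $3$.

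To see the dimension is exactly $3$, I would enumerate the eight subset-sums of $b_1,b_2,b_3$. The four sums of an even number of them are $\mathbf{0}$ and $(\mathbf{0},\vvv_1),(\mathbf{0},\vvv_2),(\mathbf{0},\vvv_3)$, all with prefix $\mathbf{0}$; the four sums of an odd number are $b_1,b_2,b_3$ and $b_1+b_2+b_3=b_4$, all with prefix $z$. Since $\mathbf{0},\vvv_1,\vvv_2,\vvv_3$ are the four distinct elements of $W$ and $a_1,\dots,a_4$ are distinct, the vectors within each prefix-class are distinct; and as the two prefixes $\mathbf{0}$ and $z$ differ, the two classes are disjoint. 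Thus all eight vectors are pairwise distinct, $b_1,b_2,b_3$ are linearly independent, and the span has dimension $3$.

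The same enumeration settles the containment: every vector in the span has its length-$\ell$ prefix equal to $\mathbf{0}$ or to $z$, so the subspace lies entirely in $\V_0^{(\ell+4,\ell)}\cup\V_z^{(\ell+4,\ell)}$, the claimed ambient space (which by the earlier remark is isomorphic to $\F_2^{5}$). The one step needing care — and the only genuine obstacle — is the disjointness of the two prefix-classes, which relies on $z\neq\mathbf{0}$; for $z=\mathbf{0}$ the two layers coincide and a coset through the origin would span only a $2$-dimensional space. I would therefore record the lemma for nonzero $z$, which is exactly the situation arising in the constructions that use it.
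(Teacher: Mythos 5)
Your proof is correct. Note that the paper itself offers no argument for this lemma: it is stated in Subsection~3.2 among properties declared ``well known,'' so there is no authorial proof to compare against; your coset-cancellation verification is the natural one and is complete. The core computations all check out: writing the $4$-set as $\uuu+W$ with $W=\{\mathbf{0},\vvv_1,\vvv_2,\vvv_3\}\in\dP_i$, the identity $a_1+a_2+a_3=a_4$ gives $b_4=b_1+b_2+b_3$, and your enumeration of the eight subset-sums simultaneously yields linear independence of $b_1,b_2,b_3$ and the containment of the span in $\V_0^{(\ell+4,\ell)}\cup\V_z^{(\ell+4,\ell)}$, since every even sum has prefix $\mathbf{0}$ and every odd sum has prefix $z$. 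Your closing caveat is a genuine sharpening of the statement as printed: the lemma as written quantifies over all $z\in\F_2^\ell$, yet for $z=\mathbf{0}$ it fails precisely when the $4$-set is one of the five subspaces of $\dP_i$ (which do belong to $P_i$, via $\uuu\in W$), where the span is only $2$-dimensional. The hypothesis $z\neq\mathbf{0}$ is left implicit in the paper, and is satisfied in every application (in Theorem~\ref{thm:b396} and Lemma~\ref{lem:P_V0} one has $z=\alpha^i$, a nonzero field element), so your restriction is exactly right; it is also worth observing, as your argument shows uniformly, that for $z\neq\mathbf{0}$ the conclusion holds even when the coset passes through the origin.
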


\begin{lemma}
\label{lem:P_V0} The set $$\bigl\{ \Span{\{
(z,a_1),(z,a_2),(z,a_3),(z,a_4) \}} \cap \V_0^{(\ell+4,4)} ~:~ \{
a_1,a_2,a_3,a_4 \} \in P_i ,~ 0 \leq i \leq 6 \bigr\},~z \in \F_2^\ell,~\ell \in \N~,$$ contains
all the 2-dimensional subspaces of $\V_0^{(\ell+4,\ell)}$ (which is isomorphic to $\F_2^4$).
\end{lemma}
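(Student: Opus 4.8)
The plan is to make the $3$-dimensional subspace $\Span{\{(z,a_1),(z,a_2),(z,a_3),(z,a_4)\}}$ from Lemma~\ref{lem:Psubspace} fully explicit and then read off its intersection with $\V_0^{(\ell+4,\ell)}$. Write $v_j = (z,a_j)$ for $j=1,2,3,4$. Since $\{a_1,a_2,a_3,a_4\}$ is a coset of some $2$-dimensional subspace $W$ of $\F_2^4$, the four vectors satisfy $a_1+a_2+a_3+a_4=0$, hence $v_1+v_2+v_3+v_4=(0,a_1+a_2+a_3+a_4)=0$. This is the only nontrivial relation among the $v_j$ (the span is $3$-dimensional by Lemma~\ref{lem:Psubspace}, so the relation space is one-dimensional over $\F_2$), and therefore its eight elements are $0$, the four vectors $v_j$, and the three pairwise sums $v_1+v_2,\,v_1+v_3,\,v_2+v_3$, every remaining pairwise sum coinciding with one of these because $v_4=v_1+v_2+v_3$.

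First I would separate these eight elements by their leading $\ell$ coordinates. Each $v_j=(z,a_j)$ has leading block $z$, while each pairwise sum $v_i+v_j=(0,a_i+a_j)$ has leading block $0$. Thus, for $z\neq 0$, the elements lying in $\V_0^{(\ell+4,\ell)}$ are exactly $0$ together with $(0,a_1+a_2),(0,a_1+a_3),(0,a_2+a_3)$, and these four vectors form a $2$-dimensional subspace, closure following from $(v_1+v_2)+(v_1+v_3)=v_2+v_3$. Identifying $\V_0^{(\ell+4,\ell)}$ with $\F_2^4$ by deleting the leading zeros, this subspace is $\{0,a_1+a_2,a_1+a_3,a_2+a_3\}$. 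Writing $a_1=\uuu$, $a_2=\uuu+\vvv_1$, $a_3=\uuu+\vvv_2$, $a_4=\uuu+\vvv_3$ with $W=\{0,\vvv_1,\vvv_2,\vvv_3\}$, the three sums become $\vvv_1,\vvv_2,\vvv_1+\vvv_2=\vvv_3$, so the intersection equals $W$ itself.

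It then remains to check that every $2$-dimensional subspace of $\F_2^4$ occurs as such a $W$. As $\{a_1,a_2,a_3,a_4\}$ ranges over $P_i$, its underlying subspace $W$ ranges over all five members of the spread $\dP_i$ (each arising from its four cosets); letting $i$ run from $0$ to $6$ then produces every subspace in $\dP_0\cup\cdots\cup\dP_6$. Since these seven spreads partition the $35=\sbinomtwo{4}{2}$ two-dimensional subspaces of $\F_2^4$, every such subspace arises, which is the claim. In particular, any prescribed $2$-dimensional subspace $W$ is realised directly by taking the trivial coset $\uuu=0$, i.e. $\{a_1,a_2,a_3,a_4\}=W$.

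The computation itself is routine; the one point that needs care is the role of $z$. The displayed intersection is a $2$-dimensional subspace only when $z\neq0$, for then exactly the even-weight combinations of the $v_j$ survive in $\V_0^{(\ell+4,\ell)}$, whereas when $z=0$ the entire $3$-dimensional span lies in $\V_0^{(\ell+4,\ell)}$ and contributes no $2$-dimensional subspace. Hence the statement is to be read with $z\neq0$ (equivalently $\ell\geq1$), and it is precisely this case that yields all $35$ subspaces.
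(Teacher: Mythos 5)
Your computation is correct, and it supplies something the paper never gives: Lemma~\ref{lem:P_V0} is stated (together with Lemmas~\ref{lem:Ppairs} and~\ref{lem:Psubspace}) as ``well known'' with no proof at all, so there is no authorial argument to compare against. Your route is the natural direct verification: the unique relation $v_1+v_2+v_3+v_4=0$ forced by the coset structure, the explicit list of the eight elements of the span, the observation that for $z\neq 0$ exactly the even-weight combinations $(0,a_i+a_j)$ land in $\V_0^{(\ell+4,\ell)}$, and the identification of the resulting $2$-dimensional subspace with the underlying subspace $W$ of the coset. The coverage step --- each $W\in\dP_i$ arises from its cosets in $P_i$, and $\dP_0,\dots,\dP_6$ partition all $35$ two-dimensional subspaces of $\F_2^4$ --- is precisely the mechanism that Theorem~\ref{thm:b396} exploits.

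One correction to your closing remark: the lemma does not in fact fail at $z=0$. Your claim that $z=0$ ``contributes no $2$-dimensional subspace'' rests on the span always being $3$-dimensional, but Lemma~\ref{lem:Psubspace} itself breaks down at $z=0$ exactly when the coset $\{a_1,a_2,a_3,a_4\}$ is the subspace $W$ itself (the choice $\uuu=0$, which is an element of $P_i$): then $\Span{\{(0,a_1),\dots,(0,a_4)\}}=\{0\}\times W$ is $2$-dimensional and its intersection with $\V_0^{(\ell+4,\ell)}$ is $W$. Since every $W\in\dP_i$ occurs as such a coset, the set in the lemma still contains all $35$ subspaces when $z=0$, merely alongside some extraneous $3$-dimensional intersections; the word ``contains'' saves the statement. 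This is a harmless edge case --- in the only application, Theorem~\ref{thm:b396}, one takes $z=\alpha^i\neq 0$ --- but your assertion that the statement \emph{must} be read with $z\neq 0$ is slightly too strong. The rest of the argument, including your implicit correction of the misprint $\V_0^{(\ell+4,4)}$ to $\V_0^{(\ell+4,\ell)}$ in the displayed set, is sound.
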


\section{On the Value of $\cC_2(n,k,2)$}
\label{sec:bnk2}

In this section we will consider lower and upper bounds on
$\cC_2(n,k,2)$. By Theorem~\ref{thm:n_spreads}, $\cC_q
(3m+\delta,2m+\delta,2) = \frac{q^{3m}-1}{q^m-1}$ for all $m \geq
2$ and $\delta \geq 0$. This implies a sequence of exact values of
$\cC_2(n,k,2)$. For values of $k$ not covered by this theorem we
will present one general construction and one specific one which
will be also used as a basis for another recursive construction.
We will also use the recursions implied by
Theorems~\ref{thm:lengthening} and~\ref{thm:recursiveC}.

In the sequel we will represent nonzero elements of the finite field
$\F_{2^n}$ in two different ways. The first one is by $n$-tuples over $\F_2$
(in other words, $\F_{2^n}$ is represented by $\F_2^n$)
and the second one is by powers of a primitive element $\alpha$ in $\F_{2^n}$.
We will not distinguish between these two isomorphic representations. When $n$-tuple $z$
over $\F_2$ will be multiplied by an element $\beta \in \F_{2^n}$ we
will view $z$ as an element in $\F_{2^n}$ and the result will be
an element in $\F_{2^n}$ which is also represented by an $n$-tuple
over $\F_2$ (an element in $\F_2^n$). Also, when we write
$\V_\gamma^{(n,\ell)}$, where $\gamma \in \F_{2^\ell}$, it is
the same as writing $\V_x^{(n,\ell)}$, $x \in \F_2^\ell$, where $x$
is the binary $\ell$-tuple which represents $\gamma$.

For a set $S \subseteq \F_2^n$ and a nonzero element $\beta\in \F_{2^n}$,
we define $\beta S \deff \{ \beta x ~:~ x \in S \}$. We
note that we can take the set $S$ to be a subspace. The following lemma is a
simple observation.
\begin{lemma}
\label{lem:multiplier}
If $X$ is a $k$-dimensional subspace of $\F_2^n$ and $\beta$ is a
nonzero element of $\F_2$ then $\beta X$ is also a
$k$-dimensional subspace of $\F_2^n$.
\end{lemma}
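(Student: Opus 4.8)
The plan is to recognize that, although the printed statement writes ``$\beta$ is a nonzero element of $\F_2$,'' the only nonzero element of $\F_2$ is $1$, giving $\beta X = X$ trivially; the intended object (as fixed by the definition $\beta S \deff \{\beta x : x \in S\}$ immediately preceding the lemma) is a nonzero $\beta \in \F_{2^n}$. So I would prove the statement for $\beta \in \F_{2^n}$, using the identification of $\F_2^n$ with $\F_{2^n}$ as $\F_2$-vector spaces that the paper has already set up. The core idea is that left multiplication by $\beta$ is an invertible $\F_2$-linear map on $\F_{2^n}$, and such maps carry $k$-dimensional $\F_2$-subspaces to $k$-dimensional $\F_2$-subspaces.

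Concretely, first I would introduce the multiplication map $m_\beta \colon \F_{2^n} \to \F_{2^n}$ given by $m_\beta(x) = \beta x$, and check that it is $\F_2$-linear. Additivity, $\beta(x+y) = \beta x + \beta y$, is exactly the distributive law in the field $\F_{2^n}$, and compatibility with $\F_2$-scalars is automatic since the scalars $0$ and $1$ act trivially. Second, since $\beta \neq 0$ it has a multiplicative inverse $\beta^{-1} \in \F_{2^n}$, and $m_{\beta^{-1}}$ is a two-sided inverse of $m_\beta$; hence $m_\beta$ is an $\F_2$-linear bijection of $\F_{2^n}$ onto itself. Third, because $\beta X = m_\beta(X)$ is the image of the $\F_2$-subspace $X$ under an injective $\F_2$-linear map, it is itself an $\F_2$-subspace, and the restriction $m_\beta|_X$ is an isomorphism of $X$ onto $\beta X$, so $\dim_{\F_2}(\beta X) = \dim_{\F_2}(X) = k$. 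Transporting back through the identification $\F_{2^n} \cong \F_2^n$ then gives that $\beta X$ is a $k$-dimensional subspace of $\F_2^n$, as claimed.

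There is no genuine obstacle here; the lemma is flagged in the text itself as ``a simple observation.'' The only point requiring any care is the bookkeeping between the field picture $\F_{2^n}$, where the product $\beta x$ lives, and the vector-space picture $\F_2^n$, where the subspace $X$ is stated to live; this is handled by the paper's convention, declared just above the lemma, that these two representations are used interchangeably and the product of an $n$-tuple with $\beta \in \F_{2^n}$ is read off in $\F_{2^n}$ and then regarded again as an element of $\F_2^n$. Once that convention is invoked, the argument above is immediate.
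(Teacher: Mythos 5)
Your proof is correct, and since the paper offers no proof at all (the lemma is stated as ``a simple observation'' following the definition of $\beta S$), your argument via the invertible $\F_2$-linear map $m_\beta$ is exactly the reasoning the paper leaves implicit. You were also right to read $\beta$ as a nonzero element of $\F_{2^n}$ rather than $\F_2$, consistent with the definition immediately preceding the lemma and with how the lemma is used (e.g., with $\beta = \alpha^j$ in the proof of Theorem~\ref{thm:simpleCMRD}).
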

Another important and simple observation is the following result.
\begin{lemma}
\label{lem:dis_subspaces}
Let $\alpha$ be a primitive element in $\F_{2^k}$, $X$ be an $r$-dimensional subspace
of $\F_2^k$. If $2^k-1$ and $2^r-1$ are relatively primes then the set
$\{ \alpha^j X ~:~ 0 \leq j \leq 2^k-2 \}$ contains $2^k-1$ distinct $r$-dimensional subspaces.
\end{lemma}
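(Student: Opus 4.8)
The plan is to read the set $\{\alpha^j X : 0 \le j \le 2^k-2\}$ as the orbit of $X$ under the natural multiplicative action of the cyclic group $\F_{2^k}^* = \Span{\alpha}$ on the $r$-dimensional subspaces of $\F_2^k \cong \F_{2^k}$, and then to show that this action is \emph{free} at $X$, so that the orbit attains the full size $\abs{\F_{2^k}^*} = 2^k-1$.

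First I would record that each $\alpha^j X$ is genuinely an $r$-dimensional subspace: multiplication by the nonzero field element $\alpha^j$ is an $\F_2$-linear bijection of $\F_{2^k}$, hence carries $X$ to an $r$-dimensional subspace, which is exactly the content of Lemma~\ref{lem:multiplier}. Since $\alpha$ is primitive, $\{\alpha^j : 0 \le j \le 2^k-2\}$ is all of $\F_{2^k}^*$, so the set in the statement equals the orbit $\{\beta X : \beta \in \F_{2^k}^*\}$. By the orbit--stabilizer theorem the number of distinct subspaces it contains equals the index of the stabilizer $H \deff \{\beta \in \F_{2^k}^* : \beta X = X\}$, so it suffices to prove $\abs{H}=1$.

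The heart of the argument, and the step carrying the real content, is a double divisibility on $\abs{H}$. On one hand $H$ is a subgroup of $\F_{2^k}^*$, so by Lagrange $\abs{H}$ divides $2^k-1$. On the other hand $H$ acts on the set $X \setminus \{0\}$ of the $2^r-1$ nonzero vectors of $X$: for $\beta \in H$ and $x \in X\setminus\{0\}$ we have $\beta x \in \beta X = X$, and $\beta x \neq 0$ since $\F_{2^k}$ has no zero divisors. This action is free, because $\beta x = x$ with $x \neq 0$ forces $\beta = 1$; hence every $H$-orbit on $X\setminus\{0\}$ has size exactly $\abs{H}$, and therefore $\abs{H}$ divides $\abs{X \setminus \{0\}} = 2^r-1$. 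Thus $\abs{H}$ divides $\gcd(2^k-1,\,2^r-1)$, which is $1$ by hypothesis, so $\abs{H}=1$.

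Combining these, the orbit has $(2^k-1)/\abs{H} = 2^k-1$ distinct members, as claimed. I do not foresee a serious obstacle beyond setting up the free action cleanly; the one point to keep in mind is that the coprimality hypothesis is precisely what collapses the two divisibility constraints. Indeed a nontrivial stabilizer would force $X$ to be invariant under the multiplicative group of a nontrivial subfield of $\F_{2^k}$ (so that $\abs{H}=2^d-1$ with $d\mid\gcd(k,r)$, $d>1$), which is exactly the situation the hypothesis $\gcd(2^k-1,2^r-1)=1$ rules out.
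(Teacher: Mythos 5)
Your proof is correct, and in fact the paper offers no proof of this lemma at all---it is introduced as a ``simple observation''---so there is no authorial argument for yours to diverge from. Your orbit--stabilizer argument is precisely the standard proof one would supply: the stabilizer $H=\{\beta\in\F_{2^k}^*:\beta X=X\}$ has order dividing $2^k-1$ by Lagrange and dividing $2^r-1$ because it acts freely on the $2^r-1$ nonzero vectors of $X$ (in a field $\beta x=x$ with $x\neq 0$ forces $\beta=1$), so coprimality gives $|H|=1$ and a full orbit of size $2^k-1$; your closing remark is also sound, since $H\cup\{0\}$ is closed under addition and multiplication, hence is a subfield $\F_{2^d}$ with $d\mid k$ over which $X$ is a vector space, giving $d\mid\gcd(k,r)$.
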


For $k \geq 3$ we define the following 6 sets of size $2^{k-1}$
in $\F_2^k$. $B_1$ contains all the elements of $\F_2^k$ which
start with a {\it zero}; $B_2$ contains all the elements of
$\F_2^k$ which start with an {\it one}; $B_3$ contains all the
elements of $\F_2^k$ which start with 00 or 10; $B_4$ contains
all the elements of $\F_2^k$ which start with 01 or 11; $B_5$
contains all the elements of $\F_2^k$ which start with 00 or~11;
$B_6$ contains all the elements of $\F_2^k$ which start with 01
or 10. The following two lemmas are readily verified.

\begin{lemma}
\label{lem:pairsB} If $\{ x,y \}$ is a pair of elements from
$\F_2^k$ then there exists at least one $i$, $1 \leq i \leq 6$,
such that $\{ x,y \} \subset B_i$.
\end{lemma}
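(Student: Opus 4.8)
The plan is to reduce the statement to a fact about pairs of points in $\F_2^2$, since whether a vector lies in any $B_i$ depends only on its first two coordinates. First I would record exactly how each $B_i$ is pinned down by the leading pair of bits: writing a vector's first two entries as $(a,b)$, the six defining conditions for $B_1,\dots,B_6$ are $a=0$, $a=1$, $b=0$, $b=1$, $a+b=0$, and $a+b=1$, respectively. Equivalently, each $B_i$ is the set of all vectors in $\F_2^k$ whose leading pair $(a,b)$ lies in a prescribed two-element subset $S_i$ of $\F_2^2=\{00,01,10,11\}$, namely $S_1=\{00,01\}$, $S_2=\{10,11\}$, $S_3=\{00,10\}$, $S_4=\{01,11\}$, $S_5=\{00,11\}$, and $S_6=\{01,10\}$.

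Next I would observe that these six subsets $S_1,\dots,S_6$ are precisely all $\binom{4}{2}=6$ two-element subsets of the four-element set $\F_2^2$. This is the crux of the argument, though it amounts only to inspection of the list: every unordered pair of distinct elements of $\F_2^2$ occurs exactly once among the $S_i$. I would also note, for the degenerate case below, that each single element of $\F_2^2$ lies in exactly three of these six subsets.

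Given a pair $\{x,y\}\subset\F_2^k$ with $x\neq y$, let $x'$ and $y'$ denote the leading pairs of $x$ and $y$ in $\F_2^2$. Then $\{x,y\}\subset B_i$ holds if and only if $\{x',y'\}\subseteq S_i$, so it suffices to find $S_i$ containing both $x'$ and $y'$. If $x'\neq y'$, then $\{x',y'\}$ is itself one of the six two-element subsets and hence equals some $S_i$, yielding the desired index. If instead $x'=y'$, which can occur because $x$ and $y$ may differ only beyond the first two coordinates, it is enough to find any $S_i$ containing the single point $x'$, and such an $i$ exists since $x'$ lies in three of the subsets. In both cases an appropriate $B_i$ is produced, completing the proof. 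The only point requiring care is this last degenerate case, where the two vectors share a leading pair; isolating it is what makes the statement hold uniformly for all $k\geq 3$.
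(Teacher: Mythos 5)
Your proof is correct: the paper gives no argument at all for this lemma (it is dismissed as ``readily verified''), and your reduction to the observation that the leading-bit sets $S_1,\dots,S_6$ are exactly the six $2$-element subsets of $\F_2^2$ is precisely the intended verification. Your explicit treatment of the degenerate case $x'=y'$ (each element of $\F_2^2$ lies in three of the $S_i$) is a welcome bit of care that the paper leaves implicit.
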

\begin{lemma}
\label{lem:k_subspaceB} For any given $i$, $1 \leq i \leq 6$, and
$z \in \F_2^k \setminus \{ {\bf 0} \}$, the subspace
$\Span{\{ (z,x) ~:~ x \in B_i
\}}$, is a $k$-dimensional subspace of $\F_2^{2k}$.
\end{lemma}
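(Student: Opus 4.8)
The plan is to verify directly that $\Span{\{(z,x) : x \in B_i\}}$ has dimension exactly $k$ for each fixed $i$ and each nonzero $z \in \F_2^k$. First I would observe that each $B_i$ is by construction a coset-union determined by conditions on the leading one or two coordinates, and in every case $B_i$ consists of exactly $2^{k-1}$ elements of $\F_2^k$ that form an affine subspace (a hyperplane or a translate thereof) of $\F_2^k$. Concretely, $B_1$ is the linear hyperplane of vectors beginning with $0$; $B_2$ is its nontrivial coset; $B_3$ is the hyperplane of vectors with second coordinate $0$; $B_5$ is the hyperplane of vectors whose first two coordinates are equal; and $B_4, B_6$ are the respective cosets. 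So in each case $B_i = b_i + H_i$ where $H_i$ is a $(k-1)$-dimensional linear subspace of $\F_2^k$ and $b_i \in \{\mathbf{0}\} \cup B_i$.

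Next I would pass to the lifted vectors. The set $\{(z,x) : x \in B_i\} \subseteq \F_2^{2k}$ is the image of $B_i$ under the affine-linear map $x \mapsto (z,x)$, with $z$ fixed and nonzero. The key computation is that the span of this image is the linear span of the $2^{k-1}$ vectors $(z,x)$, which (working over $\F_2$, where sums of these lifted vectors cancel the $z$-part in pairs) equals the set of all $(\epsilon z, w)$ where $w$ ranges over the affine span of $B_i$ together with appropriate $\F_2$-combinations. The cleanest route is to pick a basis: choose $x_0 \in B_i$ and a basis $h_1,\dots,h_{k-1}$ of the linear hyperplane $H_i$, so that $B_i = \{x_0 + \sum c_j h_j : c_j \in \F_2\}$. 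Then the lifted vectors $(z,x_0)$ and $(z, x_0+h_j) = (z,x_0)+(\mathbf{0},h_j)$ for $j=1,\dots,k-1$ together span the same space as $\{(z,x):x\in B_i\}$, and these $k$ vectors are linearly independent: the vectors $(\mathbf{0},h_j)$ span the $(k-1)$-dimensional space $\{\mathbf{0}\}\times H_i$, while $(z,x_0)$ lies outside it precisely because $z \neq \mathbf{0}$. Hence the span has dimension exactly $k$.

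The main thing to check carefully is that each $B_i$ really is an affine subspace of the stated dimension $k-1$ and that $z \neq \mathbf{0}$ is exactly the condition making $(z,x_0)$ independent of the $(\mathbf{0},h_j)$; this is where the hypothesis $z \in \F_2^k \setminus \{\mathbf{0}\}$ is used. I expect the only mild obstacle to be organizing the six cases uniformly: rather than treating each $B_i$ separately, I would state once that every $B_i$ is an affine hyperplane of $\F_2^k$ (of size $2^{k-1}=2^{(k-1)}$) and then run the single independence argument above, which applies verbatim to all six. Since the lemma is flagged as ``readily verified,'' a brief remark reducing all six cases to this common affine-hyperplane structure, followed by the basis-independence computation, should suffice.
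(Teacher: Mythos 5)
Your proof is correct, and since the paper itself offers no argument beyond declaring the lemma ``readily verified,'' your affine-hyperplane observation---each $B_i$ is of the form $x_0+H_i$ with $H_i$ a $(k-1)$-dimensional subspace of $\F_2^k$, so that over $\F_2$ the lifted set spans $\Span{\{(z,x_0)\}} \oplus (\{\mathbf{0}\}\times H_i)$, with $z\neq\mathbf{0}$ guaranteeing the extra dimension---is precisely the intended verification. The basis computation in your third paragraph is airtight and handles all six cases uniformly, which is exactly the right way to organize it.
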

Let $\alpha$ be a primitive element in $\F_{2^k}$ and assume that
the elements of $B_i$, $1 \leq i \leq 6$ are viewed as elements of
$\F_{2^k}$. For each $j$, $0 \leq j \leq 2^k-2$ we define the
following set with six subsets of $\V_{\alpha^j}^{(2k,k)}$ (the elements are
from $\F_2^{2k}$),
$$
S_j \deff \bigl\{ \{ (\alpha^j , \alpha^j x ) ~:~ x \in B_i \} ~:~
1 \leq i \leq 6 \bigr\}
$$
An immediate consequence of Lemmas~\ref{lem:multiplier},~\ref{lem:pairsB}, and~\ref{lem:k_subspaceB} is
the following lemma.
\begin{lemma}
\label{lem:2subsB} The set $\dS_j \deff \{ \Span{A} ~:~ A \in S_j
\}$ contains six $k$-dimensional subspaces of $\V_0^{(2k,k)} \cup \V_{\alpha^j}^{(2k,k)}$.
Each 2-dimensional subspace $X$ of $\V_0^{(2k,k)} \cup
\V_{\alpha^j}^{(2k,k)}$, for which $X$ is not contained in $\V_0^{(2k,k)}$,
is a subspace of at least one $k$-dimensional subspace of $\dS_j$.
\end{lemma}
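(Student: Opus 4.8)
The statement has two parts, and the plan is to reduce both to the three cited lemmas by exploiting a single block-wise scaling map. Write $W \deff \V_0^{(2k,k)} \cup \V_{\alpha^j}^{(2k,k)}$; by the observation preceding these lemmas this is a $(k+1)$-dimensional subspace of $\F_2^{2k}$, consisting of the vectors whose first $k$-coordinate block is $0$ or $\alpha^j$. I would define $\phi \colon \F_2^{2k} \to \F_2^{2k}$ by $\phi(u,v) = (\alpha^j u, \alpha^j v)$, where each block $u,v$ is read as an element of $\F_{2^k}$ and multiplied by $\alpha^j$. Applying Lemma~\ref{lem:multiplier} to each block shows $\phi$ is an $\F_2$-linear bijection, so it preserves dimensions of subspaces; moreover $\phi$ sends first block $1$ to $\alpha^j$, hence carries $\V_0^{(2k,k)} \cup \V_1^{(2k,k)}$ onto $W$.

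For the first assertion, I would observe that every $A \in S_j$ has the form $A = \{(\alpha^j, \alpha^j x) : x \in B_i\} = \phi\bigl(\{(1,x) : x \in B_i\}\bigr)$ for some $i$, so $\Span{A} = \phi\bigl(\Span{\{(1,x) : x \in B_i\}}\bigr)$. By Lemma~\ref{lem:k_subspaceB} (with $z=1$) the inner span is $k$-dimensional and lies in $\V_0^{(2k,k)} \cup \V_1^{(2k,k)}$, so applying the bijection $\phi$ yields a $k$-dimensional subspace inside $W$. Letting $i$ range over $1,\dots,6$ produces the six members of $\dS_j$, each $k$-dimensional; distinctness is immediate from the fact that the sets $B_i$ are pairwise distinct.

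For the second assertion, let $X$ be a $2$-dimensional subspace of $W$ with $X \not\subseteq \V_0^{(2k,k)}$. The key structural step is to show that $X$ is spanned by two vectors sharing first block $\alpha^j$. Projecting $W$ onto its first block gives an $\F_2$-linear map onto $\{0,\alpha^j\}$ with kernel $\V_0^{(2k,k)}$; since $X \not\subseteq \V_0^{(2k,k)}$ the restriction to $X$ is onto, so $X \cap \V_0^{(2k,k)}$ is $1$-dimensional and exactly two of the three nonzero vectors of $X$ lie in $\V_{\alpha^j}^{(2k,k)}$. Thus $X = \Span{\{(\alpha^j, a),(\alpha^j, b)\}}$ for distinct $a,b \in \F_{2^k}$.

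Finally I would de-scale and invoke the pair-covering lemma. Applying $\phi^{-1}$ gives $\phi^{-1}(X) = \Span{\{(1,a'),(1,b')\}}$ with $a' = \alpha^{-j}a$ and $b' = \alpha^{-j}b$, still distinct since multiplication by $\alpha^{-j}$ is injective. By Lemma~\ref{lem:pairsB} the pair $\{a',b'\}$ is contained in some $B_i$, whence $\phi^{-1}(X) \subseteq \Span{\{(1,x) : x \in B_i\}}$; pushing this inclusion forward through $\phi$ gives $X \subseteq \Span{A} \in \dS_j$ for the corresponding $A = \{(\alpha^j,\alpha^j x) : x \in B_i\}$, a $k$-dimensional subspace by the first part. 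The only step that is more than bookkeeping is this pair reduction: recognizing that a $2$-subspace meeting $\V_{\alpha^j}^{(2k,k)}$ is pinned down by a single pair after de-scaling, which is precisely what Lemma~\ref{lem:pairsB} was set up to handle; everything else is transport along the linear bijection $\phi$.
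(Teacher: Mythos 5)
Your proof is correct and follows exactly the route the paper intends: the paper states this lemma without proof as ``an immediate consequence'' of Lemmas~\ref{lem:multiplier}, \ref{lem:pairsB}, and~\ref{lem:k_subspaceB}, and your block-wise scaling map $\phi(u,v)=(\alpha^j u,\alpha^j v)$ is precisely the multiplication by $\alpha^j$ used in the proof of Theorem~\ref{thm:simpleCMRD}, with Lemma~\ref{lem:k_subspaceB} (at $z=1$) giving the dimension and Lemma~\ref{lem:pairsB} giving the pair covering after de-scaling. The only spot worth one extra line is the distinctness of the six spans: pairwise distinct generating sets do not by themselves give distinct spans, but a size count shows $\Span{A}\cap \V_{\alpha^j}^{(2k,k)}=A$, so each span determines its $B_i$.
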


\begin{theorem}
\label{thm:simpleCMRD}
$\cC_2(2k,k,2) \leq 2^{2k}+6 \cdot (2^k-1)$.
\end{theorem}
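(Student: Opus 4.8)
The plan is to construct an explicit $q$-covering design $\C_2(2k,k,2)$ whose size is $2^{2k} + 6\cdot(2^k-1)$ by combining a lifted MRD code with the auxiliary subspaces built in Lemma~\ref{lem:2subsB}. First I would take a $[k\times k, k(k-1)+k, k-1]$... wait, let me set the parameters precisely: I want $\delta$ so that $r = k-\delta+1 = 2$, i.e.\ $\delta = k-1$. By Theorem~\ref{trm:param lifted MRD}, a $[k\times (n-k),(n-k)(k-\delta+1),\delta]$ MRD code with $n=2k$, $\delta=k-1$ gives a $\CMRD$ that is a $(2k, 2^{k\cdot 2}, 2(k-1), k)$ code of size $2^{2k}$. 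These $2^{2k}$ lifted codewords form the first part of my covering.

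**Next I would account for the $r$-dimensional subspaces not covered by $\CMRD$.** By Theorem~\ref{thm:STD}, $\CMRD$ forms the blocks of an $\text{STD}(2,k,k)$, so every $2$-dimensional subspace $X$ with $\dim(X\cap \V_0^{(2k,k)})=0$ is already contained in some codeword of $\CMRD$. Hence the only $2$-subspaces I still must cover are those $X$ with $\dim(X\cap \V_0^{(2k,k)})>0$. These split into two types: those lying entirely in $\V_0^{(2k,k)}$, and those meeting $\V_0^{(2k,k)}$ in a $1$-dimensional subspace. The $2$-subspaces contained in $\V_0^{(2k,k)}\cong \F_2^k$ form a $\cG_2(k,2)$, which I cover using a copy of a $q$-covering design $\C_2(k,k,2)$—but in fact $\V_0^{(2k,k)}$ itself is a single $k$-dimensional subspace of $\F_2^{2k}$, so adjoining just that one subspace covers all $2$-subspaces inside it. The remaining $2$-subspaces $X$, those meeting $\V_0^{(2k,k)}$ in exactly a line and lying in some $\V_0^{(2k,k)}\cup \V_{\alpha^j}^{(2k,k)}$, are exactly the ones handled by Lemma~\ref{lem:2subsB}: for each $j$ with $0\le j\le 2^k-2$, the set $\dS_j$ contributes six $k$-subspaces so that every such $X$ (not contained in $\V_0^{(2k,k)}$) lies in at least one of them.

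**Then I would assemble the covering and count.** The final collection is $\CMRD$ together with the single subspace $\V_0^{(2k,k)}$ and the subspaces $\bigcup_{j=0}^{2^k-2}\dS_j$. I must check that every $2$-subspace of $\F_2^{2k}$ is covered: if $\dim(X\cap \V_0^{(2k,k)})=0$ it is in $\CMRD$; if $X\subseteq \V_0^{(2k,k)}$ it is in the adjoined subspace; otherwise $X$ meets $\V_0^{(2k,k)}$ in a line, so $X\subseteq \V_0^{(2k,k)}\cup \V_{\alpha^j}^{(2k,k)}$ for the value of $j$ determined by $X$'s projection, and Lemma~\ref{lem:2subsB} covers it. For the count: $\CMRD$ supplies $2^{2k}$ subspaces, the $\dS_j$ supply $6(2^k-1)$ subspaces over the $2^k-1$ values of $j$, and the term $\V_0^{(2k,k)}$ is a single extra subspace. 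This gives $2^{2k}+6(2^k-1)+1$, which slightly exceeds the claimed bound, so I expect the sharpening to come from absorbing $\V_0^{(2k,k)}$ into one of the $\dS_j$ or from noting that the six subspaces in $S_j$ built from $B_1,\dots,B_6$ already jointly cover the relevant lines, making the separate $\V_0^{(2k,k)}$ term unnecessary.

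**The hard part will be** verifying that every offending $2$-subspace is genuinely covered by the $\dS_j$ and confirming the exact arithmetic of the constant $6(2^k-1)$ rather than $6(2^k-1)+1$. Specifically I must confirm that Lemma~\ref{lem:2subsB}'s guarantee—covering each $2$-subspace of $\V_0^{(2k,k)}\cup \V_{\alpha^j}^{(2k,k)}$ not inside $\V_0^{(2k,k)}$—together with $\CMRD$ leaves \emph{exactly} the $2$-subspaces of $\V_0^{(2k,k)}$ uncovered, and then argue that these are already swept up without a dedicated $(+1)$ term. The combinatorial heart is therefore reconciling the three covering regimes (empty intersection, line intersection, full containment) so that their union is all of $\cG_2(2k,2)$ while the block count lands precisely on $2^{2k}+6(2^k-1)$.
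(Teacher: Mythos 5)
Your construction is exactly the paper's (the paper's covering is $\T \cup \bigcup_{j=0}^{2^k-2}\dS_j$ with no extra block), and your treatment of the first two regimes matches the paper's proof: Theorem~\ref{thm:STD} handles $\dim(X\cap\V_0^{(2k,k)})=0$ and Lemma~\ref{lem:2subsB} handles $\dim(X\cap\V_0^{(2k,k)})=1$. The genuine gap is the third regime, the $2$-subspaces contained in $\V_0^{(2k,k)}$: you cover them by adjoining $\V_0^{(2k,k)}$ as an extra block, which overshoots the bound by one, and then only speculate about how to remove it. Neither of your suggested fixes works as stated. You cannot ``absorb $\V_0^{(2k,k)}$ into one of the $\dS_j$,'' since every block of $\dS_j$ contains $2^{k-1}$ points of $\V_{\alpha^j}^{(2k,k)}$ and so is not contained in $\V_0^{(2k,k)}$. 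And for a \emph{fixed} $j$ the six blocks of $\dS_j$ do not jointly cover the $2$-subspaces of $\V_0^{(2k,k)}$: their intersections with $\V_0^{(2k,k)}$ are only \emph{three} distinct hyperplanes (the blocks built from $B_1$ and $B_2$ both meet $\V_0^{(2k,k)}$ in $\alpha^j\{x : x_1=0\}$, those from $B_3,B_4$ in $\alpha^j\{x : x_2=0\}$, those from $B_5,B_6$ in $\alpha^j\{x : x_1+x_2=0\}$, because $B_2,B_4,B_6$ are cosets of the subspaces $B_1,B_3,B_5$), and three hyperplanes of $\F_2^k$ miss most $2$-subspaces for $k\geq 3$.

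The missing idea is the multiplicative orbit argument via Lemma~\ref{lem:dis_subspaces}, which is how the paper closes this case without any extra block. Fix one block $Z\in\dS_0$ and set $Z_0=Z\cap\V_0^{(2k,k)}$, a $(k-1)$-dimensional subspace of $\V_0^{(2k,k)}\cong\F_2^k$. For each $j$, $\alpha^j Z\in\dS_j$ and $(\alpha^j Z)\cap\V_0^{(2k,k)}=\alpha^j Z_0$. Since $\gcd(2^k-1,2^{k-1}-1)=1$, Lemma~\ref{lem:dis_subspaces} shows that $\{\alpha^j Z_0 : 0\leq j\leq 2^k-2\}$ consists of $2^k-1$ \emph{distinct} $(k-1)$-dimensional subspaces, i.e., all hyperplanes of $\V_0^{(2k,k)}$. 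Every $2$-subspace of $\V_0^{(2k,k)}$ lies in some hyperplane, hence in some $\alpha^j Z$, which already belongs to $\bigcup_j\dS_j$; so the count lands exactly on $2^{2k}+6\cdot(2^k-1)$. Your write-up never invokes Lemma~\ref{lem:dis_subspaces}, and without it the bound you have actually proved is $2^{2k}+6\cdot(2^k-1)+1$.
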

\begin{proof}
Let $\CMRD$ be a $(2k,2^{2k},2(k-1),k)$ lifted MRD code, and let $\T$ be the
set of blocks in the
corresponding STD$(2,k,k)$. Define
$$
\C \deff \T \cup \bigcup_{j=0}^{2^k-2} \dS_j ~.
$$
We claim that $\C$, which contains $2^{2k}+6 \cdot (2^k-1)$
subspaces, is a $q$-covering design $\C_2 (2k,k,2)$.

By Theorem~\ref{thm:STD}, each 2-dimensional subspace $X$ of
$\F_2^{2k}$, for which $\dim ( X \cap \V_0^{(2k,k)} )=0$, is contained in
an element of $\T$. By Lemma~\ref{lem:2subsB}, each 2-dimensional subspace
$X$ of $\F_2^{2k}$, for which $\dim ( X \cap \V_0^{(2k,k)} ){=1}$, is
contained in an element of $\bigcup_{j=0}^{2^k-2} \dS_j$. Therefore, we only
have to prove that for each 2-dimensional subspace $X$ of $\V_0^{(2k,k)}$,
there exists a $k$-dimensional subspace $Y$ of $\C$ such that $X
\subset Y$.

Let $Z$ be a $k$-dimensional subspace such that $Z \in \C$ and $Z
\subset \V_0^{(2k,k)} \cup \V_1^{(2k,k)}$, where $1=\alpha^0$
(thus $Z \in \dS_0)$. $Z$ can be written as $Z_0 \cup Z_1$, where
$Z_0 \subset \V_0^{(2k,k)}$ and $Z_1 \subset \V_1^{(2k,k)}$. For each $j$, $0 \leq j
\leq 2^k-2$, $\alpha^j Z$ is also a $k$-dimensional subspace of
$\C$, and $\alpha^j Z \subset \V_0^{(2k,k)} \cup \V_{\alpha^j}^{(2k,k)}$.
Thus, $\alpha^j Z$ can be written as $Z_{0,j} \cup Z_{\alpha^j}$
($Z_{0,0}=Z_0$), where $Z_{0,j} \subset \V_0^{(2k,k)}$
and $Z_{\alpha^j} \subset \V_{\alpha^j}^{(2k,k)}$. Clearly,
$Z_0$ is a $(k-1)$-dimensional subspace of $\V_0^{(2k,k)}$. By
Lemma~\ref{lem:dis_subspaces} the set $\Z
\deff \{ Z_{0,j} ~:~ 0 \leq j \leq 2^k-2 \}$ contains $2^k-1$
distinct $(k-1)$-dimensional subspaces of $\V_0^{(2k,k)}$.
Since $\V_0^{(2k,k)}$ is a
$k$-dimensional subspace it contains $2^k-1$ distinct
$(k-1)$-dimensional subspaces. Hence, $\Z$ contains all the
$(k-1)$-dimensional subspaces of $\V_0^{(2k,k)}$. Each 2-dimensional
subspace of $\V_0^{(2k,k)}$ is a subspace of some $(k-1)$-dimensional
subspace of $\V_0^{(2k,k)}$ and since $\alpha^j Z \in \C$ the proof is
completed.
\end{proof}

\begin{theorem}
\label{thm:b396} $\cC_2(7,3,2) \leq 396$.
\end{theorem}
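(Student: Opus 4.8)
The plan is to realize the covering as a disjoint union of two families: a lifted MRD code (equivalently, an STD) that handles the lines disjoint from a fixed $4$-dimensional subspace, and a small family of spread-translate subspaces that handles the lines meeting it. Throughout I identify $\F_2^7$ with pairs $(z,b)$, $z\in\F_2^3$ a ``direction'' and $b\in\F_2^4$, so that the distinguished subspace $\V_0^{(7,3)}$ of vectors beginning with three zeroes is a copy of $\F_2^4$. The target count $396 = 256 + 140 = 2^8 + 7\cdot 20$ already hints at the split, and the plan is engineered so that the budget comes out exactly tight.

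First I take $\CMRD$ to be a $(7,2^8,4,3)$ lifted MRD code, which exists by Theorem~\ref{trm:param lifted MRD} with $\delta=2$, and let $\T$ be its $256$ blocks; by Theorem~\ref{thm:STD} these form an $\text{STD}(2,3,4)$ with groups $\V_x^{(7,3)}$. Following the discussion after Theorem~\ref{thm:STD}, two nonzero points of a $2$-dimensional subspace $X$ lie in a common group exactly when their sum lies in $\V_0^{(7,3)}$, so the strength-$2$ property of the STD covers precisely those $X$ with $\dim\bigl(X\cap\V_0^{(7,3)}\bigr)=0$. Thus $\T$ disposes of all lines disjoint from $\V_0^{(7,3)}$, and it remains to cover the $X$ with $\dim\bigl(X\cap\V_0^{(7,3)}\bigr)\ge 1$.

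These remaining subspaces are of two kinds: those contained in $\V_0^{(7,3)}$, and those meeting it in a single point. The plan is to fix a bijection $i$ between the seven nonzero directions $z\in\F_2^3$ and the seven spread translates $P_0,\dots,P_6$, and for each $z$ to adjoin the $20$ subspaces $\Span{\{(z,a_1),(z,a_2),(z,a_3),(z,a_4)\}}$ with $\{a_1,a_2,a_3,a_4\}\in P_{i(z)}$; each such subspace is $3$-dimensional by Lemma~\ref{lem:Psubspace}. This adds $7\cdot 20 = 140$ subspaces, giving $|\T| + 140 = 396$ in total, and I claim the resulting collection $\C$ is a $\C_2(7,3,2)$.

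The verification has two halves. If $X$ meets $\V_0^{(7,3)}$ in a single point, its projection to the first three coordinates has a unique nonzero value $z'$, and its two points outside $\V_0^{(7,3)}$ read $(z',p),(z',q)$ for a genuine pair $\{p,q\}\subset\F_2^4$; by Lemma~\ref{lem:Ppairs} this pair lies in exactly one coset $\{a_1,a_2,a_3,a_4\}\in P_{i(z')}$, whence $(z',p),(z',q)$, and therefore all of $X$, lie in $\Span{\{(z',a_1),(z',a_2),(z',a_3),(z',a_4)\}}$. If instead $X\subset\V_0^{(7,3)}$, I use that the intersection of each adjoined subspace with $\V_0^{(7,3)}$ is the $2$-dimensional space underlying its coset; as $z$ ranges over all seven directions and $i(z)$ over all seven translates, Lemma~\ref{lem:P_V0} guarantees these intersections exhaust all $\sbinomtwo{4}{2}=35$ two-dimensional subspaces of $\F_2^4$. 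The main obstacle is precisely this bookkeeping: per direction one needs a single translate to pair-cover the mixed lines (Lemma~\ref{lem:Ppairs}), while covering the lines inside $\V_0^{(7,3)}$ forces \emph{all} seven translates to occur, so the assignment must be a bijection and $140$ is neither wasteful nor insufficient.
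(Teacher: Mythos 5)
Your proof is correct and is essentially the paper's own argument: the same decomposition into a $(7,256,4,3)$ lifted MRD code (handling lines with $\dim(X\cap\V_0^{(7,3)})=0$ via Theorem~\ref{thm:STD}) plus $7\cdot 20=140$ spread-translate subspaces, one translate $P_i$ per nonzero direction, verified by the same three-case analysis using Lemmas~\ref{lem:Ppairs}, \ref{lem:Psubspace}, and~\ref{lem:P_V0}. Your arbitrary bijection $i(z)$ is just a cosmetic generalization of the paper's assignment of $P_i$ to the direction $\alpha^i$.
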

\begin{proof}
Let $\CMRD$ be a $(7,256,4,3)$ lifted MRD code, and let $\T$ be
the set of blocks in the corresponding STD$(2,3,4)$. Let $\alpha$ be a primitive
element in $\F_8$. Recall the definition of $P_i$, $0 \leq i \leq
6$, given in subsection~\ref{sec:dis_spread}. For each $0 \leq i
\leq 6$, we construct the following set of 3-dimensional subspaces
$$\B_{\alpha^i} \deff \bigl\{ \Span{\{ (\alpha^i,a_1),(\alpha^i,a_2),(\alpha^i,a_3),(\alpha^i,a_4) \}} ~:~
\{ a_1,a_2,a_3,a_4 \} \in P_i  \bigr\}$$
which are contained in $\V_0^{(7,3)} \cup
\V_{\alpha^i}^{(7,3)}$, where the elements of $\{
(\alpha^i,a_1),(\alpha^i,a_2),(\alpha^i,a_3),(\alpha^i,a_4) \}$,
${\{ a_1,a_2,a_3,a_4 \} \in P_i}$, are embedded on $\V_{\alpha^i}^{(7,3)}$.
Let
$$
\C \deff \T \cup \bigcup_{0 \leq i \leq 6} \B_{\alpha^i} ~.
$$
We claim that $\C$ is a $q$-covering design $\C_2 (7,3,2)$.

By Theorem~\ref{thm:STD}, each 2-dimensional subspace $X$ of
$\F_2^7$, for which $\dim ( X \cap \V_0^{(7,3)} )=0$, is contained in
an element of $\T$. By Lemmas~\ref{lem:Ppairs} and~\ref{lem:Psubspace}, each
2-dimensional subspace $X$ of $\F_2^7$, for which $\dim ( X \cap
\V_0^{(7,3)} )=1$ and $X \subset \V_0^{(7,3)} \cup \V_{\alpha^i}^{(7,3)}$, is contained in
at least one subspace of~$\B_{\alpha^i}$ and hence it is covered
by $\C$. By Lemma~\ref{lem:P_V0}, each 2-dimensional subspace $X$
of $\F_2^7$, for which $\dim ( X \cap \V_0^{(7,3)} )=2$ is contained in at
least one subspace of $\bigcup_{0 \leq i \leq 6} \B_{\alpha^i}$
and hence it is covered by $\C$.

Thus, $\C$ is a $q$-covering design
$\C_2(7,3,2)$. $\C$ contains
$256+7 \cdot 20=396$ subspaces and hence $\cC_2(7,3,2) \leq
396$.
\end{proof}

The constructions in the proofs of Theorems~\ref{thm:simpleCMRD}
and~\ref{thm:b396}, and other $q$-covering designs $\C_2(n,k,2)$, can
be used in a recursive construction implied by the following
theorem.
\begin{theorem}
\label{thm:improved_CMRD} Let $n \geq 2k$ and let $\dS$ be a
$q$-covering design $\C_2(n-k+1,k,2)$ in which there exists an
$(n-k)$-dimensional subspace $U \subset \F_2^{n-k+1}$ and a set
$\dS_1= \{ X ~:~ X \in \dS,~ X \subset U  \}$, $|\dS_1|=c$. Then
$\cC_2(n,k,2) \leq 2^{2(n-k)} +(2^k-1)|\dS| - (2^k-2)c$.
\end{theorem}
\begin{proof}
Let $\CMRD$ be an $(n,2^{2(n-k)},2(k-1),k)$ lifted MRD code, and
let $\T$ be the set of blocks in the corresponding STD$(2,k,n-k)$.

Let $\C_1$ consist of the subspaces of $\dS \setminus \dS_1$
contained in $\V_0^{(n,k)} \cup \V_x^{(n,k)}$, for each $x \in \F_2^k \setminus
\{ {\bf 0} \}$, where $\V_0^{(n,k)}$ coincides with $U$.

Let $\C_2$ consist of the subspaces of $\dS_1$ on the points of
$\V_0^{(n,k)}$.

We define
$$
\C \deff \T \cup \C_1 \cup \C_2 ~.
$$
It can be easily verified that $\C$ is a $\C_2(n,k,2)$ covering
design of size $2^{2(n-k)} +(2^k-1)|\dS| - (2^k-2)c$, and the
theorem follows.
\end{proof}

\subsection{On the density of $q$-covering designs}

The density of a $q$-covering design $\C_q(n,k,r)$, $\C$, is
defined as the ratio between $| \C |$ and the covering bound
$\left\lceil \begin{tiny} \frac{\sbinomq{n}{r}}{\sbinomq{k}{r}}
\end{tiny} \right\rceil$ as $n$ tends to infinity. Constructions
for which this ratio is equal~1 were considered in~\cite{BlEt11}.
We will consider now this ratio for two cases, $q$-covering designs $\C_2(2k,k,2)$
and $q$-covering designs $\C_2(2n+1,3,2)$.

By Theorem~\ref{thm:simpleCMRD} we have
$\cC_2(2k,k,2) \leq 2^{2k}+6 \cdot (2^k-1)$. The covering bound
in this case is equal to
$$
\left\lceil  \begin{tiny} \frac{\sbinomtwo{2k}{2}}{\sbinomtwo{k}{2}} \end{tiny} \right\rceil
=\left\lceil \frac{(2^{2k}-1)(2^{2k-1}-1)}{(2^k-1)(2^{k-1}-1)} \right\rceil =
\left\lceil \frac{2^{3k-1}+2^{2k-1}-2^k-1}{2^{k-1}-1} \right\rceil =2^{2k}+3 \cdot 2^k+5 ~.
$$
The ratio between the size of the $q$-covering design and the covering bound is
$\frac{2^{2k}+6 \cdot 2^k -6}{2^{2k}+3 \cdot 2^k+5}$ which approaches 1,
when $k$ tends to infinity. Thus, the construction in the proof
of Theorem~\ref{thm:simpleCMRD} yields a $q$-covering design which is asymptotically
optimal.

Now, we consider the value of $\cC_2(2n+1,3,2)$. The covering bound
in this case is
\begin{equation}
\label{eq:lower_3_2}
\left\lceil
\begin{tiny} \frac{\sbinomtwo{2n+1}{2}}{\sbinomtwo{3}{2}} \end{tiny}
\right\rceil =\left\lceil \frac{(2^{2n+1}-1)(2^{2n}-1)}{21} \right\rceil ~.
\end{equation}
As for the upper bound on $\cC_2(2n+1,3,2)$ we use
Theorem~\ref{thm:improved_CMRD} iteratively with the initial
condition $\cC_2(7,3,2) \leq 396$ (see Theorem~\ref{thm:b396}).
Without going into all the specific details of the proofs we can verify the
following properties concerning this upper bound.
\begin{lemma}
\label{lem:rec_pairs} Let Theorem~\ref{thm:improved_CMRD} be
applied iteratively to obtain a $q$-covering design ${\C_2(2n+1,3,2)}$,
$\dS$, starting with the $q$-covering design $\C_2(7,3,2)$ of size 396,
obtained in the proof of Theorem~\ref{thm:b396}. Then each
2-dimensional subspace $Y$ of $\V_0^{(2n+1,3)} \cup \V_x^{(2n+1,3)}$, $x \in \F_2^3
\setminus \{ {\bf 0} \}$, such that $\dim (Y \cap \V_0^{(2n+1,3)})=1$, is
contained in exactly one 3-dimensional subspaces of $\dS$.
\end{lemma}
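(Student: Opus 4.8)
The plan is to induct on $n$, mirroring the iterative application of Theorem~\ref{thm:improved_CMRD} that produces $\dS$: write $\dS^{(j)}$ for the $\C_2(2j+1,3,2)$ design obtained after $j-3$ iterations, so that $\dS^{(3)}$ is the design of Theorem~\ref{thm:b396} and $\dS^{(n)}=\dS$. The first and cleanest step is a support argument that isolates, for a fixed $Y$ with $\dim(Y\cap\V_0^{(2n+1,3)})=1$ and $Y\subset\V_0^{(2n+1,3)}\cup\V_x^{(2n+1,3)}$, which members of $\dS^{(n)}=\T\cup\C_1\cup\C_2$ can possibly contain it. No block of $\T$ can, since a lifted MRD block meets $\V_0^{(2n+1,3)}$ only in the zero vector while $Y$ has a nonzero point there; no member of $\C_2$ can, since those lie inside $\V_0^{(2n+1,3)}$ while $Y$ does not; and no member of the copy of $\dS^{(n-1)}\setminus\dS_1$ placed in a slab $\V_0^{(2n+1,3)}\cup\V_{x'}^{(2n+1,3)}$ with $x'\neq x$ can, because $Y$ has points of prefix $x$, which do not lie in that slab. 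Hence $Y$ is covered only by the copy of $\dS^{(n-1)}$ sitting in the $x$-slab, and the whole claim reduces to counting how many members of that copy contain the image $Y'$ of $Y$ under the identification $\V_0^{(2n+1,3)}\cup\V_x^{(2n+1,3)}\cong\F_2^{2n-1}$.

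For the base case $n=3$ I would upgrade the covering argument in the proof of Theorem~\ref{thm:b396} to an exactly-once statement using Lemma~\ref{lem:Ppairs}. Writing the $\V_0^{(7,3)}$-point of $Y$ as $(0,b)$ and the two $\V_{\alpha^i}^{(7,3)}$-points as $(\alpha^i,c)$ and $(\alpha^i,b+c)$, a block $\Span{\{(\alpha^i,a_1),\dots,(\alpha^i,a_4)\}}$ arising from a coset $\{a_1,\dots,a_4\}\in P_i$ with underlying spread subspace $H\in\dP_i$ contains $Y$ if and only if $b\in H$ and $c$ lies in that coset. Because $\dP_i$ is a spread, the nonzero vector $b$ lies in a unique $H\in\dP_i$, and then the coset is forced to be $c+H$; equivalently, the pair $\{c,b+c\}$ lies in exactly one element of $P_i$ by Lemma~\ref{lem:Ppairs}. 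Either phrasing gives exactly one block of $\B_{\alpha^i}$, and the support argument above rules out every other member of $\dS^{(3)}$, settling the base case.

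For the inductive step, the reduction of the first paragraph turns the level-$n$ claim into the assertion that $\dS^{(n-1)}$ covers each $2$-space $Y'$ with $\dim(Y'\cap U)=1$ exactly once, where $U$ is the $(2n-2)$-dimensional subspace of $\F_2^{2n-1}$ that Theorem~\ref{thm:improved_CMRD} identifies with $\V_0^{(2n+1,3)}$. The difficulty is that $U$ is not the reference subspace $\V_0^{(2n-1,3)}$ of the level-$(n-1)$ lemma, so the induction hypothesis does not apply verbatim. To close it I would strengthen the hypothesis to a complete covering-multiplicity profile of $\dS^{(j)}$: classify every $2$-space by $\dim(Y\cap\V_0^{(2j+1,3)})\in\{0,1,2\}$, refining the $0$-case by whether $Y$ meets each group of the ambient STD in at most one point, and assert the exact multiplicity in each class (one for the group-injective $0$-case via the strength-$2$ property of Theorem~\ref{thm:STD}, one for the single-slab $1$-case, controlled counts for the rest via Lemmas~\ref{lem:Psubspace} and~\ref{lem:P_V0}). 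I would then verify that Theorem~\ref{thm:improved_CMRD} reproduces this profile one level up, the three ingredients being the STD strength-$2$ property for the blocks $\T$, the pairwise-disjoint prefixes of the slab copies, and the base spread uniqueness already used in the $n=3$ case.

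The main obstacle is exactly this last verification, and within it two points require genuine care. First, one must pin down the specific $(2n-2)$-dimensional subspace $U$ chosen at each iteration and check that the family $\{Y':\dim(Y'\cap U)=1\}$ is precisely the union, over the slabs of $\dS^{(n-1)}$, of the classes whose multiplicity the strengthened hypothesis controls; this is where the choice of $U$ in the construction must be made explicit rather than merely assumed. Second, one must rule out any accidental double covering between the blocks $\T$ and the slab copies, i.e.\ confirm that the supports of the three pieces $\T$, $\C_1$, $\C_2$ interact only as the counting in Theorem~\ref{thm:improved_CMRD} intends. A cleaner route that sidesteps the reference-subspace mismatch is to unroll the recursion entirely and follow $Y$ down through the nested slabs to the first level at which it becomes coverable; there it reduces to a base $\B_{\alpha^i}$-configuration where Lemma~\ref{lem:Ppairs} again forces a unique covering block, while the strength-$2$ property guarantees that the intermediate lifted-MRD layers contribute nothing further.
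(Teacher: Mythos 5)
Your support argument (only the copy of $\dS^{(n-1)}$ in the $x$-slab can contain $Y$) and your base case via Lemma~\ref{lem:Ppairs} are both correct and coincide with the paper's. The genuine gap is in the inductive step. You correctly reduce the level-$n$ claim to showing that each $2$-space $Y'$ of $\F_2^{2n-1}$ with $\dim(Y'\cap U)=1$ lies in exactly one member of $\dS^{(n-1)}$, and you correctly note that $U$ is not the reference subspace $\V_0^{(2n-1,3)}$ of the level-$(n-1)$ statement. But from this you conclude that the induction hypothesis must be strengthened to a full covering-multiplicity profile of $\dS^{(j)}$, and you then explicitly defer the verification that Theorem~\ref{thm:improved_CMRD} propagates that profile, calling it ``the main obstacle.'' That deferred verification is the entire content of the inductive step, so as written the proof is incomplete; moreover the profile machinery (in particular the ``controlled counts for the rest via Lemmas~\ref{lem:Psubspace} and~\ref{lem:P_V0}'') is never needed.

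The single observation that closes the induction with the lemma \emph{exactly as stated} is the one the paper makes: the subspace $U\subset\F_2^{2n-1}$ identified with $\V_0^{(2n+1,3)}$ is the union of four prefix classes, $U=\V_0^{(2n-1,3)}\cup\V_x^{(2n-1,3)}\cup\V_y^{(2n-1,3)}\cup\V_z^{(2n-1,3)}$, where $\{{\bf 0},x,y,z\}$ is a $2$-dimensional subspace of $\F_2^3$, while each $\V_u^{(2n+1,3)}$, $u\neq{\bf 0}$, corresponds to $\bigcup_{v\notin\{{\bf 0},x,y,z\}}\V_v^{(2n-1,3)}$. Given this, the spaces $Y'$ with $\dim(Y'\cap U)=1$ fall into precisely the two classes you yourself named, with nothing left over: if the $U$-point of $Y'$ has prefix ${\bf 0}$, then $Y'\subset\V_0^{(2n-1,3)}\cup\V_u^{(2n-1,3)}$ with $\dim(Y'\cap\V_0^{(2n-1,3)})=1$, and the induction hypothesis applies verbatim; if its prefix $p$ lies in $\{x,y,z\}$, then the three nonzero points of $Y'$ carry the three distinct nonzero prefixes $p$, $u$, $u+p$ with $u\notin\{{\bf 0},x,y,z\}$, so $Y'$ avoids $\V_0^{(2n-1,3)}$, meets each group of the level-$(n-1)$ STD in at most one point, and by the fifth STD property (Theorem~\ref{thm:STD}) is contained in exactly one block of the lifted-MRD layer --- while no slab copy and no member contained in $\V_0^{(2n-1,3)}$ can contain it, since it has two distinct nonzero prefixes. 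In either case the unique covering member is not contained in $U$ (it contains a point outside $U$), so it survives the deletion of $\dS_1$ in the copy placed in the $x$-slab. This dichotomy is why the paper's proof is three sentences long; your proposal has all the right ingredients but fails to see that these two mechanisms exhaust all cases, and so substitutes an unproven strengthened invariant for a short case analysis.
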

\begin{proof}
The proof is by induction on $n$. The basis, $n=3$, is the $q$-covering design
$\C_2(7,3,2)$ of size 396,
obtained in the proof of Theorem~\ref{thm:b396}, and the
claim is immediate by Lemma~\ref{lem:Ppairs}. In the induction step
we note that $\V_0^{(2n+1,3)}$ is isomorphic to a union of $\V_0^{(2n-1,3)}$,
$\V_x^{(2n-1,3)}$, $\V_y^{(2n-1,3)}$, and $\V_z^{(2n-1,3)}$, where
$\{ {\bf 0},x,y,z \}$ is a 2-dimensional subspace of $\F_2^3$. For each
$u \in \F_2^3 \setminus \{ {\bf 0} \}$ we have that $\V_u^{(2n+1,3)}$ is isomorphic
to $\bigcup_{v \in \F_2^3 \setminus \{ {\bf 0},x,y,z \}} \V_v^{(2n-1,3)}$.
Now, the claim follows from the induction hypothesis and the fifth property
in the definition of subspace transversal design.
\end{proof}
\begin{cor}
\label{cor:rec_pairs} Let Theorem~\ref{thm:improved_CMRD} be
applied iteratively to obtain a $q$-covering design ${\C_2(2n+1,3,2)}$, $\dS$, starting
with the $q$-covering design $\C_2(7,3,2)$ of size 396,
obtained in the proof of Theorem~\ref{thm:b396}. Then the number
of 3-dimensional subspaces of $\dS$ which are contained in $\V_0^{(2n+1,3)}
\cup \V_x^{(2n+1,3)}$, $x \in \F_2^3 \setminus \{ {\bf 0} \}$, and are not
contained in $\V_0^{(2n+1,3)}$, is $\frac{2^{2n-2}(2^{2n-2}-1)}{3 \cdot 4}$.
\end{cor}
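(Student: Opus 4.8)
The goal is to count the $3$-dimensional subspaces of $\dS$ that lie inside $\V_0^{(2n+1,3)} \cup \V_x^{(2n+1,3)}$ (for a fixed nonzero $x \in \F_2^3$) but are not contained in $\V_0^{(2n+1,3)}$ itself. My plan is to count these subspaces by counting, with the correct multiplicity, the $2$-dimensional subspaces they are forced to cover, and then to invert that count.

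First I would fix a nonzero $x \in \F_2^3$ and focus on the slab $\V_0^{(2n+1,3)} \cup \V_x^{(2n+1,3)}$, which is isomorphic to $\F_2^{2n-1}$ by the remark in Section~\ref{sec:bnk2} on the structure of $\V_0^{(n,\ell)} \cup \V_x^{(n,\ell)}$. A $3$-dimensional subspace $Z$ contained in this slab but not inside $\V_0^{(2n+1,3)}$ must meet $\V_0^{(2n+1,3)}$ in a $2$-dimensional subspace (its intersection with the "index-$0$" half is a hyperplane of $Z$, forced to have dimension $2$ since $Z \not\subset \V_0^{(2n+1,3)}$). Dually, each such $Z$ contains exactly one $2$-dimensional subspace $Y$ with $\dim(Y \cap \V_0^{(2n+1,3)}) = 1$ — namely $Y$ is spanned by one point of $\V_0$ and one point of $\V_x$ — together with its translates; the key point I want to extract is the incidence between these $Z$'s and the $2$-dimensional subspaces $Y$ satisfying $\dim(Y \cap \V_0^{(2n+1,3)}) = 1$.

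The central input is Lemma~\ref{lem:rec_pairs}: every $2$-dimensional subspace $Y$ of the slab with $\dim(Y \cap \V_0^{(2n+1,3)}) = 1$ is contained in \emph{exactly one} of these $3$-dimensional subspaces. This turns the count into a double-counting (handshaking) argument. On one side I count the number $N$ of such $2$-dimensional subspaces $Y$ directly: a $2$-dimensional $Y$ with $\dim(Y \cap \V_0) = 1$ is determined by choosing the $1$-dimensional intersection line $L \subset \V_0^{(2n+1,3)}$ and then a point of $Y$ lying in $\V_x^{(2n+1,3)}$; counting these over all choices (and dividing out the appropriate overcounting) gives a closed-form expression in powers of $2$. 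On the other side, each qualifying $3$-dimensional subspace $Z$ contains a fixed number $m$ of such $2$-dimensional subspaces $Y$ — namely those $2$-subspaces of $Z$ that meet $Z \cap \V_0$ (a plane in $Z$) in a line and are not contained in it. Since Lemma~\ref{lem:rec_pairs} guarantees each $Y$ sits in exactly one $Z$, the number of $Z$'s equals $N/m$.

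The main obstacle will be getting the two local counts $N$ and $m$ exactly right, keeping careful track of the factor-of-two dimension shift between $\F_2^{2n+1}$ coordinates and the $\F_2^{2n-1}$ slab, and making sure the normalization by the group-structure of $\V_0 \cup \V_x$ is not double-counted. I expect $m$ to be the number of lines-through-the-affine-part inside a fixed $3$-space, computed from the Gaussian binomials for $\F_2^3$, and $N$ to scale like $2^{2n-2}(2^{2n-2}-1)$ up to the constant; the target value $\frac{2^{2n-2}(2^{2n-2}-1)}{3\cdot 4}$ makes $m = 12$ the natural candidate, which I would confirm by counting the $2$-subspaces of a fixed $3$-space meeting a fixed $2$-subspace of it in exactly a line. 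Once $N$ and $m$ are pinned down, the corollary follows immediately as $N/m$, with Lemma~\ref{lem:rec_pairs} supplying the crucial "exactly one" that makes the division valid.
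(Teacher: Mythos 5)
Your route is genuinely different from the paper's. The paper proves the corollary by induction on $n$, retracing the recursion of Theorem~\ref{thm:improved_CMRD}: at each step the slab $\V_0^{(2n+1,3)}\cup\V_x^{(2n+1,3)}$ picks up the $2^{2(2n-4)}$ blocks of the inner $\text{STD}(2,3,2n-4)$ plus four recursively embedded families of size $\frac{2^{2n-4}(2^{2n-4}-1)}{12}$ each, and the arithmetic closes. You instead deduce the count in one stroke from Lemma~\ref{lem:rec_pairs} by double counting incidences between the counted $3$-dimensional subspaces $Z$ and the $2$-dimensional subspaces $Y$ with $\dim(Y\cap\V_0^{(2n+1,3)})=1$. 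Once completed, your argument is arguably cleaner: it needs no re-examination of the recursion beyond Lemma~\ref{lem:rec_pairs}, and it explains where the constant $3\cdot 4$ comes from, whereas the paper's induction is self-contained bookkeeping. The local counts do close, but not with the numbers you guessed: the slab contains $N=2^{2n-3}(2^{2n-2}-1)$ qualifying $Y$'s (choose the nonzero point of $Y\cap\V_0^{(2n+1,3)}$ in $2^{2n-2}-1$ ways and a point of $\V_x^{(2n+1,3)}$ in $2^{2n-2}$ ways, then divide by $2$), and each counted $Z$ contains exactly $m=6$ of them, since $W=Z\cap\V_0^{(2n+1,3)}$ is a hyperplane of $Z$ and all $7-1=6$ two-dimensional subspaces of $Z$ other than $W$ meet $W$ in a line; then $N/m=\frac{2^{2n-2}(2^{2n-2}-1)}{12}$ as required. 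Your announced candidate $m=12$ contradicts the very count you propose to confirm it with; it only appears consistent because you also took $N$ twice too large, and the two factor-of-two slips cancel.

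The one genuine gap is the direction of the handshake you leave implicit: Lemma~\ref{lem:rec_pairs} says each qualifying $Y$ lies in exactly one element of $\dS$, \emph{not} that this element is among the $Z$'s you are counting, i.e.\ contained in $\V_0^{(2n+1,3)}\cup\V_x^{(2n+1,3)}$. Without that, the double count only yields the inequality $\#Z\le N/6$. To close it, use the structure of the design: by the constructions in Theorems~\ref{thm:b396} and~\ref{thm:improved_CMRD}, every element of $\dS$ is either a block of the subspace transversal design, or a subspace contained in some slab $\V_0^{(2n+1,3)}\cup\V_u^{(2n+1,3)}$, or a subspace contained in $\V_0^{(2n+1,3)}$. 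A block meets each group $\V_u^{(2n+1,3)}$, $u\neq{\bf 0}$, in exactly one point and meets $\V_0^{(2n+1,3)}$ only in the zero vector, so it cannot contain $Y$, which has two nonzero points in the single group $\V_x^{(2n+1,3)}$ and a nonzero point in $\V_0^{(2n+1,3)}$; a subspace of $\V_0^{(2n+1,3)}$ obviously cannot contain $Y$ either; hence the unique cover of $Y$ is a slab subspace, and since it contains points of $\V_x^{(2n+1,3)}$, its slab must be $\V_0^{(2n+1,3)}\cup\V_x^{(2n+1,3)}$. With this observation supplied, your proof is correct and complete.
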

\begin{proof}
Again, the proof of this result is by induction on $n$. The basis is $n=3$
and in the construction given in the proof of Theorem~\ref{thm:b396} the
number of such 3-dimensional subspaces is 20. Assume now that
the number of 3-dimensional subspaces of $\dS'$
(of a $q$-covering design $\C_2(2n-1,3,2)$) which are contained in $\V_0^{(2n-1,3)}
\cup \V_x^{(2n-1,3)}$, $x \in \F_2^3 \setminus \{ {\bf 0} \}$, and are not
contained in $\V_0^{(2n-1,3)}$, is $\frac{2^{2n-4}(2^{2n-4}-1)}{3 \cdot 4}$.
In the induction step the 3-dimensional subspaces of $\dS$ which are contained in $\V_0^{(2n+1,3)}
\cup \V_x^{(2n+1,3)}$, $x \in \F_2^3 \setminus \{ {\bf 0} \}$, and are not
contained in $\V_0^{(2n+1,3)}$ consist first of all the $2^{2(2n-4)}$ subspaces
of the STD$(2,3,2n-4)$ obtained from the $(2n-1,2^{2(2n-4)},4,3)$ lifted MRD code.
These subspaces are joined by the
3-dimensional subspaces of $\dS'$ which are contained in $\V_0^{(2n-1,3)}
\cup \V_u^{(2n-1,3)}$ and are not
contained in $\V_0^{(2n-1,3)}$, for each $u \in \F_2^3 \setminus \{ {\bf 0},x,y,z \}$,
where $\{ {\bf 0},x,y,z \}$ is any subspace of $\F_2^3$.
By the induction assumption the number of these subspaces is
$4 \cdot \frac{2^{2n-4}(2^{2n-4}-1)}{3 \cdot 4}$. Thus, the total number of these
subspaces is $2^{2(2n-4)} + 4 \cdot \frac{2^{2n-4}(2^{2n-4}-1)}{3 \cdot 4} =\frac{2^{2n-2}(2^{2n-2}-1)}{3 \cdot 4}$.
\end{proof}
\begin{lemma}
\label{lem:rec_V0} Let Theorem~\ref{thm:improved_CMRD} be
applied iteratively to obtain a $q$-covering design ${\C_2(2n+1,3,2)}$,
$\dS$, starting with the $q$-covering design $\C_2(7,3,2)$ of size 396,
obtained in the proof of Theorem~\ref{thm:b396}. Then the number
of 3-dimensional subspaces of $\dS$ which are contained in $\V_0^{(2n+1,3)}$ is $3
\sum_{i=0}^{n-4} \frac{2^{2i+4} (2^{2i+4}-1)}{3 \cdot 4}$.
\end{lemma}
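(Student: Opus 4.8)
The plan is to set up a one-step recursion in $n$ and unfold it. Write $\dS$ for the $q$-covering design $\C_2(2n+1,3,2)$ produced at the top level, and let $a_n$ denote the quantity to be counted, namely the number of $3$-dimensional subspaces of $\dS$ contained in $\V_0^{(2n+1,3)}$. The first and most important observation is that in the construction of Theorem~\ref{thm:improved_CMRD} (applied with $k=3$, building $\C_2(2n+1,3,2)$ from $\C_2(2n-1,3,2)$) the only subspaces of $\dS$ lying inside $\V_0^{(2n+1,3)}$ are exactly those of the part $\C_2$. Indeed, the blocks of $\T$ come from the lifted MRD code and, by Theorem~\ref{thm:STD}, meet every group $\V_x^{(2n+1,3)}$, $x\neq {\bf 0}$, so none of them is contained in $\V_0^{(2n+1,3)}$; and the subspaces of $\C_1$ are, by construction, the images of subspaces of the smaller design that are \emph{not} contained in $U$, hence after embedding they are not contained in $\V_0^{(2n+1,3)}$ either. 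Therefore $a_n=|\C_2|=|\dS_1|=c$, where $\dS_1$ is the set of $3$-subspaces of the smaller design $\C_2(2n-1,3,2)$ that are contained in the $(2n-2)$-dimensional subspace $U$.

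Second, I would evaluate $c=|\dS_1|$ by classifying the $3$-subspaces of $\C_2(2n-1,3,2)$ according to the previous-level partition. Take $U=\V_0^{(2n-1,1)}$, the hyperplane of $\F_2^{2n-1}$ consisting of the vectors whose leading coordinate is $0$; it contains $\V_0^{(2n-1,3)}$ together with exactly the three groups $\V_x^{(2n-1,3)}$ whose label $x\in\F_2^3\setminus\{{\bf 0}\}$ has leading bit $0$. The MRD blocks of the smaller design meet the four groups whose label has leading bit $1$, hence none of them lies in $U$. The $a_{n-1}$ subspaces contained in $\V_0^{(2n-1,3)}$ all lie in $U$. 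Finally, for each of the three groups $\V_x^{(2n-1,3)}\subset U$, the subspaces of the smaller design that are contained in $\V_0^{(2n-1,3)}\cup\V_x^{(2n-1,3)}$ but not in $\V_0^{(2n-1,3)}$ lie in $U$ and are precisely those counted by Corollary~\ref{cor:rec_pairs}, giving $\frac{2^{2n-4}(2^{2n-4}-1)}{3\cdot 4}$ subspaces per group. Summing the three cases gives
\[
c=a_{n-1}+3\cdot\frac{2^{2n-4}(2^{2n-4}-1)}{3\cdot 4}.
\]

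This yields the recursion $a_n=a_{n-1}+3\cdot\frac{2^{2n-4}(2^{2n-4}-1)}{3\cdot 4}$, with base case $a_3=0$: in the design $\C_2(7,3,2)$ of Theorem~\ref{thm:b396}, every subspace of $\B_{\alpha^i}$ lies in $\V_0^{(7,3)}\cup\V_{\alpha^i}^{(7,3)}$ with $\alpha^i\neq 0$, hence it protrudes into $\V_{\alpha^i}^{(7,3)}$ and is not contained in $\V_0^{(7,3)}$, while $\T$ contributes nothing. Unfolding the recursion from $a_3=0$ gives
\[
a_n=3\sum_{m=4}^{n}\frac{2^{2m-4}(2^{2m-4}-1)}{3\cdot 4},
\]
and the substitution $i=m-4$, under which $2m-4=2i+4$, turns this into $3\sum_{i=0}^{n-4}\frac{2^{2i+4}(2^{2i+4}-1)}{3\cdot 4}$, which is the asserted formula.

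The hard part will not be the arithmetic but the bookkeeping in the second paragraph: one must pin down that the $U$ used in the iteration can be taken to be the hyperplane $\V_0^{(2n-1,1)}$ that contains $\V_0^{(2n-1,3)}$ together with exactly three of the seven groups, and then verify carefully, against the definition of the subspace transversal design and the three-way split $\dS=\T\cup\C_1\cup\C_2$ at the previous level, that the MRD blocks genuinely avoid $U$ while precisely three groups' worth of $\C_1$-type subspaces fall inside it. Once this classification is justified, the factor $3$ and the recursion follow immediately, and everything else is the reindexing above.
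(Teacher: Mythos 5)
Your proposal is correct and takes essentially the same route as the paper: the paper's proof likewise identifies the subspaces of $\dS$ inside $\V_0^{(2n+1,3)}$ with those of the previous-level design contained in $U \cong \V_0^{(2n-1,3)}\cup\V_x^{(2n-1,3)}\cup\V_y^{(2n-1,3)}\cup\V_z^{(2n-1,3)}$ (with $\{{\bf 0},x,y,z\}$ a $2$-dimensional subspace of $\F_2^3$, matching your choice $U=\V_0^{(2n-1,1)}$), counts the three groups' contributions via Corollary~\ref{cor:rec_pairs}, and uses the fact that no element of the $396$-design lies in $\V_0^{(7,3)}$ as the base case. Your write-up merely makes explicit the recursion $a_n=a_{n-1}+3\cdot\frac{2^{2n-4}(2^{2n-4}-1)}{12}$ with $a_3=0$ and the reindexing that the paper leaves implicit.
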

\begin{proof}
It follows from the fact that in each iteration of
Theorem~\ref{thm:improved_CMRD}, the 3-dimensional subspaces
contained in $\V_0^{(2n+1,3)}$ of $\F_2^{2n+1}$, from the $q$-covering design
$\C_2(2n+1,3,2)$, are exactly those 3-dimensional subspaces
contained in $\V_0^{(2n-1,3)} \cup \V_x^{(2n-1,3)}
\cup \V_y^{(2n-1,3)} \cup \V_z^{(2n-1,3)}$ of $\F_2^{2n-1}$
(where $\{ {\bf 0},x,y,z \}$ is a 2-dimensional subspace of
$\F_2^3$), from the $q$-covering design
$\C_2(2n-1,3,2)$ used by the recursion. Now, the number of
3-dimensional subspaces of $\dS$ contained in $\V_0^{(2n+1,3)}$ follows from
Corollary~\ref{cor:rec_pairs} and the fact that no element of $\C_2(7,3,2)$
is contained in $\V_0^{(2n+1,3)}$.
\end{proof}
\begin{cor}
\label{cor:Cn_3_2}
$$
\cC_2(2n+1,3,2) \leq 2^{4n-4} + 7 \cdot \frac{2^{2n-2}(2^{2n-2}-1)}{12} + \sum_{i=0}^{n-4} \frac{2^{2i+4} (2^{2i+4}-1)}{4}
$$
\end{cor}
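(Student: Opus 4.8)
The plan is to obtain the bound by simply reading off the total size of the covering design $\dS$ produced by the final application of Theorem~\ref{thm:improved_CMRD}, after sorting its $3$-dimensional subspaces into three classes according to how each one sits relative to $\V_0^{(2n+1,3)}$. Recall that in the construction $\C = \T \cup \C_1 \cup \C_2$ of Theorem~\ref{thm:improved_CMRD}, applied in the last step with ambient space $\F_2^{2n+1}$ and $k=3$, the three parts are respectively: the STD blocks $\T$, each of which meets $\V_0^{(2n+1,3)}$ only in the zero vector; the straddling copies $\C_1$ placed on $\V_0^{(2n+1,3)} \cup \V_x^{(2n+1,3)}$, none of which is contained in $\V_0^{(2n+1,3)}$; and the copies $\C_2$ of subspaces lying inside $\V_0^{(2n+1,3)}$. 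Consequently each subspace $Y \in \dS$ falls into exactly one of the classes (a) $\dim(Y \cap \V_0^{(2n+1,3)})=0$, (b) $Y \subset \V_0^{(2n+1,3)} \cup \V_x^{(2n+1,3)}$ for a unique $x \neq {\bf 0}$ while $Y \not\subset \V_0^{(2n+1,3)}$, or (c) $Y \subset \V_0^{(2n+1,3)}$, and I would match these three classes with the three summands on the right-hand side.

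Carrying this out, the first term $2^{4n-4}$ is just $|\T|$: the lifted MRD code used in the last step has parameters $(2n+1,2^{2(2n-2)},4,3)$, so there are $2^{2((2n+1)-3)}=2^{4n-4}$ blocks, and these are precisely the class-(a) subspaces. For class (b), Corollary~\ref{cor:rec_pairs} gives, for each fixed $x \in \F_2^3 \setminus \{{\bf 0}\}$, exactly $\frac{2^{2n-2}(2^{2n-2}-1)}{12}$ subspaces of $\dS$ that lie in $\V_0^{(2n+1,3)} \cup \V_x^{(2n+1,3)}$ but not in $\V_0^{(2n+1,3)}$; these seven collections are pairwise disjoint, since a subspace not contained in $\V_0^{(2n+1,3)}$ can lie in at most one such slab (two distinct slabs meet precisely in $\V_0^{(2n+1,3)}$), which yields the second term $7 \cdot \frac{2^{2n-2}(2^{2n-2}-1)}{12}$. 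Finally, for class (c), Lemma~\ref{lem:rec_V0} counts the subspaces of $\dS$ contained in $\V_0^{(2n+1,3)}$ as $3\sum_{i=0}^{n-4} \frac{2^{2i+4}(2^{2i+4}-1)}{3 \cdot 4}$, which is exactly the third term $\sum_{i=0}^{n-4} \frac{2^{2i+4}(2^{2i+4}-1)}{4}$. Summing the three classes gives $|\dS|$, and since $\dS$ is a valid $q$-covering design, $\cC_2(2n+1,3,2) \leq |\dS|$ equals the claimed expression.

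The hard part, and the one point I would be careful to pin down, is exhaustiveness together with the absence of double counting: I must verify that classes (a)--(c) form a genuine partition of $\dS$, i.e.\ that the construction never produces a $3$-dimensional subspace whose projection onto the first three coordinates is $2$-dimensional (intersecting $\V_0^{(2n+1,3)}$ in dimension $2$). This is exactly what the decomposition $\C = \T \cup \C_1 \cup \C_2$ guarantees, since $\T$ contributes only class (a), $\C_1$ only class (b), and $\C_2$ only class (c); no subspace with $2$-dimensional projection is ever introduced. Once this partition is established, everything else is the routine arithmetic $3/(3\cdot 4)=1/4$ and the observation that the three counts reproduce the three displayed terms verbatim, so no combinatorial input beyond Corollary~\ref{cor:rec_pairs} and Lemma~\ref{lem:rec_V0} is needed.
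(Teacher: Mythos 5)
Your proposal is correct and is essentially the paper's own (implicit) derivation of the corollary: one sums $|\T|=2^{2(2n+1-3)}=2^{4n-4}$ from the last lifted MRD code, the seven pairwise disjoint per-slab counts of Corollary~\ref{cor:rec_pairs}, and the count of Lemma~\ref{lem:rec_V0}, after observing that $\T$, $\C_1$, $\C_2$ contribute only subspaces meeting $\V_0^{(2n+1,3)}$ in dimension $0$, $2$, $3$ respectively. One cosmetic slip: a $3$-dimensional subspace whose projection onto the first three coordinates is $2$-dimensional meets $\V_0^{(2n+1,3)}$ in dimension $1$, not $2$ (intersection dimension $2$ is precisely your class (b)), so the class to be ruled out is the intersection-dimension-$1$ class --- which your decomposition $\dS=\T\cup\C_1\cup\C_2$ does correctly exclude, leaving the argument intact.
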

Now, when $n$ tends to infinity
the ratio between the upper bound on $\cC_2(2n+1,3,2)$ given in
Corollary~\ref{cor:Cn_3_2} and the covering bound given in
(\ref{eq:lower_3_2}) is
$$
\frac{2^{4n-4} + \frac{7}{3} 2^{4n-6} + \frac{2^{4n-6}}{15}}{\frac{2^{4n+1}}{21}} = 1.05~.
$$

\section{On the Value of $\cC_2(n,k,3)$}
\label{sec:bnk3}

In this section we will present a few new upper bounds on
$\cC_2(n,k,3)$. By Theorem~\ref{thm:n_spreads}, $\cC_q
(4m+\delta,3m+\delta,3) = \frac{q^{4m}-1}{q^m-1}$ for all $m \geq
2$ and $\delta \geq 0$. This implies a sequence of exact values of
$\cC_2(n,k,3)$. For values of $k$ not covered by this theorem we
will present two specific constructions which will be also used as
a basis for another recursive construction. We will also use the
recursion implied by Theorem~\ref{thm:recursiveC}.

\begin{theorem}
\label{thm:b6897} $\cC_2(8,4,3) \leq 6897$.
\end{theorem}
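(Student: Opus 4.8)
My plan is to sort every $3$-dimensional subspace $X\subseteq\F_2^8$ according to $d:=\dim\bigl(X\cap\V_0^{(8,4)}\bigr)\in\{0,1,2,3\}$, where $\V_0^{(8,4)}$ is the $4$-dimensional space of vectors beginning with four zeros, and to cover the four classes by three families whose sizes sum to $2^{12}+2800+1=6897$. Write $\pi\colon\F_2^8\to\F_2^4$ for the projection onto the first four coordinates, so that $\ker\pi=\V_0^{(8,4)}$ and $\dim\pi(X)=3-d$.

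For $d=0$ I would take $\CMRD$ to be the lifting of a $[4\times4,12,2]$ MRD code, i.e.\ an $(8,2^{12},4,4)$ code, and let $\T$ be the blocks of the associated $\mathrm{STD}(3,4,4)$ (Theorem~\ref{thm:STD}). By the remark following that theorem every $X$ with $d=0$ meets each group in at most one point and hence lies in a block, and $|\T|=2^{12}=4096$. For $d=3$, i.e.\ $X\subseteq\V_0^{(8,4)}$, I would simply adjoin the single $4$-dimensional subspace $\V_0^{(8,4)}$, which contains all fifteen of its $3$-subspaces.

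The main work is a family $\cF$ covering $d\in\{1,2\}$. For each of the $\sbinomtwo{4}{2}=35$ two-dimensional $P\subseteq\F_2^4$ form the $6$-dimensional slab $\pi^{-1}(P)$; every $X$ with $d\ge1$ lies in such a slab. Fixing the seven spreads $\dP_0,\dots,\dP_6$ of $\cG_2(4,2)$ (transported to $\V_0^{(8,4)}\cong\F_2^4$ by a fixed identification), I would assign to slab $P$ the spread $\dP_{c(P)}$, where $c(P)$ is the index of the unique spread containing $P$. For each of the five $U\in\dP_{c(P)}$ I put into $\cF$ all the $4$-dimensional $Y\subseteq\pi^{-1}(P)$ with $\pi(Y)=P$ and $Y\cap\V_0^{(8,4)}=U$ (there are $16=|\mathrm{Hom}(P,\V_0^{(8,4)}/U)|$ of them). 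This yields $35\cdot5\cdot16=2800$ subspaces, each with $\dim\bigl(Y\cap\V_0^{(8,4)}\bigr)=2$, so $\cF$, $\T$ and $\{\V_0^{(8,4)}\}$ are pairwise disjoint and $|\C|=4096+2800+1=6897$.

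It remains to check coverage. For $d=1$ the space $X$ meets $\V_0^{(8,4)}$ in a line $\Span{w}$, which by the spread property lies in exactly one $U\in\dP_{c(P)}$; among the $16$ subspaces over that $U$ exactly the one whose defining homomorphism agrees with $X$ modulo $U$ contains $X$, so these are covered (indeed exactly once, which matches $2800\cdot12=33600$, their total number). The crux is $d=2$: now $X\cap\V_0^{(8,4)}=W$ is a plane, and $X$ can be covered only by a $Y$ with $Y\cap\V_0^{(8,4)}=W$, hence only from a slab $P\supseteq\pi(X)$ whose assigned spread $\dP_{c(P)}$ contains $W$. I expect the main obstacle to be proving that such a slab always exists: for the line $\pi(X)$ the seven slabs $P\supseteq\pi(X)$ must, under $c$, receive the seven distinct spread indices. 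This ``rainbow'' property is exactly what the choice $c(P)=$``the spread of $P$'' guarantees, since each of the seven spreads has a unique member through a given line; once it is in hand, matching the homomorphism value settles the $d=2$ case, and the remaining verifications are routine counts.
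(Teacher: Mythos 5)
Your construction is correct and essentially identical to the paper's: your family $\cF$ is exactly the paper's $\C_1$ (the $16$ sections over a slab pair $(P,U)$ with $U\in\dP_{c(P)}$ are precisely the spans of pairs of parallel-class cosets of $U$ in the spread translate $P_{c(P)}$), with the $d=0$ and $d=3$ cases handled the same way via the STD blocks of the lifted MRD code and via $\V_0^{(8,4)}$, giving the same count $4096+2800+1=6897$. Your explicit ``rainbow'' argument for $d=2$ --- that the seven planes through a line realize all seven spread indices because each spread has a unique member through a given line --- is the same fact the paper invokes through Lemma~\ref{lem:P_V0}, just stated more transparently.
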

\begin{proof}
Let $\CMRD$ be a $(8,4096,4,4)$ lifted MRD code, and let $\T$ be
the set of blocks in the corresponding STD$(3,4,4)$.

Recall the definitions of $\dP_i$ and $P_i$, $0 \leq i \leq 6$,
given in subsection~\ref{sec:dis_spread}. Consider one~$\dP_i$
and its spread translate $P_i$. We consider a partition for the
20 elements of $P_i$ into its 5 parallel classes denoted by
$\tilde{P}_{i,j}$, $1 \leq j \leq 5$. For each $\{ {\bf 0},x,y,z \} \in \dP_i$
we form a set of
%Let $\dP$ be a 2-spread
%of $\cG_2(4,2)$ and let $\tilde{P}$ be its spread translate with
%20 blocks of size 4. We partition these 20 blocks into 5 parallel
%classes, each class consists of the cosets of one 2-dimensional
%subspace of the spread $\dP$. Let $\tilde{P}_1$, $\tilde{P}_2$,
%$\tilde{P}_3$, $\tilde{P}_4$, and $\tilde{P}_5$ be these 5
%parallel classes. For any given $2-$dimensional subspace $\{ {\bf
%0 },x,y,z \}$ of $\dP$, we form the following
80 distinct
4-dimensional subspaces of $\F_2^8$:
$$
\bigl\{ \Span{ \{ (x,a_1),(x,a_2),(x,a_3),(x,a_4),(y,b_1) \} } ~:~
\{a_1,a_2,a_3,a_4\},\{b_1,b_2,b_3,b_4\} \in \tilde{P}_{i,j} ,~ 1 \leq
j \leq 5 \bigr\}
$$
Let~$\C_1$ be the union of these $5 \cdot 7=35$ sets. $\C_1$ has
$35 \cdot 80 =2800$ subspaces formed in this way. Let
$$
\C \deff \T \cup \C_1 \cup \{ \V_0^{(8,4)} \} ~.
$$
We claim that $\C$, which contains $4096+2800+1=6897$ subspaces,
is a $q$-covering design $\C_2 (8,4,3)$.

To complete the proof we have to show that each 3-dimensional
subspace of $\F_2^8$ is contained in at least one 4-dimensional
subspace of $\C$. By Theorem~\ref{thm:STD}, each 3-dimensional
subspace $X$ of $\F_2^8$ for which $\dim ( X \cap \V_0^{(8,4)} )=0$ is
contained in a subspace of $\T$.

A 3-dimensional subspace $X$ of $\F_2^8$ for which $\dim ( X \cap
\V_0^{(8,4)} )=1$ has the form
$$\bigl\{ {\bf 0} , (0,u),
(x,a_1),(x,a_2),(y,b_1),(y,b_2), (z,d_1),(z,d_2) \bigr\}~,
$$
where $(0,u) \in \V_0^{(8,4)}$, $u,x,y,z \in \F_2^4 \setminus \{ {\bf 0 } \}$, $x+y+z = {\bf 0}$,
$a_1+a_2=b_1+b_2=d_1+d_2=u$, and $d_1 = a_1 + b_1$. The
2-dimensional subspace $\{ {\bf 0} ,x,y,z \}$ of $\F_2^4$ is
contained in a unique 2-spread $\dP_i$, for some~$i$, $0 \leq i
\leq 6$. By Lemma~\ref{lem:Ppairs}, each one of the pairs $\{ a_1
,a_2 \}$, $\{ b_1 , b_2 \}$, $\{ d_1 , d_2 \}$ appears in exactly
one 4-subset $A$, $B$, $D$, respectively, of $P_i$. Since
$a_1+a_2=b_1+b_2=c_1+c_2=u$ it follows that these three 4-subsets
are cosets of the unique 2-dimensional subspace $\{ {\bf 0},u,v,w
\} \in \dP_i$. Since $d_1=a_1+b_1$ it follows that $\{ {\bf 0 },
(0,u),(0,v),(0,w) \} \cup \{ (x,a) ~:~ a \in A \} \cup \{ (y,b)
~:~ b \in B \} \cup \{ (z,d) ~:~ d \in D \}$ is a 4-dimensional
subspace of $\C_1$, which contains~$X$.

A 3-dimensional subspace $X$ of $\F_2^8$ for which $\dim ( X \cap
\V_0^{(8,4)} )=2$ has the form
$$\bigl\{ {\bf 0} , (0,u_1),(0,u_2),(0,u_3),
(x,a_1),(x,a_2),(x,a_3),(x,a_4) \bigr\}~,
$$
where $(0,u_1),(0,u_2),(0,u_3) \in \V_0^{(8,4)}$ and
$(x,a_1),(x,a_2),(x,a_3),(x,a_4) \in \V_x^{(8,4)}$. By the construction of
$\C_1$ each 2-spread $\dP_i$, $0 \leq i \leq 6$ is used in the
construction and hence by Lemma~\ref{lem:P_V0} the 2-dimensional
subspace $\{ {\bf 0},(0,u_1),(0,u_2),(0,u_3) \}$ is a subspace of
some elements from $\C_1$. There are 80 subspaces in $\C_1$ which
contain 3-dimensional subspaces contained in $\V_0^{(8,4)} \cup \V_x^{(8,4)}$ from
which four contain $X$.

Finally, each 3-dimensional subspace of $\F_2^8$ for which $\dim (
X \cap \V_0^{(8,4)} )=3$ is contained in $\V_0^{(8,4)} \in \C$.

Thus, $\C$ is a $q$-covering design $\C_2(8,4,3)$ with 6897 subspaces.
\end{proof}

%\begin{theorem}
%\label{thm:b122} $\cC_2(6,4,3) \leq 122$.
%\end{theorem}
%\begin{proof}
%We partition the nonzero points of $\F_2^{64}$ into three sets $\V_0$,
%$\V_1$, $\V_{\alpha}$, and $\V_{\beta}$, where 1, $\alpha$ and
%$\beta$ are the three nonzero elements of GF(4). Recall the
%definition of $\dP_i$ and $P_i$, $0 \leq i \leq 6$, given in
%subsection~\ref{sec:dis_spread}. Let $\dP$ be any 2-spread of
%$\cG_2(4,2)$ given in subsection~\ref{sec:dis_spread}. As in the
%proof of Theorem~\ref{thm:b6897} we construct the following 80
%subspaces
%$$
%\bigl\{ \Span{ \{
%(\alpha,a_1),(\alpha,a_2),(\alpha,a_3),(\alpha,a_4),(\beta,b_1) \}
%} ~:~ \{a_1,a_2,a_3,a_4\},\{b_1,b_2,b_3,b_4\} \in \tilde{P}_i ,~ 1
%\leq i \leq 5 \bigr\}~.
%$$
%Similarly to the proof of Theorem~\ref{thm:b6897} we can show that
%each 3-dimensional subspace $X$ for which $\dim (X \cap \V_0)=1$
%is contained in one of these 80 subspaces.
%
%To these 80 subspaces we join fourteen 4-dimensional subspaces on
%$\V_0 \cup \V_x$, for each $x \in \{ 1,\alpha,\beta \}$, where
%each of these 42 subspaces is not contained in $\V_0$. We leave to
%the reader as an exercise to find these 42 subspaces in a way that
%each 3-dimensional subspace~$X$ for which $\dim (X \cap \V_0)=2$
%is contained in one of them; and also each 3-dimensional subspace
%of $\V_0$ is contained in one of them.
%\end{proof}

Theorem~\ref{thm:improved_CMRD} can be modified to obtain bounds
on $\cC_2(n,k,3)$.
\begin{theorem}
\label{thm:improved_CMRDk_3} Let $n \geq 2k$ and let $\dS$ be a
$q$-covering design $\C_2(n-k+2,k,3)$ in which there exist an
$(n-k)$-dimensional subspaces $U_0 \subset \F_2^{n-k+2}$, and
three $(n-k+1)$-dimensional subspace $U_i \subset \F_2^{n-k+2}$,
$i=1,2,3$ such that $U_i \cap U_j = U_0$, $1 \leq i < j \leq 3$.
Assume further that
$\dS_i= \{ X ~:~ X \in \dS,~ X \subset U_i \}$,
$|\dS_i|=c_i$, $i=0,1,2,3$, and $c_1 \leq c_2 \leq c_3$. Then,
$\cC_2(n,k,3) \leq 2^{3(n-k)}
+\sbinomtwo{k}{2}(|\dS|-c_1-c_2-c_3+2c_0) +(2^k-1)(c_1-c_0) +\cC_2
(n-k,k,3)$.
\end{theorem}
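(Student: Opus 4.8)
The plan is to follow the same architecture as the proof of Theorem~\ref{thm:b6897} and the $r=2$ recursion of Theorem~\ref{thm:improved_CMRD}, now adapted to $r=3$. We build the final $\C_2(n,k,3)$ as a disjoint union of three families. First, take a lifted MRD code $\CMRD$ with parameters $(n,2^{3(n-k)},2(k-2),k)$, whose corresponding $\text{STD}(3,k,n-k)$ (via Theorem~\ref{thm:STD}, with $\delta=k-2$ so that $k-\delta+1=3$) has block set $\T$; this contributes the $2^{3(n-k)}$ term and, by the fifth STD property, covers every $3$-dimensional $X$ with $\dim(X\cap\V_0^{(n,k)})=0$. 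The remaining two families are assembled by embedding copies of the given $\C_2(n-k+2,k,3)$ design $\dS$ into the strips $\V_0^{(n,k)}\cup\V_x^{(n,k)}$ and finer pieces, exactly mirroring how $\C_1$ and $\C_2$ were formed in Theorem~\ref{thm:improved_CMRD}, but now the four nested spaces $U_0\subset U_1,U_2,U_3$ with $U_i\cap U_j=U_0$ track how a $3$-subspace distributes across the coordinate strips when $\dim(X\cap\V_0^{(n,k)})$ equals $1$, $2$, or $3$.

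First I would set up the identification: $\V_0^{(n,k)}$ plays the role of the distinguished $(n-k)$-subspace (matching $U_0$ of dimension $n-k$), and the three subspaces $U_1,U_2,U_3$ of dimension $n-k+1$ correspond to the three strips $\V_0^{(n,k)}\cup\V_x^{(n,k)}$ for $x$ running over the nonzero vectors of a fixed $2$-dimensional subspace $\{\mathbf{0},x,y,z\}$ of $\F_2^k$, intersecting pairwise in $U_0=\V_0^{(n,k)}$. I would then carefully separate the $3$-subspaces $X$ of $\F_2^n$ by $\dim(X\cap\V_0^{(n,k)})\in\{0,1,2,3\}$ and show each case is covered: the $0$ case by $\T$; the $1$ case by embedded blocks of $\dS$ living in a single strip $\V_0^{(n,k)}\cup\V_x^{(n,k)}$ (these are the $(n-k+1)$-dimensional copies, giving the $(2^k-1)(c_1-c_0)$ count after removing the $c_0$ pieces already inside $\V_0^{(n,k)}$); the $2$ case by pieces spread across the three strips indexed by a $2$-subspace of $\F_2^k$ (there are $\sbinomtwo{k}{2}$ such $2$-subspaces, giving the $\sbinomtwo{k}{2}$ factor, with $|\dS|-c_1-c_2-c_3+2c_0$ counting the subspaces of $\dS$ genuinely crossing all three $U_i$ by inclusion--exclusion); and the $3$ case, where $X\subset\V_0^{(n,k)}$, by the recursively supplied $\C_2(n-k,k,3)$ design placed on the points of $\V_0^{(n,k)}$, which accounts for the $\cC_2(n-k,k,3)$ term.

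The size bookkeeping then assembles as $2^{3(n-k)}+\sbinomtwo{k}{2}(|\dS|-c_1-c_2-c_3+2c_0)+(2^k-1)(c_1-c_0)+\cC_2(n-k,k,3)$, and one verifies it by summing the contributions of the four families while being careful that the overlaps counted in the nested $U_i$'s (via the inclusion--exclusion $-c_1-c_2-c_3+2c_0$) are not double-counted. I expect the main obstacle to be the $\dim(X\cap\V_0^{(n,k)})=2$ case: one must check that when a $3$-subspace meets $\V_0^{(n,k)}$ in a plane and projects onto exactly the nonzero cosets of a $2$-dimensional subspace $\{\mathbf{0},x,y,z\}$ of $\F_2^k$, the correct combination of $\dS$-blocks across the three strips $U_1,U_2,U_3$ genuinely spans $X$ and that the parity/linearity conditions (the analog of $d_1=a_1+b_1$ and $a_1+a_2=b_1+b_2=d_1+d_2=u$ from the proof of Theorem~\ref{thm:b6897}) are consistent. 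This is exactly the step where the hypothesis $U_i\cap U_j=U_0$ and the assumption $c_1\leq c_2\leq c_3$ are used, ensuring the minimal-count strip is the one whose blocks get duplicated across strips and that the inclusion--exclusion correction is valid; the strength-$3$ property of the STD handles the purely generic case, so the entire difficulty concentrates in reconciling the recursive embedding with the three-strip decomposition.
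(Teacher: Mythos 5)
Your high-level plan is the same as the paper's: take the blocks $\T$ of the STD$(3,k,n-k)$ obtained from an $(n,2^{3(n-k)},2(k-2),k)$ lifted MRD code, embed a copy of $\dS$ on $\V_0^{(n,k)}\cup\V_x^{(n,k)}\cup\V_y^{(n,k)}\cup\V_z^{(n,k)}$ for each $2$-dimensional subspace $\{ {\bf 0},x,y,z \}$ of $\F_2^k$ (keeping only the blocks of $\dS\setminus(\dS_1\cup\dS_2\cup\dS_3)$, counted by inclusion--exclusion as $|\dS|-c_1-c_2-c_3+2c_0$ since $\dS_i\cap\dS_j=\dS_0$), replicate $\dS_1\setminus\dS_0$ on each single strip $\V_0^{(n,k)}\cup\V_x^{(n,k)}$, and place a $\C_2(n-k,k,3)$ on $\V_0^{(n,k)}$. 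But your coverage analysis is inverted, and this is a genuine error, not a relabeling. If $X$ is $3$-dimensional with $\dim(X\cap\V_0^{(n,k)})=1$, then the projection of $X$ onto the first $k$ coordinates has dimension $3-1=2$, so $X$ has two nonzero points in each of \emph{three} strips and cannot be contained in any $k$-dimensional block lying inside a single strip $\V_0^{(n,k)}\cup\V_x^{(n,k)}$; these are exactly the subspaces that require the three-strip copies (and they are covered there, because a block of $\dS_1\cup\dS_2\cup\dS_3$ is contained in one $U_i$ and hence too narrow to contain such an $X$). Dually, if $\dim(X\cap\V_0^{(n,k)})=2$ the projection is $1$-dimensional and $X$ lies in a single strip. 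The configuration you single out as the main obstacle --- $X$ meeting $\V_0^{(n,k)}$ in a plane while projecting onto all nonzero points of a $2$-dimensional subspace of $\F_2^k$ --- is geometrically impossible, since $3=\dim(X\cap\V_0^{(n,k)})+\dim(\text{projection of } X)$ would then read $3=2+2$; the linearity conditions you import from Theorem~\ref{thm:b6897} belong to the $\dim(X\cap\V_0^{(n,k)})=1$ case, not this one.

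Because of this swap you never confront the step where the hypotheses actually bite, namely the single-strip case. A $3$-subspace $X\subset\V_0^{(n,k)}\cup\V_x^{(n,k)}$ with $X\not\subset\V_0^{(n,k)}$ may be covered in $\dS$ only by blocks lying inside some $U_i$, and the construction discards $\dS_2\setminus\dS_0$ and $\dS_3\setminus\dS_0$ entirely. The paper therefore chooses the matchings of $(U_1,U_2,U_3)$ to the three strips of each copy so that every strip $\V_x^{(n,k)}$, $x\neq{\bf 0}$, coincides with $U_1$ in at least one copy. With that choice $X$ corresponds to a $3$-subspace of $U_1$, and any block of $\dS$ covering it is either outside $\dS_1\cup\dS_2\cup\dS_3$ (present in the three-strip family) or in $\dS_1\setminus\dS_0$ (present in the replicated family of size $(2^k-1)(c_1-c_0)$); it cannot lie in $\dS_2\cup\dS_3$, since $U_2\cap U_1=U_3\cap U_1=U_0$ would force $X\subset\V_0^{(n,k)}$. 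This matching condition is missing from your proposal, and without it precisely those single-strip subspaces covered in $\dS$ only through $\dS_2$ or $\dS_3$ would remain uncovered; the assumption $c_1\leq c_2\leq c_3$ plays only the bookkeeping role of making the replicated class the cheapest one, not the structural role you assign it.
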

\begin{proof}
Let $\CMRD$ be an $(n,2^{3(n-k)},2(k-2),k)$ lifted MRD code, and
let $\T$ be the set of blocks in the corresponding STD$(3,k,n-k)$.

Let $\C_1$ consist of the subspaces of $\dS \setminus (\dS_1 \cup
\dS_2 \cup \dS_3)$ on the points of $\V_0^{(n,k)} \cup \V_x^{(n,k)} \cup \V_y^{(n,k)} \cup
\V_z^{(n,k)}$, for each 2-dimensional subspace $\{ {\bf 0} ,x,y,z \}$ of
$\F_2^k$, where $\V_0^{(n,k)}$ coincides with $U_0$, $\V_x^{(n,k)}$ with $U_1$,
$\V_y^{(n,k)}$ with $U_2$, and $\V_z^{(n,k)}$ with $U_3$. The choice, in which
$\V_x^{(n,k)}$, $\V_y^{(n,k)}$, and $\V_z^{(n,k)}$ are matched with $U_1$, $U_2$,
and $U_3$, is made in a
way that for each $x \in \F_2^k \setminus \{ {\bf 0} \}$, $\V_x^{(n,k)}$
coincides at least once with $U_1$.

Let $\C_2$ consist of the subspaces of $\dS_1 \setminus \dS_0$ on
the points of $\V_0^{(n,k)} \cup \V_x^{(n,k)}$, for each $x \in \F_2^k \setminus
\{ {\bf 0} \}$, where $V_x^{(n,k)}$ coincides with $U_1$.

Let $\C_3$ consist of a $q$-covering design $\C_2(n-k,k,3)$ on the points of $\V_0^{(n,k)}$.

We define
$$
\C \deff \T \cup \C_1 \cup \C_2 \cup \C_3 ~.
$$
It can be easily verified that $\C$ is a $q$-covering design $\C_2(n,k,3)$
with $2^{3(n-k)} +\sbinomtwo{k}{2}(|\dS|-c_1-c_2-c_3+2c_0)
+(2^k-1)(c_1-c_0) +\cC_2(n-k,k,3)$ distinct $k$-dimensional subspaces
and the theorem follows.
\end{proof}
\begin{remark}
Theorem~\ref{thm:improved_CMRDk_3} can be slightly improved for
specific cases by omitting for example the subspaces of $\C_3$, or
taking much less subspaces than required by $\cC_2(n-k,k,3)$. An example
will be given in Section~\ref{sec:tables}. The messy general proof
is omitted as it is of less interest
\end{remark}

\vspace{0.75cm}

\section{Tables}
\label{sec:tables}

This section is devoted for tables with the known lower and upper
bounds on $\cC_2(n,k,r)$ for $1 \leq r \leq k <n$, $5 \leq n \leq
10$. At the end of the section we will demonstrate how the upper bound
on $\cC_2(10,5,3)$ was obtained.

\begin{center}
\hspace{4.00ex}
\begin{tabular}{|c||c@{\hspace{0.50ex}}%
                    c@{\hspace{0.50ex}}%
                    c@{\hspace{0.50ex}}%
                    c|}
\multicolumn{5}{c}{\sf \hspace*{-10ex}Bounds on
$\cC_2(5,k,r)$\hspace*{-10ex}}
\vspace{0.5cm}
\\[0.75ex]
\hline \raisebox{-1.25ex}{\large$r$} &
\multicolumn{4}{c|}{\large$k$\Strut{2.50ex}{0ex}}
\\[-1.00ex]
& 4 & 3 & 2 & 1
\\
\hline\hline $4$ & $^q31^q$\Strut{2.50ex}{0ex} &
\multicolumn{3}{|c|}{}
\\
\cline{3-3} $3$ & $^q15^q$ & $^a155^a$\Strut{2.25ex}{0ex} &
\multicolumn{2}{|c|}{}
\\
\cline{4-4} $2$ & $^q7^q$ & $^e27^m$ & $^a155^a$\Strut{2.25ex}{0ex}
&\multicolumn{1}{|c|}{}
\\
\cline{5-5} $1$ & $^p3^p$ & $^p5^p$ & $^p11^p$ &
$^p31^p$\Strut{2.25ex}{0ex}
\\
\hline
\end{tabular}
\hspace{3.00ex}
\begin{tabular}{|c||c@{\hspace{1.00ex}}%
                    c@{\hspace{0.50ex}}%
                    c@{\hspace{0.50ex}}%
                    c@{\hspace{0.50ex}}%
                    c|}
\multicolumn{6}{c}{\sf Bounds on $\cC_2(6,k,r)$}
\vspace{0.5cm}
\\[0.75ex]
\hline \raisebox{-1.25ex}{\large$r$} &
\multicolumn{5}{c|}{\large$k$\hspace*{3.00ex}\Strut{2.50ex}{0ex}}
\\[-1.00ex]
& 5 &  4 & 3 & 2 & 1
\\
\hline\hline $5$ & $^q63^q$\Strut{2.50ex}{0ex} &
\multicolumn{4}{|c|}{}
\\
\cline{3-3} $4$ & $^q31^q$ & $^a651^a$\Strut{2.25ex}{0ex} &
\multicolumn{3}{|c|}{}
\\
\cline{4-4} $3$ & $^q15^q$ & $^s114{-}122^m$ &
$^a1395^a$\Strut{2.25ex}{0ex} &\multicolumn{2}{|c|}{}
\\
\cline{5-5} $2$ & $^q7^q$ & $^s21^n$ & $^s99{-}106^c$ &
$^a651^a$\Strut{2.25ex}{0ex} & \multicolumn{1}{|c|}{}
\\
\cline{6-6} $1$ & $^p3^p$ & $^p5^p$ & $^p9^p$ & $^p21^p$ &
$^p63^p$\Strut{2.25ex}{0ex}
\\
\hline
\end{tabular}
\end{center}

\vspace{1.0cm}
%------------------------------------------------------------------------------

\begin{center}
{\sf Bounds on $\cC_2(7,k,r)$}
\vspace{0.5cm}
\\[1.00ex]
\begin{tabular}{|c||c@{\hspace{1.00ex}}%
                    c@{\hspace{0.50ex}}%
                    c@{\hspace{0.50ex}}%
                    c@{\hspace{0.50ex}}%
                    c@{\hspace{1.00ex}}%
                    c|}
\hline \raisebox{-1.25ex}{\large$r$} &
\multicolumn{6}{c|}{\hspace{3.00ex}\large$k$\Strut{2.50ex}{0ex}}
\\[-1.00ex]
& 6 & 5 &  4 & 3 & 2 & 1
\\
\hline\hline $6$ & $^q127^q$\Strut{2.50ex}{0ex} &
\multicolumn{5}{|c|}{}
\\
\cline{3-3} $5$ & $^q63^q$ & $^a2667^a$\Strut{2.25ex}{0ex} &
\multicolumn{4}{|c|}{}
\\
\cline{4-4} $4$ & $^q31^q$ & $^s468{-}519^r$ &
$^a11811^a$\Strut{2.25ex}{0ex} &\multicolumn{3}{|c|}{}
\\
\cline{5-5} $3$ & $^q15^q$ & $^e99^r$ & $^s839{-}970^r$ &
$^a11811^a$\Strut{2.25ex}{0ex} & \multicolumn{2}{|c|}{}
\\
\cline{6-6} $2$ & $^q7^q$ & $^s21^n$ & $^s77{-}93^r$ &
$^s381{-}396^f$ & $^a2667^a$\Strut{2.25ex}{0ex}
&\multicolumn{1}{|c|}{}
\\
\cline{7-7} $1$ & $^p3^p$ & $^p5^p$ & $^p9^p$ & $^p19^p$ & $^p43^p$ &
$^p127^p$\Strut{2.25ex}{0ex}
\\
\hline
\end{tabular}
\end{center}

\vspace{1.0cm}
%------------------------------------------------------------------------------

\begin{center}
{\sf Bounds on $\cC_2(8,k,r)$}
\vspace{0.5cm}
\\[1.00ex]
\begin{tabular}{|c||c@{\hspace{1.00ex}}%
                    c@{\hspace{0.50ex}}%
                    c@{\hspace{0.50ex}}%
                    c@{\hspace{0.50ex}}%
                    c@{\hspace{0.50ex}}%
                    c@{\hspace{1.00ex}}%
                    c|}
\hline \raisebox{-1.25ex}{\large$r$} &
\multicolumn{7}{c|}{\large$k$\hspace{4.00ex}\Strut{2.50ex}{0ex}}
\\[-1.00ex]
& 7 & 6 & 5 &  4 & 3 & 2 & 1
\\
\hline\hline $7$ & $^q255^q$\Strut{2.50ex}{0ex} &
\multicolumn{6}{|c|}{}
\\
\cline{3-3} $6$ & $^q127^q$ & $^a10795^a$\Strut{2.25ex}{0ex} &
\multicolumn{5}{|c|}{}
\\
\cline{4-4} $5$ & $^q63^q$ & $^s1895{-}2139^r$ &
$^a97155^a$\Strut{2.25ex}{0ex} &\multicolumn{4}{|c|}{}
\\
\cline{5-5} $4$ & $^q31^q$ & $^s401{-}426^m$ & $^s6902{-}8279^r$ &
$^a200787^a$\Strut{2.25ex}{0ex} & \multicolumn{3}{|c|}{}
\\
\cline{6-6} $3$ & $^q15^q$ & $^s85^n$ & $^s634{-}843^r$ &
$^s6477{-}6897^g$ & $^a97155^a$\Strut{2.25ex}{0ex}
&\multicolumn{2}{|c|}{}
\\
\cline{7-7} $2$ & $^q7^q$ & $^s21^n$ & $^s75{-}93^\ell$ &
$^s323{-}346^c$ & $^s1567{-}1658^i$ & $^a10795^a$ \Strut{2.25ex}{0ex} &
\multicolumn{1}{|c|}{}
\\
\cline{8-8} $1$ & $^p3^p$ & $^p5^p$ & $^p9^p$ & $^p17^p$ & $^p37^p$ & $^p85^p$
& $^p255^p$\Strut{2.25ex}{0ex}
\\
\hline
\end{tabular}
\end{center}

\vspace{1.00cm}
%------------------------------------------------------------------------------

\begin{center}
{\sf Bounds on $\cC_2(9,k,r)$}
\vspace{0.5cm}
\\[1.00ex]
\begin{tabular}{|c||c@{\hspace{1.00ex}}%
                    c@{\hspace{0.50ex}}%
                    c@{\hspace{0.50ex}}%
                    c@{\hspace{0.50ex}}%
                    c@{\hspace{0.50ex}}%
                    c@{\hspace{0.50ex}}%
                    c@{\hspace{1.00ex}}%
                    c|}
\hline \raisebox{-1.25ex}{\large$r$} &
\multicolumn{8}{c|}{\large$k$\hspace{4.00ex}\Strut{2.50ex}{0ex}}
\\[-1.00ex]
& 8 & 7 & 6 & 5 &  4 & 3 & 2 & 1
\\
\hline\hline $8$ & $^q511^q$\Strut{2.50ex}{0ex} &
\multicolumn{7}{|c|}{}
\\
\cline{3-3} $7$ & $^q255^q$ & $^a43435^a$ \Strut{2.25ex}{0ex} &
\multicolumn{6}{|c|}{}
\\
\cline{4-4} $6$ & $^q127^q$ & $^s7625{-}8683^r$ & $^a788035^a$
\Strut{2.25ex}{0ex} &\multicolumn{5}{|c|}{}
\\
\cline{5-5} $5$ & $^q63^q$ & $^s1614{-}1767^r$ & $^s55983{-}68371^r$ &
$^a3309747^a$\Strut{2.25ex}{0ex} &\multicolumn{4}{|c|}{}
\\
\cline{6-6} $4$ & $^q31^q$ & $^e371^r$ & $^s5143{-}7170^r$ &
$^d108574{-}118631^r$ & $^a3309747^a$\Strut{2.25ex}{0ex}
&\multicolumn{3}{|c|}{}
\\
\cline{7-7} $3$ & $^q15^q$ & $^s85^n$ & $^s609{-}829^r$ &
$^s5325{-}6379^r$ & $^s53383{-}59953^r$ & $^a788035^a$\Strut{2.25ex}{0ex}
& \multicolumn{2}{|c|}{}
\\
\cline{8-8} $2$ & $^q7^q$ & $^s21^n$ & $^s73^n$ & $^s281{-}346^\ell$ &
$^s1261{-}1325^i$ & $^s6205{-}6508^i$ & $^a43435^a$\Strut{2.25ex}{0ex}
&\multicolumn{1}{|c|}{}
\\
\cline{9-9} $1$ & $^p3^p$ & $^p5^p$ & $^p9^p$ & $^p17^p$ & $^p35^p$ & $^p73^p$
& $^p171^p$ & $^p511^p$\Strut{2.25ex}{0ex}
\\
\hline
\end{tabular}
\end{center}

\vspace{1.50ex}
%------------------------------------------------------------------------------

\begin{center}
{\sf Bounds on $\cC_2(10,k,r)$}
\vspace{0.5cm}
\\[1.00ex]
\begin{scriptsize}
\begin{tabular}{|c||c@{\hspace{1.00ex}}%
                    c@{\hspace{0.50ex}}%
                    c@{\hspace{0.50ex}}%
                    c@{\hspace{0.50ex}}%
                    c@{\hspace{0.50ex}}%
                    c@{\hspace{0.50ex}}%
                    c@{\hspace{0.50ex}}%
                    c@{\hspace{1.00ex}}%
                    c|}
\hline \raisebox{-1.25ex}{\large$r$} &
\multicolumn{9}{c|}{\large$k$\hspace{4.00ex}\Strut{2.50ex}{0ex}}
\\[-1.00ex]
& 9 & 8 & 7 & 6 & 5 &  4 & 3 & 2 & 1
\\
\hline\hline $9$ & $^q1023^q$\Strut{2.50ex}{0ex} &
\multicolumn{8}{|c|}{}
\\
\cline{3-3} $8$ & $^q511^q$ & $^a174251^a$ \Strut{2.25ex}{0ex} &
\multicolumn{7}{|c|}{}
\\
\cline{4-4} $7$ & $^q255^q$ & $^s30590{-}34987^r$ & $^a6347715^a$
\Strut{2.25ex}{0ex} &\multicolumn{6}{|c|}{}
\\
\cline{5-5} $6$ & $^q127^q$ & $^s6475{-}7195^r$ &
$^d451631{-}555651^r$ & $^a53743987^a$\Strut{2.25ex}{0ex}
&\multicolumn{5}{|c|}{}
\\
\cline{6-6} $5$ & $^q63^q$ & $^s1489{-}1546^m$ & $^s41428{-}59127^r$ &
$^d1777360{-}1966467^r$ & $^a109221651^a$\Strut{2.25ex}{0ex}
&\multicolumn{4}{|c|}{}
\\
\cline{7-7} $4$ & $^q31^q$ & $^s341^n$ & $^s4906{-}7003^r$ &
$^s86468{-}109234^r$ & $^s1761639{-}1937127^r$ &
$^a53743987^a$\Strut{2.25ex}{0ex} & \multicolumn{3}{|c|}{}
\\
\cline{8-8} $3$ & $^q15^q$ & $^s85^n$ & $^s589{-}669^r$ &
$^s4563{-}6365^r$ & $^s41613{-}45230^i$ & $^s423181{-}476465^r$ &
$^a6347715^a$\Strut{2.25ex}{0ex} &\multicolumn{2}{|c|}{}
\\
\cline{9-9} $2$ & $^q7^q$ & $^s21^n$ & $^s73^n$ & $^s277{-}345^r$ &
$^s1155{-}1210^c$ & $^s4979{-}5197^i$ & $^s24991{-}26298^i$ &
$^a174251^a$\Strut{2.25ex}{0ex} &\multicolumn{1}{|c|}{}
\\
\cline{10-10} $1$ & $^p3^p$ & $^p5^p$ & $^p9^p$ & $^p17^p$ & $^p33^p$ &
$^p69^p$ & $^p147^p$ & $^p341^p$ & $^p1023^p$\Strut{2.25ex}{0ex}
\\
\hline
\end{tabular}
\end{scriptsize}
\end{center}

\noindent
\begin{itemize}
\item $a$ - all $\begin{tiny} \sbinomtwo{n}{k} \end{tiny}$ $k$-dimensional subspaces of $\F_2^n$.

%\item $b$ - Theorem~\ref{thm:b27}.

\item $c$ - simple construction from $\CMRD$ (Theorem~\ref{thm:simpleCMRD}).

\item $d$ - de Caen Theorem (Theorem~\ref{thm:bound_k,k-1})

%\item $e$ - Theorem~\ref{thm:b122}.

\item $e$ - a bound on the size of set of lines by Eisfeld and Metsch (Theorem~\ref{thm:EiMe}).

\item $f$ - Theorem~\ref{thm:b396}.

\item $g$ - Theorem~\ref{thm:b6897}.

\item $i$ - improved construction from $\CMRD$
(Theorems~\ref{thm:improved_CMRD} and~\ref{thm:improved_CMRDk_3}).

\item $\ell$ - lengthening theorem (Theorem~\ref{thm:lengthening})

\item $m$ - a set of lines in projective geometry by Metsch (Theorem~\ref{thm:Met03a}).

\item $n$ - $q$-covering design based on normal spreads (Theorem~\ref{thm:n_spreads}).

\item $p$ - covering of single points (Theorem~\ref{thm:optc2}).

\item $q$ - Theorem~\ref{thm:Turan=1}.

\item $r$ - recursive construction (Theorem~\ref{thm:recursiveC}).

\item $s$ - Sch\"{o}nheim bound (Theorem~\ref{thm:schonheim}).
\end{itemize}

\begin{theorem}
$\cC_2(10,5,3) \leq 45230$
\end{theorem}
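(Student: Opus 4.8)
The plan is to apply Theorem~\ref{thm:improved_CMRDk_3} with $n=10$ and $k=5$, so that $n-k=5$, $n-k+2=7$, $2^{3(n-k)}=2^{15}=32768$, $\sbinomtwo{5}{2}=155$, and $2^{k}-1=31$. For the base $q$-covering design $\dS$ I would take the $\C_2(7,5,3)$ of size $|\dS|=99$ given by Construction~\ref{con:recursiveC}, built from $\dS_1=\C_2(6,5,3)$ of size $15$ (Theorem~\ref{thm:Turan=1}) and $\dS_2=\C_2(6,4,2)$ of size $21$ (Theorem~\ref{thm:n_spreads}). Writing $\F_2^7=\{(v,\alpha)\,:\,v\in\F_2^6,\ \alpha\in\F_2\}$ and $W\deff\F_2^6\times\{0\}$, Construction~\ref{con:recursiveC} lifts $\dS_1$ to $15$ hyperplanes of $W$ and lifts $\dS_2$ to the $4\cdot 21=84$ blocks of the form $(X\times\{0\})\oplus\Span{\{(x,1)\}}$, none of which lies in $W$.

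For the flag $U_0\subset U_1,U_2,U_3$ required by Theorem~\ref{thm:improved_CMRDk_3} I would take $U_3=W$; since exactly the $15$ lifted $\dS_1$-blocks lie in $W$, this gives $c_3=15$. I then choose $U_0\subset W$ to be any of the $\sbinomtwo{6}{5}-15=63-15=48$ hyperplanes of $W$ that are not blocks of $\dS_1$, so $U_0\notin\dS$ and $c_0=0$, and let $U_1,U_2$ be the two remaining hyperplanes of $\F_2^7$ through $U_0$; a direct computation gives $U_i\cap U_j=U_0$. The key count is $c_1=c_2=2$. Indeed, a lifted $\dS_2$-block $(X\times\{0\})\oplus\Span{\{(x,1)\}}$ lies in $U_1$ (resp.\ $U_2$) exactly when $X\subset U_0$ and $(x,1)\in U_1$ (resp.\ $U_2$), and since $U_0/X$ is one-dimensional, precisely two of the four lifts of a fixed $X\subset U_0$ fall in $U_1$ and two in $U_2$. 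Hence $c_1=c_2=2m$, where $m$ is the number of $4$-dimensional blocks of $\dS_2$ contained in the hyperplane $U_0$ of $\F_2^6$. As $X\subset U_0$ iff $U_0^{\perp}\subset X^{\perp}$, and the $21$ perp-lines $\{X^{\perp}:X\in\dS_2\}$ form a line-spread of PG$(5,2)$ (the normal spread underlying Theorem~\ref{thm:n_spreads}), the point $U_0^{\perp}$ lies on exactly one of them; thus $m=1$ for every admissible $U_0$ and $c_1=c_2=2$.

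With these parameters, and using the improvement recorded in the remark after Theorem~\ref{thm:improved_CMRDk_3} to delete the summand $\cC_2(n-k,k,3)=\cC_2(5,5,3)=1$, the bound becomes $2^{15}+155\,(99-2-2-15)+31\cdot 2=32768+12400+62=45230$, as claimed. The main obstacle is precisely the justification for omitting $\C_3$: after deletion there is no block equal to $\V_0^{(10,5)}=U_0$, so I must show that every $3$-dimensional subspace of $\V_0^{(10,5)}$ is already covered by $\T\cup\C_1\cup\C_2$. Here $\T$ contributes nothing (its blocks avoid $\V_0^{(10,5)}$), the two blocks of $\C_2$ meet $U_0$ in $4$-dimensional subspaces, and each of the $80$ ``diagonal'' base blocks (those not contained in any $U_i$) meets $U_0$ in a subspace of dimension at least $3$; the delicate point --- exactly the ``messy'' case the remark declines to treat in general --- is to verify from the explicit structure of the size-$99$ covering that these intersections together exhaust all $\sbinomtwo{5}{3}=155$ three-dimensional subspaces of $U_0$, leaving none uncovered. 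By comparison, the counting of $c_0,c_1,c_2,c_3$ via the spread duality is routine once the lifted structure of $\dS_1$ and $\dS_2$ is unwound.
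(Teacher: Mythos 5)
Your route is exactly the paper's route: apply Theorem~\ref{thm:improved_CMRDk_3} with $n=10$, $k=5$ to the size-$99$ covering design $\C_2(7,5,3)$ produced by Construction~\ref{con:recursiveC} from a $\C_2(6,5,3)$ of size $15$ and the $\C_2(6,4,2)$ of size $21$ coming from the orthogonal complement of a normal $2$-spread, arriving at the same parameters $c_0=0$, $c_1=c_2=2$, $c_3=15$ and the same arithmetic. Where the paper merely asserts the distribution of blocks over $U_0$ and the three hyperplanes through it, your derivation of $c_1=c_2=2$ via the duality $X\subset U_0 \Leftrightarrow U_0^{\perp}\subset X^{\perp}$, together with the fact that the $21$ perp-lines form a line-spread of PG$(5,2)$ (so each admissible $U_0$ contains exactly one block of $\dS_2$, split two-and-two among its four lifts), is correct and is a welcome elaboration. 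Up to this point you have a complete proof of $\cC_2(10,5,3)\leq 45231$.

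But the theorem claims $45230$, and the one unit saved by deleting $\C_3=\{\V_0^{(10,5)}\}$ is precisely the step you flag as delicate and do not carry out; as written, your argument does not reach the stated bound. The missing idea --- which the paper supplies by its pointer to the construction of Theorem~\ref{thm:simpleCMRD} --- is to exploit the freedom in embedding the $62$ copies of the two $U_1$-blocks: view $\F_2^{10}$ as $\F_{2^5}\times\F_{2^5}$ with $\alpha$ primitive in $\F_{2^5}$, fix the two copies placed on $\V_0^{(10,5)}\cup\V_1^{(10,5)}$, and place on $\V_0^{(10,5)}\cup\V_{\alpha^j}^{(10,5)}$ their images under $(u,v)\mapsto(\alpha^j u,\alpha^j v)$. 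If $Z$ is such a block and $Z_0=Z\cap\V_0^{(10,5)}$ is its $4$-dimensional trace, then since $\gcd(2^5-1,2^4-1)=1$, Lemma~\ref{lem:dis_subspaces} gives that $\{\alpha^j Z_0 ~:~ 0\leq j\leq 30\}$ consists of $31$ distinct $4$-dimensional subspaces, i.e., \emph{all} hyperplanes of $\V_0^{(10,5)}$. Every $3$-dimensional subspace of $\V_0^{(10,5)}$ lies in some hyperplane of it, hence is covered by $\C_2$ alone; in particular the case analysis of the $80$ diagonal blocks that you anticipated is unnecessary, the block $\V_0^{(10,5)}$ is redundant, and the count becomes $2^{15}+155\cdot 80+62=45230$. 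Without this (or an equivalent) choice-of-embedding argument, the deletion of the summand $\cC_2(5,5,3)=1$ is unjustified and your proof falls exactly one short of the claim.
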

\begin{proof}
To apply Theorem~\ref{thm:improved_CMRDk_3} we should consider the
structure of an appropriate $q$-covering design $\C_2(7,5,3)$. We start with
a $q$-covering design $\C_2(6,4,2)$ of size 21. This $q$-covering design
is obtained from the orthogonal complement of a normal 2-spread
in $\cG_2 (6,2)$, where the {\it orthogonal complement} $\dS^\perp$, of a set
$\dS$ of subspaces from $\F_q^n$, is defined by
$$
\dS^\perp \deff \bigl\{ X^\perp ~:~ X \in \dS \bigr\}~,
$$
where $X^\perp$ is the dual subspace of $X$. Hence, we can take a $q$-covering design
$\C_2(6,4,2)$ on $\F_2^6 = \V_0^{(6,1)} \cup \V_1^{(6,1)}$ of size
21 with exactly one subspace contained in $\V_0^{(6,1)}$. By applying
Construction~\ref{con:recursiveC} with this $q$-covering design $\C_2(6,4,2)$ and a
$q$-covering design $\C_2(6,5,3)$ of size 15, we
obtain a $q$-covering design $\C_2(7,5,3)$ on $\F_2^7 = \V_0^{(7,2)} \cup \V_1^{(7,2)}
\cup \V_2^{(7,2)} \cup \V_3^{(7,2)}$. In this design of size 99, there
is no element contained in $\V_0^{(7,2)}$, 80 elements meet
$\V_0^{(7,2)}$, $\V_1^{(7,2)}$, $\V_2^{(7,2)}$, and $\V_3^{(7,2)}$, each in at least one
vector, two elements lie in $\V_0^{(7,2)} \cup \V_2^{(7,2)}$,
two elements lie in $\V_0^{(7,2)} \cup \V_3^{(7,2)}$, and 15 elements
lie in $\V_0^{(7,2)} \cup \V_1^{(7,2)}$,

Applying the construction in Theorem~\ref{thm:improved_CMRDk_3}
with this $q$-covering design $\C_2(7,5,3)$,
yields a $q$-covering design $\C_2(10,5,3)$ of size $2^{15} +
\sbinomtwo{5}{2} \cdot 80 + 31 \cdot 2+ \cC_2 (5,5,3) =45231$.
But, the unique 5-dimensional subspace of the $q$-covering design $\C_2(5,5,3)$
can be omitted if we will make our choice of the other
subspaces in a similar way to the construction in
Theorem~\ref{thm:simpleCMRD}.
\end{proof}

\section{Conclusion and Problems for Future Research}
\label{sec:conclusion}

The minimal size, $\cC_q(n,k,r)$, of a $q$-covering design
$\C_q(n,k,r)$ was considered. A few techniques to obtain bounds on
$\cC_q(n,k,r)$ were discussed. Some of the results were given for
general $q$, while other were given only for $q=2$, even so some
of them can be definitely generalized. Tables with the lower and
upper bounds on $\cC_2(n,k,r)$, $1 \leq r \leq k <10$, were given.

There are many problems for future research and we will mention a
few. We will also propose some construction methods for which we
are unable to determine, at this point, how much successful they
would be. We would like to emphasize that a computer search which
will improve some of the specific results seems to be quite
difficult at this point of time.
\begin{enumerate}
\item We have given only new upper bounds on $\cC_q(n,k,r)$ with
related constructions. To reduce the gap between the lower and the
upper bounds also lower bounds should be considered. We can
suggest two methods to obtain new lower bounds. The first one is
to find analog theorems to the ones known for covering designs on
sets (as done in Theorem~\ref{thm:bound_k,k-1}~\cite{EtVa11a}).
A second method is to examine
the number of subspaces for each type, when sets of the form $\V_x^{(n,\ell)}$,
are considered. Inequalities related to the way that
$r$-dimensional subspaces are covered by $k$-dimensional subspaces
should be developed and solved to minimize the number of
$k$-dimensional subspaces in the $q$-covering design
$\C_q(n,k,r)$.

\item Theorems~\ref{thm:improved_CMRD}
and~\ref{thm:improved_CMRDk_3} can be further generalized to
obtain upper bounds $\cC_2(n,k,r)$, $r>3$. How the constructions
related to these theorems can be applied to obtain the best
bounds? How can a general bound for all $r \geq 2$ be
formulated? Can we obtain better $q$-covering designs, to be applied for the
recursion in the proofs of these theorems?

\item Usually, the upper bound implied by
Theorem~\ref{thm:lengthening} can be improved by a better
construction. In some cases we couldn't produce a better bound or
we only obtained a minor improvement. Can a more significant
improvement be made in these cases ($\cC_2(8,5,2)$,
$\cC_2(9,5,2)$, $\cC_2(10,6,2)$).

\item The constructions in which a subspace transversal design
based on lifting of an MRD code seems to be very powerful in
obtaining good bounds on $\cC_2(n,k,r)$. Unfortunately, a subspace
transversal design exists if and only if $k \leq n-k$. Therefore,
we ask the following question. Given $k > n-k$ and the related
sets $\V_0^{(n,k)}$, $\V_1^{(n,k)} , \ldots$, what is the
size of the smallest set $\dS$ of $k$-dimensional subspaces such
that for each $X \in \dS$, $X$ meets each $\V_y^{(n,k)}$, $y \neq 0$,
in exactly one point; and each $r$-dimensional subspace of
$\F_q^n$ which meets each $\V_y^{(n,k)}$ in at most one point, is contained
in at least one $k$-dimensional subspace of $\dS$?

\item Let $\{ X_1$, $X_2$,...,$X_t \}$ be a set of $k$-dimensional
subspaces of $\F_q^n$ such that $Y = X_i \cap X_j$ for each $1
\leq i < j \leq t$ and for each one-dimensional subspace $Z \in
\cG_q(n,1)$ there exists at least one $i$, $1 \leq i \leq t$, such
that $Z \subset X_i$. Let $X_i = Y \cup U_i$, such that $Y \cap
U_i = \varnothing$. What is the smallest set $\dS$ of
$k$-dimensional subspaces such that for each $X \in \dS$, $X$
meets each $U_i$, in at most one point; and each
two-dimensional subspace of~$\F_q^n$, disjoint from $Y$, which
meets each $U_i$ in at most one point is contained in at least one
$k$-dimensional subspace of $\dS$? This is somewhat a
generalization of a subspace transversal design. Related question
for general $r$-dimensional subspaces instead of two-dimensional
subspaces is also of interest.

\item What is the best upper bound on the density of a
$q$-covering design $\cC_q(n,k,r)$ which can be obtained for any
given $k$ and $r$ such that $1 < r < k < n-1$, when $n$ tends to
infinity? What is the best bound for a given $r$ and all $k >r$?
What is the best bound for a given $k$ and all $r < k$?
\end{enumerate}

\begin{center}
{\bf Acknowledgments}
\end{center}
In the paper we consider a concept which is of interest in coding theory,
design, $q$-analogs, and projective geometry. This made some difficulties
in choosing and using the right notations, such that all communities will be
satisfied. The author is grateful to one of the anonymous reviewers which has done
a remarkable job by suggesting the right notations and pointing on places
which might be confusing. Moreover, he has spotted some errors and
suggested how to amend the
papers in the right places, and most of his suggestions were accepted.
The other anonymous reviewers should also be acknowledged for providing some important comments.
The author also thanks Ameerah N. Chowdhury for providing the paper
of Klaus Metsch~\cite{Met03} and paying his attention to some of
the covering problems which were considered in terms
of projective geometries. He also thanks Natalia Silberstein for
commenting on an early version of this paper.

%%%%%%%%%%%%%%%%%%%%%%%%%%%%%%%%%%%%%%%%%%%%%%%%%%%%%%%%%%%%%%%%%%%%%%
%%%%%%%%%%%%%%%%%%%%%%%%%%%%%%%%%%%%%%%%%%%%%%%%%%%%%%%%%%%%%%%%%%%%%%
%%%%%%%%%%%%%%%%%%%%%%%%%%%%%%%%%%%%%%%%%%%%%%%%%%%%%%%%%%%%%%%%%%%%%%
%%%%%%%%%%%%%%%%%%%%%%%%%%%%%%%%%%%%%%%%%%%%%%%%%%%%%%%%%%%%%%%%%%%%%%

\end{document}